\documentclass[a4paper, 11pt]{amsart}

\usepackage{amsmath, amssymb, amsthm, mathtools}
\usepackage[colorlinks=true, linkcolor=blue]{hyperref}

\usepackage[margin=1in]{geometry}
\usepackage{quiver}
\usepackage{booktabs}
\usepackage{array}
\usepackage{float}

\newtheorem{theorem}{Theorem}[section]
\newtheorem{lemma}[theorem]{Lemma}
\newtheorem{proposition}[theorem]{Proposition}
\newtheorem{corollary}[theorem]{Corollary}

\newtheorem*{maintheoremA}{Main Theorem A}
\newtheorem*{maintheoremB}{Main Theorem B}
\newtheorem*{maintheoremC}{Main Theorem C}

\theoremstyle{definition}
\newtheorem{definition}[theorem]{Definition}

\newtheorem{remark}[theorem]{Remark}
\newtheorem{question}[theorem]{Question}

\newcommand{\NN}{\mathbb{N}}

\newcommand{\QQ}{\mathbb{Q}}

\newcommand{\PP}{\mathbb{P}}

\newcommand{\NC}{\textnormal{(NC)}}
\newcommand{\QC}{\textnormal{(QC)}}

\newcommand{\tr}{\operatorname{tr}}

\newcommand{\embdim}{\operatorname{edim}}
\newcommand{\embcodim}{\operatorname{codim}}
\newcommand{\Spec}{\operatorname{Spec}}
\newcommand{\height}{\operatorname{ht}}

\newcommand{\rank}{\operatorname{rank}}
\newcommand{\depth}{\operatorname{depth}}
\newcommand{\Hom}{\operatorname{Hom}}
\newcommand{\Ass}{\operatorname{Ass}}
\newcommand{\Tor}{\operatorname{Tor}}
\newcommand{\Ext}{\operatorname{Ext}}
\newcommand{\projdim}{\operatorname{proj.dim}}

\renewcommand{\hat}[1]{\widehat{#1}}

\title[Failure of the Nagata Criterion and Recovery of Openness]{Complete Intersections of Bounded Codimension: Failure of the Nagata Criterion and Recovery of Openness}
\author{Rirai Ikeda}
\address{Department of Mathematics, School of Science, Science Tokyo, 2-12-1 Ookayama, Meguro-ku, Tokyo 152-8551, Japan}
\email{ikeda.r.cfbf@m.isct.ac.jp}

\begin{document}

\begin{abstract}
    Let $\mathsf{CI}_{\leq c}$ denote the property of being a complete intersection of codimension at most $c$. Although the regular, complete intersection, Gorenstein, and Cohen--Macaulay properties satisfy the Nagata criterion \NC{}, we prove that $\mathsf{CI}_{\leq c}$ does not satisfy \NC{} for any $c \geq 1$. We first construct a counterexample for hypersurfaces and then obtain counterexamples for arbitrary $c$ using square-zero extensions. We also introduce three conditions for a property of Noetherian local rings and show that, under stability with respect to localization and reduction by suitable regular sequences, \NC{} is characterized by a lifting property across normally flat nilpotent thickenings. Nevertheless, we recover the expected openness result: the $\mathsf{CI}_{\leq c}$-locus is open for every Noetherian ring satisfying $\mathsf{Reg}$-Q0, and hence for every quasi-excellent ring. Finally, for every quasi-compact excellent scheme $X$, we prove that the subset 
    \begin{align*}
        \left\{x \in X \mid \embdim(\mathcal{O}_{X,x}) - \dim(\mathcal{O}_{X,x}) \leq n \right\}
    \end{align*}
    is constructible for every $n \in \NN$, although the function $x \mapsto \embdim(\mathcal{O}_{X,x}) - \dim(\mathcal{O}_{X,x})$ is not upper semicontinuous in general.
\end{abstract}

\subjclass[2020]{Primary 13H10; Secondary 13F40, 13D03}
\keywords{complete intersection, codimension, Nagata criterion, openness of loci, excellent ring}
\maketitle

\tableofcontents

\section{Introduction}\label{sec:introduction}

Throughout this paper, all rings are assumed to be commutative with identity and $\NN = \{0, 1, 2, \ldots\}$.

Let $\PP$ be a property of Noetherian local rings. For a Noetherian ring $A$, one considers the $\PP$-locus
\begin{align*}
    \PP(A) \coloneqq \{ \mathfrak{p} \in \Spec(A) \mid A_{\mathfrak{p}} \text{ satisfies } \PP \}.
\end{align*}
A fundamental problem in commutative algebra and algebraic geometry is to determine when $\PP(A)$ is Zariski open in $\Spec(A)$. A basic principle underlying many classical openness results is the following Nagata criterion, \NC{}.

\begin{enumerate}
    \item[(NC)] Let $A$ be a Noetherian ring. If $\PP(A/\mathfrak{p})$ contains a nonempty open subset of $\Spec(A/\mathfrak{p})$ for every $\mathfrak{p} \in \Spec(A)$, then $\PP(A)$ is an open subset of $\Spec(A)$.
\end{enumerate}

We write $\mathsf{Reg}$, $\mathsf{CI}$, $\mathsf{Gor}$, and $\mathsf{CM}$ for the properties of being regular, complete intersection, Gorenstein, and Cohen--Macaulay, respectively. Nagata proved that $\mathsf{Reg}$ satisfies \NC{} in \cite{Nag59}. The cases of $\mathsf{CI}$ and $\mathsf{Gor}$ were established by Greco and Marinari in \cite{GM78}, and the case of $\mathsf{CM}$ was established by Massaza and Valabrega in \cite{MV77}. Thus, \NC{} holds for all four of these standard classes of singularities. Several other properties are also known to satisfy \NC{}; see Definition \ref{def:Nagata_criterion} and Remark \ref{rem:NC} for the precise definition and further examples. The Nagata criterion has also been extended to loci of finitely generated modules; see Kimura \cite{Kim23}.

In this paper, we study the property $\mathsf{CI}_{\leq c}$ of being a complete intersection of codimension at most $c$, where $c \in \NN$; see Definition \ref{def:cicodim}. In particular, $\mathsf{CI}_{\leq 0}=\mathsf{Reg}$, and $\mathsf{CI}_{\leq 1}=\mathsf{HS}$ is the property of being a hypersurface. These properties fit into the following hierarchy:
\begin{align*}
    \mathsf{Reg} = \mathsf{CI}_{\leq 0} \Longrightarrow \mathsf{HS} = \mathsf{CI}_{\leq 1} \Longrightarrow \mathsf{CI}_{\leq 2} \Longrightarrow \cdots \Longrightarrow \mathsf{CI}_{\leq c} \Longrightarrow \cdots \Longrightarrow \mathsf{CI} \Longrightarrow \mathsf{Gor} \Longrightarrow \mathsf{CM}.
\end{align*}

The properties $\mathsf{CI}_{\leq c}$ are natural subclasses of complete intersections which are closer to regularity as the bound $c$ becomes smaller. Moreover, they retain many of the standard properties enjoyed by the familiar classes above. For example, they are stable under localization and polynomial extensions and have good behavior with respect to reduction by suitable regular sequences. We establish these and related facts in the first subsection of Section \ref{sec:ci-bounded}. Since $\mathsf{Reg}$, $\mathsf{CI}$, $\mathsf{Gor}$, and $\mathsf{CM}$ all satisfy \NC{}, these observations give substantial reason to expect the same conclusion for $\mathsf{CI}_{\leq c}$.

Contrary to this expectation, $\mathsf{CI}_{\leq c}$ does not satisfy \NC{} for any positive value of $c$. This is our first main result.

\begin{maintheoremA}[{Theorem \ref{th:counterexampleNC}}]
    For every $c \geq 1$, the property $\mathsf{CI}_{\leq c}$ does not satisfy \NC{}.
\end{maintheoremA}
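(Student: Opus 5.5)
The plan is to first produce a counterexample for $c=1$, that is, for $\mathsf{HS}$, and then transport it to arbitrary $c$ by a thickening which raises the codimension by exactly $c-1$ and leaves the quotients $A/\mathfrak{p}$ unchanged.

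\emph{Constraints on a hypersurface counterexample.} These constraints come from earlier facts, and they rule out the naive attempts.
\begin{enumerate}
    \item Since $\mathsf{Reg}\Rightarrow\mathsf{HS}$, the hypothesis of \NC{} holds as soon as every $A/\mathfrak{p}$ has a nonempty open regular locus. So it suffices to take $A$ with all its quotient domains (for instance) excellent, or at least with nonempty open regular loci.
    \item Since $\mathsf{Reg}$ satisfies \NC{}, $\mathsf{Reg}(A)$ is then open. Hence the bad set $Z=\Spec(A)\setminus\mathsf{HS}(A)$ sits inside a closed set $V(I)$.
    \item Since $\mathsf{HS}$ is stable under localization, $Z$ is stable under specialization. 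It is therefore non-closed exactly when it has infinitely many maximal (generic) points.
    \item If $Z$ had only closed points in a curve, the hypothesis of \NC{} would fail for the one-dimensional domain. Regular one-dimensional bases (Dedekind domains) together with square-zero extensions by finite modules cannot make $\mu(M_{\mathfrak{q}})$ jump at infinitely many points.
\end{enumerate}
So I will aim for a non-quasi-excellent Noetherian local domain $(A,\mathfrak{m})$ of dimension at least $3$ with the following structure. Its singular locus is $V(P)$ with $\dim A/P=2$ and $A/P$ excellent, and $A_P$ is a hypersurface. However, there are infinitely many height-one primes $\mathfrak{q}_i$ of $A/P$ at which $\embdim(A_{\mathfrak{q}_i})-\dim(A_{\mathfrak{q}_i})\geq 2$. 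Then $Z\supseteq\{\mathfrak{q}_i\}\cup\{\mathfrak{m}\}$, and $Z$ does not contain $P$. Consequently $Z$ is not closed, since its closure would contain the generic point $P$ of $V(P)$. Meanwhile every $A/\mathfrak{p}$ has an open regular locus: for $\mathfrak{p}\supseteq P$ because $A/P$ is excellent, and for the remaining $\mathfrak{p}$ by a direct check of the construction.

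\emph{The main obstacle: building $A$.} The hard step is producing such an $A$, in which the embedding dimension jumps along infinitely many curves inside a fixed surface while all quotient domains stay nice. I would first try a Rotthaus/Nishimura-type construction. One starts with a regular local ring $R$, for example a localization of $k[x,y,z,w]$, and chooses a power series $\sigma$ in a completion that is transcendental over $R$. One then defines $A$ as an intersection $\hat{R}\cap L$ with $L$ a suitable field, arranged so that $A$ is Noetherian and $A/P\cong$ an excellent surface ring. The point of the arrangement is that the generic fibre of the completion is not regular along infinitely many height-one primes of $A/P$. The verification then splits into three parts:
\begin{enumerate}
    \item Noetherianity, via the standard criterion for such intersections.
    \item Computation of $\embdim(A_{\mathfrak{q}_i})$ using that $A_{\mathfrak{q}_i}\to\hat{R}_{\mathfrak{q}_i}$ is flat with controlled closed fibre.
    \item Regularity of $A_P$ together with the fact that $A_P$ is a hypersurface, plus excellence of $A/P$.
\end{enumerate}
If that fails, an alternative is to realise $A$ as a non-reduced ring, namely a non-trivial thickening of an excellent regular ring $R$ by an ideal $N$ with $N^2=0$. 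Here one lets the conormal data $N\otimes\kappa(\mathfrak{q})$ jump along infinitely many height-one primes. This requires $N$ to be a finitely generated but non-flat module over a non-excellent intermediate ring, and not over $R$ itself.

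\emph{Passing to arbitrary $c$.} Given such an $A$ for $c=1$, set
\begin{align*}
    B=A[t_1,\dots,t_{c-1}]/(t_1^2,\dots,t_{c-1}^2).
\end{align*}
Then $\Spec(B)=\Spec(A)$ and $B/\mathfrak{p}B_{\mathrm{red}}\cong A/\mathfrak{p}$, so the hypothesis of \NC{} transfers verbatim. For each prime, $B_{\mathfrak{p}}$ is obtained from $A_{\mathfrak{p}}$ by adjoining $c-1$ variables and modding out the regular sequence $t_i^2$, whose elements lie in the square of the maximal ideal. Hence $\embdim$ increases by $c-1$ and $\dim$ is unchanged, and $B_{\mathfrak{p}}$ is a complete intersection if and only if $A_{\mathfrak{p}}$ is. By the stability results for $\mathsf{CI}_{\leq c}$ under polynomial extension and reduction by suitable regular sequences from Section \ref{sec:ci-bounded}, this gives $\mathsf{CI}_{\leq c}(B)=\mathsf{HS}(A)$. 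That set is not open, which proves the statement for every $c\geq1$.
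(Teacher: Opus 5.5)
\textbf{The gap is the case $c=1$.} Your passage from $c=1$ to general $c$ via $A[t_1,\dots,t_{c-1}]/(t_1^2,\dots,t_{c-1}^2)$ is fine; it is essentially the paper's induction with Lemma \ref{lem:squarezero}. For $c=1$, however, you only describe a program for constructing the ring, and the ring you aim for cannot exist.

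In constraints (1)--(2), and again in your target (``every $A/\mathfrak{p}$ has an open regular locus''), you require $A$ to satisfy $\mathsf{Reg}$-Q0. Your alternative, a square-zero thickening $A$ with $A/N\cong R$ excellent, is also $\mathsf{Reg}$-Q0, since every $A/\mathfrak{p}$ is then a quotient of $R$. But Theorem \ref{th:openness_hs} shows that $\mathsf{HS}(A)=\mathsf{CI}_{\leq 1}(A)$ is open for every $\mathsf{Reg}$-Q0 ring. So no such $A$ is a counterexample.

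Concretely, in your design $\mathsf{HS}(A)$ would be open and contain $P$, so it would contain a nonempty open subset of $V(P)$. Any infinite set of height-one primes of the two-dimensional domain $A/P$ is dense in $V(P)$, so some $\mathfrak{q}_i$ would lie in $\mathsf{HS}(A)$, a contradiction. The mechanism behind the theorem is this. Cut down by a linearly independent system of parameters of $A_P$, which makes $P$ nilpotent with free graded pieces near $P$. Then $\mu(PA_Q)\leq \embcodim(A_P)$ for $Q$ near $P$. Lifting a regular system of parameters of the generically regular ring $A_Q/PA_Q$ (Lemma \ref{lem:normallyflat}, Corollary \ref{cor:hs_regular_deform}) then gives $\embcodim(A_Q)\leq\embcodim(A_P)$. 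This is exactly the jump along infinitely many $\mathfrak{q}_i$ that you are trying to build. No Rotthaus/Nishimura-type verification can rescue the plan, because any counterexample must fail $\mathsf{Reg}$-Q0 at some prime.

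\textbf{The missing idea.} The \NC{} hypothesis for $\mathsf{HS}$ at a prime $\mathfrak{p}$ only asks that $\mathsf{HS}(A/\mathfrak{p})$ contain a nonempty open set. This holds automatically when $A/\mathfrak{p}$ is a hypersurface at every prime, even if its regular locus is not open. The paper uses Nishimura's hypersurface local domain $B$ from \cite{Nis12}: $\mathsf{Reg}(B)$ contains no nonempty open subset, while $B/\mathfrak{q}$ is essentially of finite type over a field for every $\mathfrak{q}\neq 0$. Set $A=B[t]/(t^2)$. Every $A/\mathfrak{p}$ is some $B/\mathfrak{q}$, which is either $B$ (hypersurface everywhere) or excellent, so $A$ is $\mathsf{HS}$-Q0. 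On the other hand, $\mathsf{HS}(A)=\mathsf{Reg}(B)$ by Lemma \ref{lem:squarezero}, and this set is not open. Your constraint (2), which forced $\mathsf{Reg}(A)$ to be open, excluded precisely this kind of example from the start.
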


We first construct, using an example of Nishimura \cite{Nis12}, a Noetherian ring satisfying $\mathsf{HS}$-Q0 whose hypersurface locus is not open. This gives a counterexample to \NC{} for $\mathsf{HS}=\mathsf{CI}_{\leq 1}$. We then obtain counterexamples for arbitrary $c\geq 1$ by using square-zero extensions of the form
\begin{align*}
    A[t]/(t^2).
\end{align*}
Such an extension leaves the underlying spectrum unchanged and increases the codimension of every local complete intersection by one.

To analyze this failure systematically, we introduce three conditions, denoted by (A-1), (A-2), and (A-3), for a property $\PP$ of Noetherian local rings; see Definition \ref{def:conditions-for-NC}. These conditions concern stability under localization, reduction by a suitable regular sequence, and lifting across a normally flat nilpotent thickening, respectively. We prove that (A-1), (A-2), and (A-3) together imply \NC{}, while \NC{} always implies (A-3); see Proposition \ref{prop:conditions-imply-NC}. The property $\mathsf{CI}_{\leq c}$ satisfies (A-1) and (A-2), but does not satisfy (A-3). Thus, its failure to satisfy \NC{} is accounted for by the failure of the lifting condition across normally flat nilpotent thickenings.

The failure of \NC{} does not by itself imply that the $\mathsf{CI}_{\leq c}$-locus behaves poorly for the rings that commonly arise in algebra and geometry. Indeed, the expected openness result remains valid under a condition considerably weaker than excellence. We say that a Noetherian ring $A$ satisfies $\mathsf{Reg}$-Q0 if $\mathsf{Reg}(A/\mathfrak{p})$ contains a nonempty open subset of $\Spec(A/\mathfrak{p})$ for every prime ideal $\mathfrak{p}$ of $A$; see Definition \ref{def:openness}.

Our second main result establishes the openness of the $\mathsf{CI}_{\leq c}$-locus for every ring satisfying this condition.

\begin{maintheoremB}[{Theorem \ref{th:openness_hs}}]
    Let $A$ be a Noetherian ring which satisfies $\mathsf{Reg}$\textup{-Q0}. Then, $\mathsf{CI}_{\leq c}(A)$ is an open subset of $\Spec(A)$ for every $c \in \NN$.
\end{maintheoremB}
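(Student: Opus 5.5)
The plan is to split the $\mathsf{CI}_{\leq c}$-locus into two conditions: being a complete intersection, and having small embedding codimension $\embdim - \dim$. Each is then handled by a separate argument.

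\emph{Step 1: the complete intersection locus is open.} If $A$ satisfies $\mathsf{Reg}$-Q0, then $\mathsf{CI}(A/\mathfrak{p}) \supseteq \mathsf{Reg}(A/\mathfrak{p})$ contains a nonempty open subset for every $\mathfrak{p}$. Since $\mathsf{CI}$ satisfies \NC{} by Greco--Marinari, $\mathsf{CI}(A)$ is open. Moreover, for a complete intersection $A_\mathfrak{p}$ the codimension equals $\embdim(A_\mathfrak{p}) - \dim(A_\mathfrak{p})$ (Definition \ref{def:cicodim}). Hence
\begin{align*}
    \mathsf{CI}_{\leq c}(A) = \mathsf{CI}(A) \cap \{\mathfrak{p} \mid \embdim(A_\mathfrak{p}) - \dim(A_\mathfrak{p}) \leq c\}.
\end{align*}

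\emph{Step 2: reduction to a topological criterion.} I use the standard fact that a subset $U \subseteq \Spec(A)$ of a Noetherian spectrum is open if and only if (i) $U$ is stable under generization, and (ii) for every $\mathfrak{p} \in U$, the set $U \cap V(\mathfrak{p})$ contains a nonempty open subset of $V(\mathfrak{p})$.

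Condition (i) for $U = \mathsf{CI}_{\leq c}(A)$ is the localization stability of $\mathsf{CI}_{\leq c}$ established in the first subsection of Section \ref{sec:ci-bounded} (condition (A-1)). So everything reduces to (ii).

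\emph{Step 3: condition (ii), the main step.} Fix $\mathfrak{p} \in \mathsf{CI}_{\leq c}(A)$. I will choose $f \notin \mathfrak{p}$ so that for every $\mathfrak{q} \in V(\mathfrak{p}) \cap D(f)$ the following hold.
\begin{itemize}
    \item[(a)] $A_\mathfrak{q}/\mathfrak{p}A_\mathfrak{q}$ is regular. This uses $\mathsf{Reg}$-Q0 for $A/\mathfrak{p}$.
    \item[(b)] $\mathfrak{p}/\mathfrak{p}^2$ and $\mathfrak{p}^2/\mathfrak{p}^3$ are free over $(A/\mathfrak{p})_f$. This uses generic freeness over the domain $A/\mathfrak{p}$, and in fact gives a normally flat $\operatorname{gr}_\mathfrak{p}(A)_f$, since $\operatorname{gr}_\mathfrak{p}(A)$ is a finitely generated $A/\mathfrak{p}$-algebra.
    \item[(c)] $\mathfrak{q} \in \mathsf{CI}(A)$. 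This uses Step 1, since $\mathfrak{p} \in \mathsf{CI}(A)$, which is open.
\end{itemize}

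Now write $\mathfrak{q}A_\mathfrak{q} = \mathfrak{p}A_\mathfrak{q} + (x_1, \dots, x_d)$, where $d = \dim A_\mathfrak{q}/\mathfrak{p}A_\mathfrak{q}$ and the $x_i$ form a regular system of parameters modulo $\mathfrak{p}$. I want to show that
\begin{align*}
    \embdim(A_\mathfrak{q}) \leq \embdim(A_\mathfrak{p}) + d.
\end{align*}
From the exact sequence $\mathfrak{p}/(\mathfrak{p} \cap \mathfrak{q}^2) \to \mathfrak{q}/\mathfrak{q}^2 \to \bar{\mathfrak{q}}/\bar{\mathfrak{q}}^2 \to 0$, it suffices to bound the image of $\mathfrak{p}$ in $\mathfrak{q}/\mathfrak{q}^2$ by $\embdim(A_\mathfrak{p})$.

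For this I would use normal flatness. It should give $\mathfrak{p} \cap \mathfrak{q}^2 \supseteq \mathfrak{p}^2 + \mathfrak{p}\mathfrak{q}$ together with the following control: the image of $\mathfrak{p}$ is generated by lifts of a minimal generating set of $\mathfrak{p}A_\mathfrak{p}$ modulo $\mathfrak{p}^2A_\mathfrak{p}$. The point is that freeness of $\mathfrak{p}/\mathfrak{p}^2$ makes its rank at $\mathfrak{q}$ equal to its rank at $\mathfrak{p}$, namely $\embdim(A_\mathfrak{p})$ (the latter because $A_\mathfrak{p}$ has residue field $\kappa(\mathfrak{p})$ and $\embdim(A_\mathfrak{p}) = \dim_{\kappa(\mathfrak{p})} \mathfrak{p}A_\mathfrak{p}/\mathfrak{p}^2A_\mathfrak{p}$).

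Combining this with $\dim A_\mathfrak{q} \geq \dim A_\mathfrak{p} + d$ gives
\begin{align*}
    \embdim(A_\mathfrak{q}) - \dim(A_\mathfrak{q}) \leq \embdim(A_\mathfrak{p}) - \dim(A_\mathfrak{p}) \leq c.
\end{align*}
Together with (c), this shows $\mathfrak{q} \in \mathsf{CI}_{\leq c}(A)$.

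The main obstacle is the embedding dimension estimate in Step 3. It requires carefully showing that, after inverting $f$, a minimal generating set of $\mathfrak{p}A_\mathfrak{p}$ together with $x_1, \dots, x_d$ generates $\mathfrak{q}A_\mathfrak{q}$. My first attempt would be to choose $f$ so that $\mathfrak{p}_f$ itself is generated by $\embdim(A_\mathfrak{p})$ elements, using that $\mathfrak{p}/\mathfrak{p}^2$ is free of that rank and applying Nakayama's lemma along $V(\mathfrak{p})$. That would make the bound immediate, and normal flatness would then not even be needed.
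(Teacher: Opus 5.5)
Your argument is correct, and it takes a genuinely more direct route than the paper.

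\textbf{The paper's route.} It reruns the proof of Proposition \ref{prop:conditions-imply-NC}. It first cuts down by a linearly independent system of parameters of $A_P$ (condition (A-2)), which reduces to a nilpotent prime with $P^i/P^{i+1}$ free. It then lifts a regular system of parameters of $A_Q/PA_Q$ to an $A_Q$-regular sequence via Lemma \ref{lem:normallyflat}, and finally climbs back up with Corollary \ref{cor:hs_regular_deform}~(1).

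\textbf{Your route.} You skip all of this and count generators at an arbitrary prime $\mathfrak{p}$, using $\mu(\mathfrak{q}A_\mathfrak{q})\le \embdim(A_\mathfrak{p})+\dim (A/\mathfrak{p})_{\mathfrak{q}}$ together with $\height\mathfrak{q}\ge\height\mathfrak{p}+\height(\mathfrak{q}/\mathfrak{p})$.

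\textbf{The step you flag as the main obstacle is immediate.} Both of your variants work:
\begin{enumerate}
    \item Since $\mathfrak{p}^2+\mathfrak{p}\mathfrak{q}\subseteq\mathfrak{q}^2$ trivially, the image of $\mathfrak{p}A_\mathfrak{q}$ in $\mathfrak{q}A_\mathfrak{q}/\mathfrak{q}^2A_\mathfrak{q}$ is a quotient of $(\mathfrak{p}/\mathfrak{p}^2)\otimes_A\kappa(\mathfrak{q})$.
    \item Alternatively, apply Nakayama over the local ring $A_\mathfrak{q}$ to $\mathfrak{p}A_\mathfrak{q}=(z_1,\dots,z_e)A_\mathfrak{q}+\mathfrak{p}^2A_\mathfrak{q}$, noting $\mathfrak{p}^2A_\mathfrak{q}\subseteq\mathfrak{q}\mathfrak{p}A_\mathfrak{q}$. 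This gives $\mu(\mathfrak{p}A_\mathfrak{q})\le e$ pointwise, so you never need $\mathfrak{p}_f$ to be globally $e$-generated.
\end{enumerate}
Consequently you can drop normal flatness, the condition on $\mathfrak{p}^2/\mathfrak{p}^3$, and even freeness. It suffices that some map $(A/\mathfrak{p})^{e}\to\mathfrak{p}/\mathfrak{p}^2$, surjective at the generic point, becomes surjective after inverting a single $f\notin\mathfrak{p}$.

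\textbf{What each route buys.} Your codimension estimate never uses the complete intersection property. It therefore shows that on any $\mathsf{Reg}$-Q0 ring, every sublevel set $\{C\le n\}$ satisfies condition (2) of Lemma \ref{lem:nagatatop}. The complete intersection hypothesis enters only through stability under generalization \cite{Put21} and the openness of $\mathsf{CI}(A)$. In particular, on such rings the only obstruction to upper semicontinuity of $C$, as in the example before Theorem \ref{th:constructible}, is failure of stability under generalization. The paper's route, in exchange, is organized to fit the (A-1)--(A-3) framework. It shows explicitly that the analogue of (A-3) holds once $\mathsf{Reg}$-Q0 is assumed, which is the structural point the paper wants to make about exactly where \NC{} breaks down.
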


In particular, the $\mathsf{CI}_{\leq c}$-locus is open for every excellent ring and every $c \in \NN$, since every excellent ring satisfies $\mathsf{Reg}$-Q0. More generally, the same conclusion holds for every quasi-excellent ring. Therefore, although $\mathsf{CI}_{\leq c}$ does not satisfy \NC{}, the openness conclusion expected from the classical applications of \NC{} remains valid for the substantially broader class of $\mathsf{Reg}$-Q0 rings.

The bound occurring in $\mathsf{CI}_{\leq c}$ also leads naturally to the study of the codimension function. For a locally Noetherian scheme $X$, consider
\begin{align*}
    C \colon X \longrightarrow \NN, \qquad x \longmapsto \embdim(\mathcal{O}_{X,x})-\dim(\mathcal{O}_{X,x}).
\end{align*}
On the complete intersection locus, the inequality $C(x)\leq c$ characterizes the points at which the local ring is a complete intersection of codimension at most $c$. We investigate the topological behavior of this function.

The codimension function is not upper semicontinuous in general. In fact, upper semicontinuity can fail even when $X$ is an affine excellent scheme. Nevertheless, on a quasi-compact excellent scheme, the function retains the weaker but useful property of constructibility.

We prove the following result.

\begin{maintheoremC}[{Theorem \ref{th:constructible}}]
    Let $(X,\mathcal{O}_X)$ be a quasi-compact excellent scheme. Then, for the function
    \begin{align*}
        C \colon X \longrightarrow \NN, \qquad x \longmapsto \embdim(\mathcal{O}_{X,x})-\dim(\mathcal{O}_{X,x})
    \end{align*}
    on $X$, the subset
    \begin{align*}
        C^{-1}(\NN_{\leq n}) = \{x\in X\mid C(x)\leq n\}
    \end{align*}
    is constructible in $X$ for every $n \in \NN$.
\end{maintheoremC}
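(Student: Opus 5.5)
We prove constructibility by Noetherian induction on closed subsets of $X$. Since $X$ is quasi-compact and constructibility is local on a finite affine cover, we may assume $X=\Spec(A)$ with $A$ an excellent ring. Set $E_n=\{\mathfrak{p}\mid C(\mathfrak{p})\le n\}$. We use the standard criterion for Noetherian spaces: a subset $E$ is constructible if and only if, for every irreducible closed subset $Z=V(\mathfrak{p})$, either $E\cap Z$ or $Z\setminus E$ contains a nonempty open subset of $Z$. So it suffices to show that, for each prime $\mathfrak{p}$, the function $C$ is constant on a nonempty open subset of $V(\mathfrak{p})$.

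To do this, we fix $\mathfrak{p}$ and work over $V(\mathfrak{p})$. For $\mathfrak{q}\supseteq\mathfrak{p}$ we use the formula
\[
\embdim(A_\mathfrak{q})=\dim_{\kappa(\mathfrak{q})}\bigl(\mathfrak{q}A_\mathfrak{q}/\mathfrak{q}^2A_\mathfrak{q}\bigr)
\]
and the conormal exact sequence for $A\to A/\mathfrak{p}$:
\[
\mathfrak{p}/\mathfrak{p}^2\otimes\kappa(\mathfrak{q})\to \mathfrak{q}/\mathfrak{q}^2\otimes\kappa(\mathfrak{q})\to (\mathfrak{q}/\mathfrak{p})/(\mathfrak{q}/\mathfrak{p})^2\otimes\kappa(\mathfrak{q})\to 0 .
\]
Since $A$ is excellent, $A/\mathfrak{p}$ satisfies $\mathsf{Reg}$-Q0. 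After shrinking to some $D(f)\cap V(\mathfrak{p})$, we may therefore assume that $(A/\mathfrak{p})_\mathfrak{q}$ is regular of dimension $\height(\mathfrak{q}/\mathfrak{p})$, and that $\mathfrak{p}/\mathfrak{p}^2$ localized at $\mathfrak{q}$ is free over $(A/\mathfrak{p})_\mathfrak{q}$ of rank $r=\dim_{\kappa(\mathfrak{p})}\mathfrak{p}A_\mathfrak{p}/\mathfrak{p}^2A_\mathfrak{p}=\embdim(A_\mathfrak{p})$; this uses generic freeness over the domain $A/\mathfrak{p}$. The aim is then to show that the first map in the sequence is injective for $\mathfrak{q}$ in a smaller nonempty open set. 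Granting this, we get $\embdim(A_\mathfrak{q})=r+\height(\mathfrak{q}/\mathfrak{p})$. We then shrink once more so that $\height\mathfrak{q}=\height\mathfrak{p}+\height(\mathfrak{q}/\mathfrak{p})$ on the open set; this holds generically for excellent (in particular catenary, locally equidimensional after passing to the completion) rings, by the usual openness of equidimensionality statements. This would give $C(\mathfrak{q})=r-\dim A_\mathfrak{p}=C(\mathfrak{p})$, a constant.

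The main obstacle is the injectivity of $\mathfrak{p}/\mathfrak{p}^2\otimes\kappa(\mathfrak{q})\to\mathfrak{q}/\mathfrak{q}^2\otimes\kappa(\mathfrak{q})$, that is, that $\mathfrak{p}A_\mathfrak{q}\cap\mathfrak{q}^2A_\mathfrak{q}=\mathfrak{q}\mathfrak{p}A_\mathfrak{q}$ generically. My first attempt is to use normal flatness: by Hironaka/Bennett-type generic normal flatness along $V(\mathfrak{p})$ for excellent rings, on a dense open set $\operatorname{gr}_\mathfrak{p}(A)$ is flat over $A/\mathfrak{p}$. Combined with the regularity of $A/\mathfrak{p}$ at $\mathfrak{q}$, one can then choose a regular system of parameters $y_1,\dots,y_d$ of $(A/\mathfrak{p})_\mathfrak{q}$, lift them to $A_\mathfrak{q}$, and check that $\mathfrak{q}^2\cap\mathfrak{p}\subseteq\mathfrak{q}\mathfrak{p}$. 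This check is done by comparing graded pieces using flatness of $\mathfrak{p}^k/\mathfrak{p}^{k+1}$ over the regular ring.

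If generic normal flatness is not available in exactly the form needed, the fallback is to reduce to the case of a field. In that approach we pass to a finitely generated subring over which the situation spreads out; alternatively we use that the paper's (A-2)-style reduction by regular sequences shows that $C$ is preserved modulo a generic regular sequence of lifts $y_i$, and thereby reduce to the case $\height(\mathfrak{q}/\mathfrak{p})=0$.
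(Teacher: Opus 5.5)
Your route is genuinely different from the paper's. The paper never works along $V(\mathfrak p)$. Using $\varepsilon_0=\embdim$ and $\delta_2=\varepsilon_1$, it writes $C=\Delta_2-D$, where $D$ is the complete intersection defect and $\Delta_2$ the second Andr\'e--Quillen deviation. It then quotes the upper semicontinuity of $D$ (Alonso Tarrio--Rodicio) and of $\Delta_2$ (Ragusa) on excellent schemes, bounds both by quasi-compactness, and so exhibits $C^{-1}(\NN_{\leq n})$ as a finite union of locally closed sets. You instead show directly that $C$ equals $C(\mathfrak p)$ on a dense open subset of each $V(\mathfrak p)$, using generic regularity of $A/\mathfrak p$, generic freeness of $\mathfrak p/\mathfrak p^2$, the conormal sequence, and catenarity. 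This is elementary and avoids Andr\'e--Quillen homology. The paper's proof is shorter but rests entirely on the two cited semicontinuity theorems.

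As written, however, the step you flag is not proved. Without it you only get $\embdim(A_\mathfrak q)\leq r+\height(\mathfrak q/\mathfrak p)$. The normal-flatness sketch is not carried out, and the fallback ``reduction to $\height(\mathfrak q/\mathfrak p)=0$'' does not make sense as stated. The step closes without normal flatness: injectivity holds at every $\mathfrak q$ where $(A/\mathfrak p)_\mathfrak q$ is regular.

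\emph{Injectivity.} Let $(R,\mathfrak m)$ be local and $I$ an ideal with $R/I$ regular. Lift a regular system of parameters of $R/I$ to $y_1,\dots,y_h$ and set $J=(y_1,\dots,y_h)$. Then $\mathfrak m=I+J$, so $\mathfrak m^2\subseteq\mathfrak mI+J^2$, and it suffices to show $I\cap J^2\subseteq\mathfrak mI$. Take $c=\sum_{|\alpha|=n}c_\alpha y^\alpha\in I\cap J^n$. Reducing mod $I$, and using that $\operatorname{gr}(R/I)$ is a polynomial ring in the classes of the $y_i$, forces every $c_\alpha\in\mathfrak m=I+J$; hence $c\in JI+(I\cap J^{n+1})$. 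Iterating gives $I\cap J^2\subseteq\mathfrak mI+(I\cap J^N)$ for all $N$, and Artin--Rees gives $I\cap J^N\subseteq JI$ for $N\gg0$. With $R=A_\mathfrak q$ and $I=\mathfrak pA_\mathfrak q$, this is exactly $\mathfrak pA_\mathfrak q\cap\mathfrak q^2A_\mathfrak q=\mathfrak q\mathfrak pA_\mathfrak q$. Your normal-flatness route could also be completed, since generic normal flatness holds for any Noetherian ring by generic freeness of $\operatorname{gr}_{\mathfrak p}(A)$ over $A/\mathfrak p$. It is more than you need, though.

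\emph{Height additivity.} Your justification of $\height\mathfrak q=\height\mathfrak p+\height(\mathfrak q/\mathfrak p)$ via completions and equidimensionality is off target. The right reason is as follows. Restrict to $D(f)$ with $f\notin\mathfrak p$ lying in every minimal prime not contained in $\mathfrak p$. Then the minimal primes inside $\mathfrak q$ are exactly the $\mathfrak p_i\subseteq\mathfrak p$. Catenarity of the local domains $(A/\mathfrak p_i)_\mathfrak q$ gives $\height(\mathfrak q/\mathfrak p_i)=\height(\mathfrak p/\mathfrak p_i)+\height(\mathfrak q/\mathfrak p)$, and taking the maximum over $i$ gives the claim.

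With these two points supplied, your argument is complete.
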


This theorem describes the variation of codimension independently of the Nagata criterion. As an application, combining the constructibility of $C$ with the stability of $\mathsf{CI}_{\leq c}$ under generalization gives another proof that the $\mathsf{CI}_{\leq c}$-locus is open on every excellent scheme. Thus, the openness result can also be viewed as a consequence of the constructible behavior of the codimension function on the complete intersection locus.

The organization of this paper is as follows. In Section \ref{sec:preliminaries}, we recall basic definitions and results concerning singularity loci. In Section \ref{sec:conditions}, we introduce several conditions for properties of Noetherian local rings, including \NC{} and the quotient condition. In Section \ref{sec:conditions-for-NC}, we establish a criterion for \NC{} in terms of the conditions (A-1), (A-2), and (A-3). In Section \ref{sec:ci-bounded}, we first study the basic properties of complete intersections of bounded codimension, then construct counterexamples to \NC{}, and finally prove the openness of the $\mathsf{CI}_{\leq c}$-locus for $\mathsf{Reg}$-Q0 rings. In Section \ref{sec:constructible}, we study the codimension function on excellent schemes, prove its constructibility, and derive the openness result for excellent schemes from this perspective. Finally, in Section \ref{sec:openq}, we discuss several open questions.

\medskip

\noindent
\textbf{Acknowledgments.} The author would like to thank Taiga Ozaki and Kanau Shimada for helpful discussions during the early stages of this work. The author is also grateful to Kazuki Hayashi and Ryoma Takeuchi for their comments, and to Kazuki Hayashi for valuable comments on the references. Finally, the author would like to express his sincere gratitude to his advisor, Kazuma Shimomoto, for his guidance and support.

\section{Preliminaries}\label{sec:preliminaries}

Let $\PP$ be a property of local rings. For example, $\PP$ can be the property of being regular, being a complete intersection, being Gorenstein, or being Cohen--Macaulay. For any Noetherian ring $A$, we put
\begin{align*}
    \PP(A) \coloneqq \{ \mathfrak{p} \in \Spec(A) \mid A_{\mathfrak{p}} \text{ satisfies } \PP \}, 
\end{align*}
and we call $\PP(A)$ the $\PP$-locus of $A$. It is important to know whether $\PP(A)$ is open in $\Spec(A)$ or not. 

The following lemma is useful to know whether a subset of $\Spec(A)$ is open or not.

\begin{lemma}[{\cite[Theorem 24.2]{Mat86}}]\label{lem:nagatatop}
    Let $A$ be a Noetherian ring, and $U \subset \Spec A$ a subset. Then the following two conditions are necessary and sufficient for $U \subset \Spec A$ to be open. 
    \begin{enumerate}
        \item for $P,\ Q \in \Spec A,\ P \in U$ and $P \supset Q \Rightarrow Q \in U$;
        \item if $P \in U$ then $U$ contains a nonempty open subset of $V(P)$. 
    \end{enumerate}
\end{lemma}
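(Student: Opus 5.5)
Necessity is immediate: if $U$ is open, it is stable under generalization, because every open set containing $P$ contains every $Q \subset P$. This gives (1). For (2), the set $U \cap V(P)$ is open in $V(P)$ and contains $P$, so it is nonempty. For sufficiency I would use Noetherian induction on closed subsets. Put $F = \Spec A \setminus U$ and $Z = \overline{F}$. Since $\Spec A$ is a Noetherian space, $Z$ has finitely many irreducible components $V(P_1), \ldots, V(P_r)$, with generic points $P_i$. It suffices to show that $F$ is closed, i.e.\ $F = Z$. For that it is enough to show that each $P_i$ lies in $F$, since condition (1) then gives $V(P_i) \subset F$ for all $i$: if $Q \supset P_i$ and $Q \in U$, then $P_i \in U$.

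So suppose some $P = P_i$ belongs to $U$; I want a contradiction. By (2) there is a nonempty open subset $W$ of $V(P)$ with $W \subset U$. We may shrink $W$ to avoid the other components $V(P_j)$, $j \neq i$. Their intersection with $V(P)$ is a proper closed subset, because $P \notin V(P_j)$: the components are irredundant, and $V(P) \subset V(P_j)$ would force $P_j \subset P$ and hence equality. Since $V(P)$ is irreducible, the shrunken $W'$ is still a nonempty open subset of $V(P)$. Then $W'$ is also open in $Z$, namely $W' = Z \setminus \bigl(\bigcup_{j\neq i} V(P_j) \cup (V(P)\setminus W')\bigr)$, which is the complement of a closed set. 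This $W'$ is a nonempty open subset of $Z$ that is disjoint from $F$, since $W' \subset U$. That contradicts $Z = \overline{F}$, because a closure meets every nonempty open subset of itself.

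Hence every $P_i$ lies in $F$, so $Z \subset F$, $F$ is closed, and $U$ is open. The only delicate step is producing a nonempty open subset of $Z$ inside $U$ from condition (2). Condition (2) only gives openness in $V(P)$, not in $Z$, and removing the other irreducible components, using finiteness from Noetherianity, is exactly what fixes this. The remaining steps are formal.
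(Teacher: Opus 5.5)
Your proof is correct and follows essentially the standard argument for Matsumura's Theorem 24.2, which the paper cites without reproving: decompose $\overline{F}$ into finitely many irreducible components $V(P_i)$, use (1) to reduce to showing that each generic point $P_i$ lies in $F$, and use (2), after removing the other components, to obtain a nonempty open subset of $\overline{F}$ disjoint from $F$. One small correction of wording: despite your opening sentence, the argument uses no Noetherian induction, only the finiteness of the irreducible components of a closed subset of a Noetherian space.
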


There are many singularities in commutative rings. For example, the following definition introduces the main singularities considered in this paper.

\begin{definition}\label{def:main_sing}
    Let $(A, \mathfrak{m}, k)$ be a Noetherian local ring. We say that $A$ is
    \begin{enumerate}
        \item regular ($\mathsf{Reg}$) if $\dim(A) = \dim_k(\mathfrak{m}/\mathfrak{m}^2)$,
        \item a complete intersection ($\mathsf{CI}$) if $\hat{A} = R/(f_1, \ldots, f_c)$ for some regular local ring $R$ and a regular sequence $f_1, \ldots, f_c$ in $R$,
        \item Gorenstein ($\mathsf{Gor}$) if $A$ is Cohen--Macaulay and has a canonical module $\omega_A$ such that $\omega_A \cong A$,
        \item Cohen--Macaulay ($\mathsf{CM}$) if $\depth(A) = \dim(A)$. 
    \end{enumerate}
\end{definition}

For notation conventions, we introduce the following definition.

\begin{definition}\label{def:main_sing_notation}
    Let $(A, \mathfrak{m}, k)$ be a Noetherian local ring. We put
    \begin{align*}
        \embdim(A) \coloneqq \dim_k(\mathfrak{m}/\mathfrak{m}^2)
    \end{align*}
    and call it the \emph{embedding dimension} of $A$. We call $\embcodim(A) \coloneqq \embdim(A) - \dim(A)$ the \emph{codimension} of $A$. 
\end{definition}

In addition to the above singularities, we consider the following singularities in this paper. 

\begin{definition}\label{def:cicodim}
    Let $(A, \mathfrak{m}, k)$ be a Noetherian local ring and $c \geq 0$ an integer. We say that $A$ is a \emph{complete intersection of codimension at most} $c$ ($\mathsf{CI}_{\leq c}$) if $A$ is a complete intersection and $\embcodim(A) \leq c$. In particular, we say that $A$ is a hypersurface ($\mathsf{HS}$) if $A$ is $\mathsf{CI}_{\leq 1}$.
\end{definition}

Clearly, $\mathsf{CI}_{\leq 0}$ means $\mathsf{Reg}$. All singularities in Definition \ref{def:main_sing} are stable under generalization, i.e., for any Noetherian local ring $(A, \mathfrak{m}, k)$ and any prime ideal $\mathfrak{p}$ of $A$, if $A_{\mathfrak{m}}$ satisfies $\PP$, then $A_{\mathfrak{p}}$ satisfies $\PP$, where $\PP = \mathsf{Reg},\ \mathsf{CI},\ \mathsf{Gor},\ \mathsf{CM}$. If we consider the class of Noetherian local rings that are complete intersections of codimension exactly $c$, then it is not stable under generalization for $c > 0$. For example, let $A = k[[x, y]]/(y^2-x^3)$ and $\mathfrak{p} = (0)$. $A$ is a complete intersection of codimension $1$, but $A_{\mathfrak{p}}$ is a field, which is a complete intersection of codimension $0$. Hence, we consider the class as in Definition \ref{def:cicodim}. Indeed, the class $\mathsf{CI}_{\leq c}$ is stable under generalization; see \cite{Put21}. Moreover, the following hierarchy is well-known:
\begin{align*}
    \mathsf{Reg} \Longrightarrow \mathsf{HS} \Longrightarrow \mathsf{CI}_{\leq 2} \Longrightarrow \cdots \Longrightarrow \mathsf{CI}_{\leq c} \Longrightarrow \cdots \Longrightarrow \mathsf{CI} \Longrightarrow \mathsf{Gor} \Longrightarrow \mathsf{CM}.
\end{align*}

It is well-known that the definition of complete intersection in Definition \ref{def:main_sing} (2) is independent of the choice of a regular local ring $R$ and a regular sequence $f_1, \ldots, f_c$ in $R$. We also note that $c = \embdim(A) - \dim(A)$ for a minimal Cohen presentation $\hat{A} = R/(f_1, \ldots, f_c)$; see \cite[\S 21]{Mat86}.

\section{On the conditions for Noetherian rings}\label{sec:conditions}

First, we define the following condition for a property $\PP$ of local rings for convenience of notation. 

\begin{definition}\label{def:openness}
    Let $\PP$ be a property of local rings and $A$ a Noetherian ring. 
    \begin{enumerate}
        \item We say $A$ satisfies $\PP$ if $A_{\mathfrak{p}}$ satisfies $\PP$ for all $\mathfrak{p} \in \Spec(A)$.
        \item We say $A$ satisfies $\PP$-0 if $\PP(A)$ contains a nonempty open subset of $\Spec(A)$,
        \item We say $A$ satisfies $\PP$-1 if $\PP(A)$ is an open subset of $\Spec(A)$,
        \item We say $A$ satisfies $\PP$-2 if $\PP(B)$ is an open subset of $\Spec(B)$ for any finite type $A$-algebra $B$,
        \item We say $A$ satisfies $\PP$-Q0 if $\PP(A/\mathfrak{p})$ contains a nonempty open subset of $\Spec(A/\mathfrak{p})$ for any $\mathfrak{p} \in \Spec(A)$,
        \item We say $A$ satisfies $\PP$-Q1 if $\PP(A/\mathfrak{p})$ is an open subset of $\Spec(A/\mathfrak{p})$ for any $\mathfrak{p} \in \Spec(A)$,
        \item We say $A$ satisfies $\PP$-Q2 if $\PP(B/\mathfrak{q})$ is an open subset of $\Spec(B/\mathfrak{q})$ for any finite type $A$-algebra $B$ and any $\mathfrak{q} \in \Spec(B)$.
    \end{enumerate}
    ``Q'' in the above conditions stands for ``Quotient''.
\end{definition}

Although we define $\PP$-Q2 because of its theoretical interest, we hardly ever use it in this paper. Moreover, we often assume that fields satisfy $\PP$ in this paper. 

\begin{remark}\label{rem:openness}
    We remark the following two points:
    \begin{enumerate}
        \item Let $\PP$ be a property of local rings. Then, we have the following implications: 
        \begin{align*}
            &\PP\text{-2} \implies \PP\text{-1} \\
            &\PP\text{-2} \implies \PP\text{-Q2} \implies \PP\text{-Q1}.
        \end{align*}
        Moreover, if fields satisfy $\PP$, then we have the following implications:
        \begin{align*}
            \PP\text{-Q1} \implies \PP\text{-Q0}.
        \end{align*}
        In this paper, we consider the case when $\PP$ is a property of local rings such that fields satisfy $\PP$. In this case, we have the following implications:
        \begin{align*}
            \PP\text{-2} \implies \PP\text{-Q2} \implies \PP\text{-Q1} \implies \PP\text{-Q0}.
        \end{align*}
        \item For two properties $\PP_1$ and $\PP_2$ of local rings, if $\PP_1 \Longrightarrow \PP_2$ holds, then we can check easily that $\PP_1$-Q0 $\Longrightarrow \PP_2$-Q0.
    \end{enumerate}
\end{remark}

Whether converses of the above implications hold or not depends on the property $\PP$.

The weakest condition, $\PP$-Q0, is preserved under several operations. 

\begin{proposition}\label{prop:Q0preserved}
    Let $\PP$ be a property of local rings and $A$ be a Noetherian ring. Then, the following hold:
    \begin{enumerate}
        \item If $A$ satisfies $\PP$\textnormal{-Q0}, then $A_a$ satisfies $\PP$\textnormal{-Q0} for any $a \in A \setminus \sqrt{0}$.
        \item If $A$ satisfies $\PP$\textnormal{-Q0}, then $A/I$ satisfies $\PP$\textnormal{-Q0} for any ideal $I \neq A$ of $A$.
        \item If $A$ satisfies $\PP$\textnormal{-Q0}, then $A_{\mathfrak{p}}$ satisfies $\PP$\textnormal{-Q0} for any $\mathfrak{p} \in \Spec(A)$.
        \item If $A$ satisfies $\PP$\textnormal{-Q0}, then $A[x]/(x^r)$ satisfies $\PP$\textnormal{-Q0} for any integer $r \geq 1$.
    \end{enumerate}
\end{proposition}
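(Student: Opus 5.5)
All four parts follow from one observation: every prime of the new ring comes from a prime $\mathfrak{p}$ of $A$, and the corresponding integral quotient is canonically isomorphic either to $A/\mathfrak{p}$ itself or to a localization of $A/\mathfrak{p}$. So in each case I will identify the integral quotients of the new ring and check that a nonempty open subset of $\Spec(A/\mathfrak{p})$ contained in $\PP(A/\mathfrak{p})$ yields a nonempty open subset of the smaller spectrum lying in the $\PP$-locus. The $\PP$-locus of a ring only depends on its local rings, and these are unchanged under the identifications.

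\textbf{Part (2).} Primes of $A/I$ are $\mathfrak{p}/I$ with $\mathfrak{p}\supseteq I$. Moreover $(A/I)/(\mathfrak{p}/I)\cong A/\mathfrak{p}$. Hence $\PP$-Q0 for $A/I$ is literally a subset of the conditions defining $\PP$-Q0 for $A$.

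\textbf{Part (4).} The element $x$ is nilpotent in $B=A[x]/(x^r)$. Hence every prime of $B$ contains $x$ and is of the form $\mathfrak{p}B+(x)$ with $\mathfrak{p}\in\Spec(A)$. Its quotient is
\[
B/(\mathfrak{p}B+(x))\cong A/\mathfrak{p},
\]
so the condition reduces again to that for $A$. The hypothesis $r\ge 1$ only ensures $B\neq 0$.

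\textbf{Parts (1) and (3).} These are the only parts needing an argument, and they are a single statement about localization. Let $S$ be a multiplicative set ($S=\{a^n\}$ in (1), $S=A\setminus\mathfrak{p}$ in (3)). The primes of $S^{-1}A$ are $\mathfrak{q}S^{-1}A$ with $\mathfrak{q}\cap S=\emptyset$, and
\[
S^{-1}A/\mathfrak{q}S^{-1}A\cong \bar S^{-1}(A/\mathfrak{q}),
\]
where $\bar S$ is the image of $S$; it avoids $0$ in the domain $D=A/\mathfrak{q}$. The map $\Spec(\bar S^{-1}D)\to\Spec(D)$ is a homeomorphism onto its image, and local rings are preserved. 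Thus $\PP(\bar S^{-1}D)$ is the preimage of $\PP(D)$.

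By hypothesis there is $0\neq f\in D$ with $D(f)\subseteq\PP(D)$. Its preimage in $\Spec(\bar S^{-1}D)$ is the open set $D(f/1)$. This set is nonempty: $f/1\neq 0$ because $D$ is a domain and $0\notin\bar S$, so the zero ideal of $\bar S^{-1}D$ lies in $D(f/1)$.

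The main point to be careful about is exactly this nonemptiness, namely that the generic point of $\Spec(D)$ survives localization. It does, since $\mathfrak{q}\cap S=\emptyset$ is required for $\mathfrak{q}S^{-1}A$ to be a prime of $S^{-1}A$ at all. In (1), the assumption $a\notin\sqrt0$ is only needed so that $A_a\neq 0$ and the statement is non-vacuous.
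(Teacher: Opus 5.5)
Your proposal is correct and follows essentially the same route as the paper: you handle (2) and (4) by identifying each integral quotient with some $A/\mathfrak{p}$, and (1), (3) by intersecting a nonempty open subset of $V(\mathfrak{q})$ contained in $\PP(A/\mathfrak{q})$ with the image of the localized spectrum. Treating (1) and (3) together via a multiplicative set $S$, and checking nonemptiness explicitly through the generic point $\mathfrak{q}$, clarifies a step the paper leaves implicit in ``taking $\cap D(a)$''; it is not a different argument.
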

\begin{proof}
    (1) Let $\mathfrak{p}A_a$ be a prime ideal of $A_a$. Since $A$ satisfies $\PP$-Q0, we can take an open subset $U$ of $\Spec(A)$ such that $\emptyset \neq U \cap V(\mathfrak{p}) \subset \PP(A/\mathfrak{p})$. Taking $\cap D(a)$, we have that $A_a$ satisfies $\PP$-Q0. 

    (2) Let $\mathfrak{p}/I$ be a prime ideal of $A/I$. Since $A$ satisfies $\PP$-Q0, we can take an open subset $U$ of $\Spec(A)$ such that $\emptyset \neq U \cap V(\mathfrak{p}) \subset \PP(A/\mathfrak{p})$. Hence, we have $\emptyset \neq U \cap \Spec(A/I) \cap V(\mathfrak{p}/I) \subset \PP((A/I)/(\mathfrak{p}/I))$, which means $A/I$ satisfies $\PP$-Q0.

    (3) Let $\mathfrak{q}A_{\mathfrak{p}}$ be a prime ideal of $A_{\mathfrak{p}}$. Since $A$ satisfies $\PP$-Q0, we can take an open subset $U$ of $\Spec(A)$ such that $\emptyset \neq U \cap \Spec(A/\mathfrak{q}) \subset \PP(A/\mathfrak{q})$. Then, we have
    \begin{align*}
        \emptyset &\neq U \cap \Spec(A_{\mathfrak{p}}/\mathfrak{q}A_{\mathfrak{p}}) \\ 
        &= U \cap \Spec(A/\mathfrak{q}) \cap \Spec(A_{\mathfrak{p}}) \\
        &\subset \PP(A/\mathfrak{q}) \cap \Spec(A_{\mathfrak{p}}) \\
        &= \PP(A_{\mathfrak{p}}/\mathfrak{q}A_{\mathfrak{p}}). 
    \end{align*}
    Therefore, $A_{\mathfrak{p}}$ satisfies $\PP$-Q0.

    (4) For any $\mathfrak{P} \in \Spec(A[x]/(x^r))$, we have $\mathfrak{P} = (\mathfrak{p}, x)$, where $\mathfrak{p} = \mathfrak{P} \cap A$. Moreover, $(A[x]/(x^r))/\mathfrak{P} \cong A/\mathfrak{p}$. Thus, (4) is immediate. 
\end{proof}

These properties (1) to (3) are used in the proof of Proposition \ref{prop:conditions-imply-NC} and (4) is used in the proof of Theorem \ref{th:counterexampleNC}.

Nagata considered the following condition for a property $\PP$ of local rings in \cite{Nag59}, called the Nagata criterion, \NC{}.

\begin{definition}[Nagata criterion]\label{def:Nagata_criterion}
    Let $\PP$ be a property of local rings. We say that $\PP$ satisfies the \emph{Nagata criterion} if for any Noetherian ring $A$, $\PP$-Q0 implies $\PP$-1. In other words, we say that $\PP$ satisfies the Nagata criterion if for any Noetherian ring $A$, the following condition holds:
    \begin{enumerate}
        \item[(NC)] If $\PP(A/\mathfrak{p})$ contains a nonempty open subset of $\Spec(A/\mathfrak{p})$ for any $\mathfrak{p} \in \Spec(A)$, then $\PP(A)$ is an open subset of $\Spec(A)$.
    \end{enumerate}
\end{definition}

\begin{remark}\label{rem:NC}
    Whether \NC{} holds or not depends on the property $\PP$. Note that, for properties $\PP_1,\ \PP_2$ of local rings with $\PP_1 \Longrightarrow \PP_2$, the satisfaction of \NC{} for $\PP_1$ does not imply its satisfaction for $\PP_2$. For example, the following singularities satisfy \NC{}. 
    \begin{enumerate}
        \item $\PP = \mathsf{Reg}$      (\cite{Nag59}, \cite[Theorem 24.4]{Mat86})
        \item $\PP = \mathsf{CI}$       (\cite[Theorem 3.1]{GM78})
        \item $\PP = \mathsf{Gor}$      (\cite[Theorem 1.4]{GM78}, \cite[Theorem 24.6]{Mat86})
        \item $\PP = \mathsf{CM}$       (\cite{MV77}, \cite[Theorem 24.5]{Mat86})
        \item $\PP = (\mathsf{T}_n)$    (\cite[Theorem 2.3]{GM78})
        \item $\PP = (\mathsf{S}_n)$    (\cite{MV77}, \cite[Theorem 2.2]{Tak99})
        \item $\PP = (\mathsf{R}_n)$    (\cite{MV77}, \cite[Theorem 3.2]{Tak99})
        \item $\PP = \mathsf{H}_n$      (\cite[Proposition 3.1]{Rag80})
    \end{enumerate}
    However, \NC{} does not hold for $\PP = \mathsf{Buchsbaum}$ (\cite[Theorem]{Kan91}). 
\end{remark}

Moreover, Valabrega considered the following condition for a property $\PP$ of local rings in \cite[Definition 3]{Val78}, which he called the quotient condition, \QC{}. 

\begin{definition}[Quotient condition]\label{def:quotient_condition}
    Let $\PP$ be a property of local rings. We say that $\PP$ satisfies the \emph{quotient condition} if for any Noetherian ring $A$, $\PP$ implies $\PP$-Q0. In other words, we say that $\PP$ satisfies the quotient condition if for any Noetherian ring $A$, the following condition holds:
    \begin{enumerate}
        \item[(QC)] If $A_{\mathfrak{p}}$ satisfies $\PP$ for all $\mathfrak{p} \in \Spec(A)$, then $\PP(A/\mathfrak{p})$ contains a nonempty open subset of $\Spec(A/\mathfrak{p})$ for any $\mathfrak{p} \in \Spec(A)$.
    \end{enumerate}
\end{definition}

\QC{} is a converse-like condition to \NC{}. 

If we find that a property $\PP$ of local rings satisfies \NC{}, it is also worth considering whether \QC{} holds or not because of the following proposition.

\begin{proposition}\label{prop:NCandQC}
    Let $\PP$ be a property of local rings. Assume that $\PP$ satisfies \NC{} and \QC{}. Let $A \twoheadrightarrow B$ be a surjective ring homomorphism. Then, if $A$ satisfies $\PP$, $\PP(B)$ is an open subset of $\Spec(B)$.
\end{proposition}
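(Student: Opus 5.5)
Since $\PP$ satisfies \NC{}, it suffices to prove that $B$ satisfies $\PP$-Q0. We identify $B$ with $A/I$ for $I = \ker(A \twoheadrightarrow B)$, noting that $I \neq A$ when $B$ is a nonzero ring; if $B = 0$, then $\Spec(B) = \emptyset$ is trivially open. The proof then has two steps.

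\textbf{Step 1.} Since $A$ satisfies $\PP$, meaning $A_{\mathfrak{p}}$ satisfies $\PP$ for every $\mathfrak{p} \in \Spec(A)$, and $\PP$ satisfies \QC{}, the ring $A$ satisfies $\PP$-Q0. By definition, this means that for every prime $\mathfrak{p}$ of $A$, the locus $\PP(A/\mathfrak{p})$ contains a nonempty open subset of $\Spec(A/\mathfrak{p})$.

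\textbf{Step 2.} We apply Proposition \ref{prop:Q0preserved} (2) to conclude that $A/I \cong B$ satisfies $\PP$-Q0. Concretely, every prime of $B$ has the form $\mathfrak{p}/I$ with $\mathfrak{p} \supset I$, and there is an identification $B/(\mathfrak{p}/I) \cong A/\mathfrak{p}$. Under this identification, the $\PP$-locus of $B/(\mathfrak{p}/I)$ corresponds to $\PP(A/\mathfrak{p})$, because the local rings agree at corresponding primes. The \NC{} hypothesis, applied to the Noetherian ring $B$, then yields that $\PP(B)$ is open in $\Spec(B)$.

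There is no substantial obstacle. The only point requiring care is that $\PP$-Q0 is a condition purely on the quotient domains $A/\mathfrak{p}$, and these are literally shared between $A$ and $B$ for $\mathfrak{p} \supset I$. This is what makes the passage through the surjection harmless, even though $B$ itself need not satisfy $\PP$. We should also note that $B$ is Noetherian, being a quotient of the Noetherian ring $A$, which is needed for both \NC{} and Proposition \ref{prop:Q0preserved} to apply.
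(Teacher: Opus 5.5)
Your proof is correct and is essentially the paper's argument: identify $B$ with $A/I$, use \QC{} to get that $A$ satisfies $\PP$-Q0, transfer this to $B$ via $B/(\mathfrak{p}/I) \cong A/\mathfrak{p}$, and conclude with \NC{}. Your remarks about the zero ring and about $B$ being Noetherian are extra care that the paper leaves implicit.
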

\begin{proof}
    We have $B = A/I$ for some ideal $I$ of $A$. For any prime ideal $\mathfrak{q} \in \Spec(B)$, we have $B/\mathfrak{q} = A/\mathfrak{p}$, where $\mathfrak{p} = A \cap \mathfrak{q}$. Since $\PP$ satisfies \QC{} and $A$ satisfies $\PP$, $\PP(A/\mathfrak{p})$ contains a nonempty open subset of $\Spec(A/\mathfrak{p})$. In other words, $\PP(B/\mathfrak{q})$ contains a nonempty open subset of $\Spec(B/\mathfrak{q})$. Since $\PP$ satisfies \NC{}, $\PP(B)$ is an open subset of $\Spec(B)$.
\end{proof}

We often consider the case when regular implies $\PP$ for a property $\PP$ of local rings. In this case, by Cohen's structure theorem, for any complete Noetherian local ring $B$, we can find a surjective ring homomorphism $A \twoheadrightarrow B$ from a regular, in particular $\PP$, local ring $A$. Hence, if $\PP$ satisfies \NC{} and \QC{}, then $\PP(B)$ is an open subset of $\Spec(B)$ for any complete Noetherian local ring $B$ by Proposition \ref{prop:NCandQC}.

It is well known that the regular, complete intersection, Gorenstein, and Cohen--Macaulay properties satisfy \NC{} and are stable under polynomial extensions. However, we make the following remarks about \QC{}. 

\begin{remark}\label{rem:regular_QC}
    The following singularities satisfy \QC{}:
    \begin{enumerate}
        \item $\PP = \mathsf{CI}$       (\cite[Corollary 3.4]{GM78})
        \item $\PP = \mathsf{Gor}$      (\cite[Corollary 1.6]{GM78}, \cite[Exercise 24.3]{Mat86})
        \item $\PP = \mathsf{CM}$       (\cite[Theorem 2.1]{MV77}, \cite[Exercise 24.2]{Mat86})
    \end{enumerate}
    However, \QC{} does not hold for the regular property in general, as the following example shows. 

    Let $\PP = \mathsf{Reg}$ be the property of being regular. Then, $\mathsf{Reg}$ does not satisfy \QC{} in general. In \cite[{\S 4}]{Nag59}, Nagata gave an example of a regular local ring $A$ such that $A$ does not satisfy $\mathsf{Reg}$-2. In other words, he constructed a regular local ring $A$ which is not J-2. More precisely, we can find a regular local ring $A$ and an element $c$ in a domain containing $A$ such that $\mathsf{Reg}(A[c])$ does not contain any nonempty open subset of $\Spec(A[c])$. Hence, a polynomial extension of $A$ in one variable $A[x]$ does not satisfy $\mathsf{Reg}$-Q0. This is because we can take a prime ideal $\mathfrak{p}$ of $A[x]$ such that $A[x]/\mathfrak{p} \cong A[c]$ and $\mathsf{Reg}(A[x]/\mathfrak{p})$ does not contain any nonempty open subset of $\Spec(A[x]/\mathfrak{p})$ although $A[x]$ satisfies $\mathsf{Reg}$. 
\end{remark}

Another reason to consider \NC{} is that we can claim that $\PP(A)$ itself is open if $\PP$ satisfies \NC{} and $A$ satisfies $\mathsf{Reg}$-Q0. $\mathsf{Reg}$-Q0 means that the regular locus $\mathsf{Reg}(A/\mathfrak{p})$ contains a nonempty open subset of $\Spec(A/\mathfrak{p})$ for every prime ideal $\mathfrak{p}$ of $A$. More precisely, we have the following proposition.

\begin{proposition}\label{prop:NCandExcellent}
    Let $\PP$ be a property of local rings. Assume that regular implies $\PP$ and $\PP$ satisfies \NC{}. Let $A$ be a Noetherian ring which satisfies $\mathsf{Reg}$\textnormal{-Q0}. Then, $\PP(A)$ is an open subset of $\Spec(A)$.
\end{proposition}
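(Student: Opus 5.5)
The plan is to reduce directly to the definition of \NC{}. Since $\PP$ satisfies \NC{}, it suffices to show that $A$ satisfies $\PP$-Q0. Once we have that, \NC{} applied to the Noetherian ring $A$ gives that $\PP(A)$ is open in $\Spec(A)$.

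To verify $\PP$-Q0, fix $\mathfrak{p} \in \Spec(A)$. By the hypothesis $\mathsf{Reg}$-Q0, the regular locus $\mathsf{Reg}(A/\mathfrak{p})$ contains a nonempty open subset $U$ of $\Spec(A/\mathfrak{p})$. For every $\mathfrak{q}/\mathfrak{p} \in U$, the local ring $(A/\mathfrak{p})_{\mathfrak{q}/\mathfrak{p}}$ is regular. Because regular implies $\PP$, this local ring also satisfies $\PP$, so $U \subset \PP(A/\mathfrak{p})$. This is exactly the observation in Remark \ref{rem:openness}(2), applied with $\PP_1 = \mathsf{Reg}$ and $\PP_2 = \PP$: it gives $\mathsf{Reg}\text{-Q0} \Rightarrow \PP\text{-Q0}$ for the ring $A$.

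Hence $A$ satisfies $\PP$-Q0, and \NC{} gives the openness of $\PP(A)$. There is no real obstacle here. The only point to check is that the implication ``regular $\Rightarrow \PP$'' is applied to the local rings of the quotients $A/\mathfrak{p}$, not to those of $A$. This is legitimate because $\PP$ is a property of arbitrary Noetherian local rings.
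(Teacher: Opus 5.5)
Your proof is correct and matches the paper's argument: for each $\mathfrak{p}$ you pass from $\mathsf{Reg}(A/\mathfrak{p})$ to $\PP(A/\mathfrak{p})$ using that regular local rings satisfy $\PP$, which gives $\PP$-Q0 for $A$. You then apply \NC{} to $A$ to conclude that $\PP(A)$ is open.
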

\begin{proof}
    Let $\mathsf{Reg}$ be the property of being a regular local ring.

    Let $\mathfrak{p} \in \Spec(A)$. Since $A$ is $\mathsf{Reg}$-Q0, $\mathsf{Reg}(A/\mathfrak{p})$ contains a nonempty open subset of $\Spec(A/\mathfrak{p})$. Since regular implies $\PP$, $\PP(A/\mathfrak{p})$ contains a nonempty open subset of $\Spec(A/\mathfrak{p})$. Since $\mathfrak{p} \in \Spec(A)$ is arbitrary, $A$ satisfies $\PP$-Q0. Since $\PP$ satisfies \NC{}, $\PP(A)$ is an open subset of $\Spec(A)$.
\end{proof}

In particular, $\PP(A)$ is an open subset of $\Spec(A)$ for any excellent ring $A$ if regularity implies $\PP$ and $\PP$ satisfies \NC{}. 

Almost all commutative rings in algebraic geometry are excellent rings. Hence, we can say that $\PP(A)$ is an open subset of $\Spec(A)$ for almost all commutative rings $A$ in algebraic geometry in terms of Proposition \ref{prop:NCandExcellent} if regularity implies $\PP$ and $\PP$ satisfies \NC{}. Even if we do not have that $\PP$ satisfies \NC{}, it is still worth considering whether $\PP(A)$ is an open subset of $\Spec(A)$ for an excellent ring $A$ or not. 

The Nagata criterion is the implication $\PP$-Q0 $\Longrightarrow$ $\PP$-1. Therefore, let us consider whether the other implications hold or not. We can claim the converse of Remark \ref{rem:openness} (1) under the assumption that $\PP$ satisfies some additional conditions including \NC{} in the following proposition. 

\begin{proposition}\label{prop:opennessconv}
    Let $\PP$ be a property of local rings. Then, we have the following implications:
    \begin{enumerate}
        \item If $\PP$ satisfies \NC{}, then $\PP\textnormal{-Q0} \implies \PP\textnormal{-Q1}$ and $\PP\textnormal{-Q2} \implies \PP\textnormal{-2}$.
        \item Assume that fields satisfy $\PP$. If $\PP$ satisfies \NC{}, \QC{} and stability under taking polynomial extensions, then $\PP\textnormal{-Q1} \implies \PP\textnormal{-Q2}$.
    \end{enumerate}
\end{proposition}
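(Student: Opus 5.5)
For (1), I will use Proposition \ref{prop:Q0preserved} (2) together with the definition of \NC{}. Suppose $A$ satisfies $\PP$-Q0 and let $\mathfrak{p} \in \Spec(A)$. By Proposition \ref{prop:Q0preserved} (2), applied with $I = \mathfrak{p}$, the ring $A/\mathfrak{p}$ again satisfies $\PP$-Q0. Since $\PP$ satisfies \NC{}, $\PP(A/\mathfrak{p})$ is open in $\Spec(A/\mathfrak{p})$. As $\mathfrak{p}$ is arbitrary, $A$ satisfies $\PP$-Q1.

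For the second half of (1), assume $A$ satisfies $\PP$-Q2 and let $B$ be a finite type $A$-algebra. For every $\mathfrak{q} \in \Spec(B)$, the set $\PP(B/\mathfrak{q})$ is open by hypothesis. We need it to contain a nonempty open subset of $\Spec(B/\mathfrak{q})$, so that $B$ satisfies $\PP$-Q0; \NC{} then gives that $\PP(B)$ is open. The only issue is nonemptiness. An open set that happens to be empty is useless, so this direction needs $\PP(B/\mathfrak{q}) \neq \emptyset$. Under the standing convention that fields satisfy $\PP$, the generic point $(0)$ of $B/\mathfrak{q}$ lies in $\PP(B/\mathfrak{q})$, because $(B/\mathfrak{q})_{(0)}$ is a field. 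Hence $\PP(B/\mathfrak{q})$ is a nonempty open set and Q0 holds. I will state explicitly that this convention is used; this nonemptiness point is the main subtlety in (1).

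For (2), assume $A$ satisfies $\PP$-Q1, and let $B$ be a finite type $A$-algebra and $\mathfrak{q} \in \Spec(B)$. Write $B/\mathfrak{q}$ as a quotient of a polynomial ring $A[x_1, \ldots, x_n]$, and let $\mathfrak{p} = \mathfrak{Q} \cap A$, where $\mathfrak{Q}$ is the preimage of $\mathfrak{q}$. Then $B/\mathfrak{q}$ is a quotient of $C \coloneqq (A/\mathfrak{p})[x_1, \ldots, x_n]$. The plan proceeds in three steps.

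First, $\PP(A/\mathfrak{p})$ is open by Q1, and it is nonempty because it contains the generic point. Choose $f \in A/\mathfrak{p}$, $f \neq 0$, with $D(f) \subset \PP(A/\mathfrak{p})$. Then $(A/\mathfrak{p})_f$ satisfies $\PP$, since every localization at a prime of $D(f)$ does.

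Second, stability under polynomial extensions shows that $C_f = (A/\mathfrak{p})_f[x_1, \ldots, x_n]$ satisfies $\PP$. I need this at all local rings of the polynomial ring. I will interpret ``stability under polynomial extensions'' in the form ``$R$ satisfies $\PP$ implies $R[x]$ satisfies $\PP$'' and then induct on $n$.

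Third, $B_f/\mathfrak{q}B_f$ is a quotient of $C_f$, which satisfies $\PP$. By \QC{} and \NC{} through Proposition \ref{prop:NCandQC}, $\PP(B_f/\mathfrak{q}B_f)$ is open in $\Spec(B_f/\mathfrak{q}B_f)$, which is the open subset $D(f)$ of $\Spec(B/\mathfrak{q})$.

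The main obstacle is passing from the open set $D(f)$ to all of $\Spec(B/\mathfrak{q})$. The natural remedy is noetherian induction. The complement $V(f)$ corresponds to $B/(\mathfrak{q}, f)$, a finite type algebra over $A/(\mathfrak{p}, f)$, which is strictly smaller. Instead of patching by hand, I would rather show directly that $B/\mathfrak{q}$ satisfies $\PP$-Q0 and then apply \NC{}. For each prime $\mathfrak{q}' \supseteq \mathfrak{q}$, the same argument applied to $B/\mathfrak{q}'$ produces a nonempty open subset of $\PP(B/\mathfrak{q}')$. It is nonempty because it contains the generic point, using that fields satisfy $\PP$. This is exactly Q0 for $B/\mathfrak{q}$, and \NC{} then yields that $\PP(B/\mathfrak{q})$ is open. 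So the argument reduces cleanly to the three steps above, and the nonemptiness in the first step is where the field hypothesis enters.
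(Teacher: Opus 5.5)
Your proposal is correct and follows the paper's proof essentially step for step. For (1) you pass $\PP$-Q0 to $A/\mathfrak{p}$ and apply \NC{}. For (2) you localize at an $f$ with $(A/\mathfrak{p})_f$ satisfying $\PP$, use polynomial stability and Proposition~\ref{prop:NCandQC} to get a nonempty open subset of $\PP(B/\mathfrak{q}')$ for every $\mathfrak{q}'$, and then conclude with \NC{}.

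Your explicit appeal to the convention that fields satisfy $\PP$ in the second half of (1) is in fact necessary, and the paper's proof passes over this point silently. Without it the implication $\PP$-Q2 $\Rightarrow$ $\PP$-2 can fail. For example, take $\PP$ = ``not reduced'', which satisfies \NC{} vacuously: every field $k$ is $\PP$-Q2, but $\PP(k[x,y]/(xy,y^2))$ is the single closed point $(x,y)$, which is not open.
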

\begin{proof}
    (1) First, we prove that $\PP\textnormal{-Q0} \implies \PP\textnormal{-Q1}$ under the assumption that $\PP$ satisfies \NC{}. Let $A$ be a Noetherian ring that satisfies $\PP\textnormal{-Q0}$. Fix a prime ideal $\mathfrak{p} \in \Spec(A)$ arbitrarily. We have to show that $\PP(A/\mathfrak{p})$ is an open subset of $\Spec(A/\mathfrak{p})$. For any prime ideal $\mathfrak{q} \in \Spec(A/\mathfrak{p}) = V(\mathfrak{p})$, we have:
    \begin{align*}
        \PP(A/\mathfrak{q}) = \PP((A/\mathfrak{p})/(\mathfrak{q}/\mathfrak{p})).
    \end{align*}
    Since $A$ satisfies $\PP\textnormal{-Q0}$, $\PP(A/\mathfrak{q})$ contains a nonempty open subset of $\Spec(A/\mathfrak{q})$. In other words, $\PP((A/\mathfrak{p})/(\mathfrak{q}/\mathfrak{p}))$ contains a nonempty open subset of $\Spec((A/\mathfrak{p})/(\mathfrak{q}/\mathfrak{p}))$. Since $\PP$ satisfies \NC{}, $\PP(A/\mathfrak{p})$ is an open subset of $\Spec(A/\mathfrak{p})$. Since $\mathfrak{p} \in \Spec(A)$ is arbitrary, we conclude that $A$ satisfies $\PP\textnormal{-Q1}$. Second, we prove that $\PP\textnormal{-Q2} \implies \PP\textnormal{-2}$ under the assumption that $\PP$ satisfies \NC{}. Let $A$ be a Noetherian ring that satisfies $\PP\textnormal{-Q2}$. Fix a finite type $A$-algebra $B$ arbitrarily. We have to show that $\PP(B)$ is an open subset of $\Spec(B)$. For any prime ideal $\mathfrak{q} \in \Spec(B)$, $\PP(B/\mathfrak{q})$ is an open subset of $\Spec(B/\mathfrak{q})$ since $A$ satisfies $\PP\textnormal{-Q2}$. Since $\PP$ satisfies \NC{}, $\PP(B)$ is an open subset of $\Spec(B)$.

    (2) We prove that $\PP\textnormal{-Q1} \implies \PP\textnormal{-Q2}$ under the assumption that $\PP$ satisfies \NC{}, \QC{} and stability under taking polynomial extensions. Since fields satisfy $\PP$, $\PP$-Q1 implies $\PP$-Q0 by Remark \ref{rem:openness}. Thus, it is enough to prove that $\PP$-Q0 implies $\PP$-Q2. Let $A$ be a Noetherian ring that satisfies $\PP\textnormal{-Q0}$. Fix a finite type $A$-algebra $B$ arbitrarily. We have to show that $\PP(B/\mathfrak{q})$ is an open subset of $\Spec(B/\mathfrak{q})$ for any $\mathfrak{q} \in \Spec(B)$. Consider the natural mapping:
    \begin{align*}
        A \longrightarrow A[x_1,\ldots, x_n]/I = B \longrightarrow B/\mathfrak{q}.
    \end{align*}
    Let $\mathfrak{p}$ be the kernel of the composition $A \longrightarrow B/\mathfrak{q}$. We may regard $A/\mathfrak{p}$ as a subring of $B/\mathfrak{q}$. Since $B/\mathfrak{q}$ is a domain, $\mathfrak{p}$ is a prime ideal of $A$. Moreover, $B/\mathfrak{q}$ is a finite type $A/\mathfrak{p}$-algebra. Since $A$ satisfies $\PP\textnormal{-Q0}$, $\PP(A/\mathfrak{p})$ contains a nonempty open subset of $\Spec(A/\mathfrak{p})$. Hence, we can take an element $f \in A \setminus \mathfrak{p}$ such that $A_f/\mathfrak{p}A_f$ satisfies $\PP$. Then, we have:
    \begin{align*}
        (B/\mathfrak{q})_f &= (A[x_1,\ldots, x_n]/I)_f \otimes_{A_f} A_f/\mathfrak{p}A_f \\
        &= (A_f/\mathfrak{p}A_f)[x_1,\ldots, x_n]/(I_f). 
    \end{align*}
    By the stability of $\PP$ under taking polynomial extensions, $(A_f/\mathfrak{p}A_f)[x_1,\ldots, x_n]$ satisfies $\PP$. Since $\PP$ satisfies \NC{} and \QC{}, $\PP((B/\mathfrak{q})_f)$ is an open subset of $\Spec((B/\mathfrak{q})_f)$ by Proposition \ref{prop:NCandQC}. Hence, $\PP(B/\mathfrak{q})$ contains a nonempty open subset of $\Spec(B/\mathfrak{q})$ because we have:
    \begin{align*}
        \emptyset \neq \PP((B/\mathfrak{q})_f) = \PP(B/\mathfrak{q}) \cap D_{B/\mathfrak{q}}(f) \subset \PP(B/\mathfrak{q}) \subset \Spec(B/\mathfrak{q}).
    \end{align*}
    Since $\mathfrak{q} \in \Spec(B)$ is arbitrary, \NC{} shows that $\PP(B/\mathfrak{q})$ is open in $\Spec(B/\mathfrak{q})$ for every $\mathfrak{q} \in \Spec(B)$. Hence, $A$ satisfies $\PP\textnormal{-Q2}$.
\end{proof}

The summary of implications is as follows. Dashed arrows mean that these implications hold under the assumptions of the labeled numbers below. 

\[\begin{tikzcd}
	{\mathbb{P}\text{-2}} & {\mathbb{P}\text{-Q2}} & {\mathbb{P}\text{-Q1}} & {\mathbb{P}\text{-Q0}} \\
	{\mathbb{P}\text{-1}} &&& {\mathbb{P}}
	\arrow[Rightarrow, from=1-1, to=1-2]
	\arrow[Rightarrow, from=1-1, to=2-1]
	\arrow["{(4)}"', shift right, curve={height=12pt}, Rightarrow, dashed, from=1-2, to=1-1]
	\arrow[Rightarrow, from=1-2, to=1-3]
	\arrow["{(3)}"', shift right, curve={height=12pt}, Rightarrow, dashed, from=1-3, to=1-2]
	\arrow["{(1)}", Rightarrow, dashed, from=1-3, to=1-4]
	\arrow["{(2)}"', shift right, curve={height=12pt}, Rightarrow, dashed, from=1-4, to=1-3]
	\arrow["{(5)}"', curve={height=-6pt}, Rightarrow, dashed, from=1-4, to=2-1]
	\arrow["{(6)}"', Rightarrow, dashed, from=2-4, to=1-4]
	\arrow[Rightarrow, from=2-4, to=2-1]
\end{tikzcd}\]

\begin{enumerate}
    \item Fields satisfy $\PP$. 
    \item $\PP$ satisfies \NC{}.
    \item Fields satisfy $\PP$ and $\PP$ satisfies \NC{}, \QC{} and stability under taking polynomial extensions.
    \item $\PP$ satisfies \NC{}.
    \item $\PP$ satisfies \NC{}. This implication is just the definition of \NC{}.
    \item $\PP$ satisfies \QC{}. This implication is just the definition of \QC{}.
\end{enumerate}

\section{Conditions for the Nagata criterion}\label{sec:conditions-for-NC}

In this section, we give a necessary and sufficient condition for a property $\PP$ of local rings to satisfy \NC{}. First, we define the following conditions for a property $\PP$ of Noetherian local rings.

\begin{definition}\label{def:conditions-for-NC}
    We define the following conditions for a property $\PP$ of
    Noetherian local rings:
    \begin{enumerate}
        \item[(A-1)] $\PP$ is stable under localization, i.e., for any Noetherian ring $A$ and any prime ideals $\mathfrak{q} \subseteq \mathfrak{p}$ of $A$, if $A_{\mathfrak{p}}$ satisfies $\PP$, then $A_{\mathfrak{q}}$ satisfies $\PP$. 
        \item[(A-2)] For any Noetherian ring $A$ and any $P \in \PP(A)$, set $n=\height(P)$. Then there exist elements $x_1,\ldots,x_n \in P$ such that, setting $I=(x_1,\ldots,x_n)$, the following conditions hold:
        \begin{enumerate}
            \item[{(A-2-\makebox[1.2em][c]{i})}] The sequence $x_1/1,\ldots,x_n/1$ is an $A_P$-regular sequence, and
            \begin{align*}
                \sqrt{IA_P}=PA_P.
            \end{align*}
            \item[{(A-2-\makebox[1.2em][c]{ii})}] The prime ideal $P/I$ belongs to $\PP(A/I)$; equivalently, $A_P/IA_P$ satisfies $\PP$.
            \item[{(A-2-\makebox[1.2em][c]{iii})}] There exists an open neighbourhood $W$ of $P$ in $\Spec(A)$ such that, for any $Q \in W \cap V(P)$, if $A_Q/IA_Q$ satisfies $\PP$, then $A_Q$ satisfies $\PP$.
        \end{enumerate}
        \item[(A-3)] Fix a Noetherian ring $A$ satisfying $\PP$-Q0 and any prime ideal $\mathfrak{p} \in \PP(A)$ such that $\mathfrak{p}^r=0$ for some integer $r \geq 1$. Moreover, we assume $\mathfrak{p}^i/\mathfrak{p}^{i+1}$ is a free $A/\mathfrak{p}$-module with finite rank for all $i = 0,\ldots,r-1$. Then there exists an open neighbourhood $U$ of $\mathfrak{p}$ in $\Spec(A)$ such that, for any $\mathfrak{q} \in U$, if $A_{\mathfrak{q}}/\mathfrak{p}A_{\mathfrak{q}}$ satisfies $\PP$, then $A_{\mathfrak{q}}$ satisfies $\PP$. 
    \end{enumerate}
\end{definition}

Almost all commutative algebraists will think that (A-1) is natural. (A-2) seems to be technical, but this is a natural generalization of the condition that a property $\PP$ of local rings is stable under taking quotient by a regular sequence. If we consider a condition for $\PP$ such that for every Noetherian local ring $A$ and every regular sequence $x_1,\ldots,x_n$ in $A$, 
\begin{align*}
    A \text{ satisfies } \PP \Longleftrightarrow A/(x_1,\ldots,x_n) \text{ satisfies } \PP,
\end{align*}
then we can see that regularity does not satisfy $\Longrightarrow$. Therefore, we need to consider a condition weaker than the one above, such as (A-2). (A-3) is a technical condition. Geometrically, (A-3) is a condition that allows us to lift the property $\PP$ from a closed subscheme defined by a nilpotent ideal to an open neighbourhood of the subscheme.

We note some remarks on the above conditions.

\begin{remark}\label{rem:A2-implies-CM}
    We note the following remarks. 
    \begin{enumerate}
        \item The conditions in Definition \ref{def:conditions-for-NC} are designed for studying \NC{}. Hence, when studying \NC{}, we may assume that the Noetherian rings occurring in (A-1), (A-2), and (A-3) satisfy $\PP$-Q0. 
        \item Condition \textup{(A-2)} implies that every Noetherian local ring satisfying $\PP$ is Cohen--Macaulay. Indeed, let $(A,\mathfrak{m})$ be a Noetherian local ring satisfying $\PP$. Applying \textup{(A-2)} to $\mathfrak{m} \in \PP(A)$, we obtain an $A$-regular sequence of length
            \begin{align*}
                \height(\mathfrak{m})=\dim(A).
            \end{align*}
            It follows that
            \begin{align*}
                \depth(A) \geq \dim(A).
            \end{align*}
            Since the reverse inequality always holds, we have
            \begin{align*}
                \depth(A) = \dim(A).
            \end{align*}
            Therefore, $A$ is Cohen--Macaulay.
        \item In condition (A-3), $A$ is normally flat along $V(\mathfrak{p})$. In other words, 
            \begin{align*}
                \operatorname{gr}_{\mathfrak{p}}(A) = \bigoplus_{i \geq 0} \mathfrak{p}^i/\mathfrak{p}^{i+1} = \left( A/\mathfrak{p} \right) \oplus \left( \mathfrak{p}/\mathfrak{p}^2 \right) \oplus \cdots \oplus \left( \mathfrak{p}^{r-1}/\mathfrak{p}^r \right)
            \end{align*}
            is a flat $A/\mathfrak{p}$-module. Normal flatness seems to have been introduced by Hironaka in \cite[I. Ch. 0. \S 4. Definition 1]{Hir64} to study the resolution of singularities. 
    \end{enumerate}    
\end{remark}

We give a necessary and sufficient condition for a property $\PP$ of Noetherian local rings to satisfy \NC{} using Lemma \ref{lem:nagatatop}. 

\begin{proposition}\label{prop:conditions-imply-NC}
    Let $\PP$ be a property of Noetherian local rings. Then, the following statements hold:
    \begin{enumerate}
        \item If $\PP$ satisfies \NC{}, then $\PP$ satisfies \textup{(A-3)}. 
        \item If $\PP$ satisfies \textup{(A-1)}, \textup{(A-2)}, and \textup{(A-3)}, then $\PP$ satisfies \NC{}.
    \end{enumerate}
\end{proposition}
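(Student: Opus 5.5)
For (1), I will assume \NC{} and fix $A$, $\mathfrak{p}$, $r$ as in (A-3). Since $A$ satisfies $\PP$-Q0, \NC{} makes $\PP(A)$ open in $\Spec(A)$. As $\mathfrak{p}\in\PP(A)$, I will take $U=\PP(A)$. This is an open neighbourhood of $\mathfrak{p}$, and every $\mathfrak{q}\in U$ already has $A_{\mathfrak{q}}$ satisfying $\PP$, so the implication required in (A-3) holds trivially. The normal flatness hypothesis is not needed in this direction.

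For (2), I will assume (A-1), (A-2), (A-3) and let $A$ satisfy $\PP$-Q0. The goal is to verify the two conditions of Lemma \ref{lem:nagatatop} for $U=\PP(A)$.

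Condition (1) of the lemma is exactly (A-1).

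For condition (2), fix $P\in\PP(A)$ with $n=\height(P)$, and choose $x_1,\ldots,x_n$ and $I$ as in (A-2). By (A-2-iii) there is a neighbourhood $W$ of $P$ on which $\PP$ lifts from $A_Q/IA_Q$ to $A_Q$ for $Q\in V(P)$. So it suffices to find a nonempty open subset of $V(P)\cap W$ consisting of primes $Q$ with $(A/I)_Q$ satisfying $\PP$.

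Since $\sqrt{IA_P}=PA_P$, I will shrink to some $D(f)$ with $f\notin P$ so that $P$ is the unique minimal prime of $I$ there. Then $B=(A/I)_f$ has a nilpotent prime $\bar P$, and $B$ satisfies $\PP$-Q0 by Proposition \ref{prop:Q0preserved}. To get normal flatness I use generic freeness: each $\bar P^i/\bar P^{i+1}$ is a finite $B/\bar P$-module over a domain, so after inverting one more element $g\notin P$ all these quotients become free of finite rank. The hypotheses of (A-3) now hold for $B_g$ and $\bar P$; in particular $\bar P\in\PP(B_g)$ by (A-2-ii).

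Applying (A-3) gives a neighbourhood $U'$ of $\bar P$ on which $\PP$ lifts from $B_{\mathfrak q}/\bar PB_{\mathfrak q}=(A/P)_Q$ to $B_{\mathfrak q}$. By $\PP$-Q0, the set $\PP(A/P)$ contains a nonempty open subset of $V(P)$. Its intersection with $U'$ and $W$ is nonempty, because $V(P)$ is irreducible with generic point $P$ and each of these open sets contains $P$. Every $Q$ in this intersection then satisfies $\PP$ for $A_Q/IA_Q$, and hence for $A_Q$.

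I expect the main obstacle to be this reduction step. The delicate points are that $\PP$-Q0 must pass to $B_g$ via Proposition \ref{prop:Q0preserved}, that $\bar P$ must remain in $\PP(B_g)$ after localization, and that the various open sets must all be compared inside $V(P)$. The generic freeness argument is standard but needs to be stated carefully.
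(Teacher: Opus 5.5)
Your proposal is correct and follows essentially the same route as the paper. In (1) you take $U=\PP(A)$, where the paper intersects it with an extra open set $O$ that is not actually needed. In (2) you verify the two conditions of Lemma \ref{lem:nagatatop} in the same way: pass through (A-2) to $A/I$, localize so that $P$ becomes nilpotent, use generic freeness, and intersect the neighbourhoods coming from (A-3), (A-2-iii), and $\PP$-Q0 inside the irreducible set $V(P)$. Your bookkeeping with a single localization $D(fg)$ is slightly cleaner than the paper's successive replacements of $A$, and it rightly skips making $x_1,\ldots,x_n$ globally regular, which the paper's argument never uses.
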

\begin{proof}
    (1) Let $A$ be a Noetherian ring satisfying $\PP$-Q0 and $\mathfrak{p} \in \PP(A)$ such that $\mathfrak{p}^r=0$ for some integer $r \geq 1$. Moreover, we assume $\mathfrak{p}^i/\mathfrak{p}^{i+1}$ is a free $A/\mathfrak{p}$-module with finite rank for all $i = 0,\ldots,r-1$. We have to show that there exists an open neighbourhood $U$ of $\mathfrak{p}$ in $\Spec(A)$ such that, for any $\mathfrak{q} \in U$, if $A_{\mathfrak{q}}/\mathfrak{p}A_{\mathfrak{q}}$ satisfies $\PP$, then $A_{\mathfrak{q}}$ satisfies $\PP$. Since $A$ satisfies $\PP$-Q0, we have that $\PP(A/\mathfrak{p})$ contains a nonempty open subset of $\Spec(A/\mathfrak{p})$. Hence, there exists an open subset $O$ of $\Spec(A)$ such that
    \begin{align*}
        \emptyset \neq O \cap V(\mathfrak{p}) \subset \PP(A/\mathfrak{p}).
    \end{align*}
    Note that $\mathfrak{p} \in O \cap V(\mathfrak{p})$. Since $\PP$ satisfies \NC{}, $\PP(A)$ is an open subset of $\Spec(A)$. Therefore, we put $U = O \cap \PP(A)$. Then, we find that $\mathfrak{p} \in U \neq \emptyset$. Moreover, for any $\mathfrak{q} \in U \cap V(\mathfrak{p})$, we have $\mathfrak{q} \in \PP(A/\mathfrak{p})$ and $\mathfrak{q} \in \PP(A)$. In particular, the implication $A_{\mathfrak{q}}/\mathfrak{p}A_{\mathfrak{q}} \text{ satisfies } \PP \Longrightarrow A_{\mathfrak{q}} \text{ satisfies } \PP$ holds for any $\mathfrak{q} \in U \cap V(\mathfrak{p})$. Hence, we conclude that $\PP$ satisfies (A-3).

    (2) Let $A$ be a Noetherian ring which satisfies $\PP$-Q0, i.e., $\PP(A/\mathfrak{p})$ contains a nonempty open subset of $\Spec(A/\mathfrak{p})$ for every $\mathfrak{p} \in \Spec(A)$. We show that $\PP(A)$ is open in $\Spec(A)$.

    By (A-1), the subset $\PP(A)$ is stable under localization. Hence, it is enough to show that, for every $P \in \PP(A)$, the subset $\PP(A)$ contains a nonempty open subset of $V(P)$ by Lemma \ref{lem:nagatatop}.

    First, if we show that $\PP(A_a)$ contains a nonempty open subset of $V(PA_a)$ for some $a \in A \setminus P$, then $\PP(A)$ contains a nonempty open subset of $V(P)$. Indeed, if we have it, then there exists an open subset $U$ of $\Spec(A)$ such that
    \begin{align*}
        \emptyset \neq V(PA_a) \cap U \subset \PP(A_a).
    \end{align*}
    This means that
    \begin{align*}
        \emptyset \neq V(P) \cap D(a) \cap U \subset \PP(A_a) = \PP(A) \cap D(a) \subset \PP(A).
    \end{align*}
    Hence, $\PP(A)$ contains a nonempty open subset of $V(P)$. Moreover, we have:
    \begin{align*}
        (A_a)_{PA_a} = A_P.
    \end{align*}
    Thus, we have $PA_a \in \PP(A_a)$. Therefore, we may replace $A$ by $A_a$ for some $a \in A \setminus P$. Notice that $A_a$ also satisfies $\PP$-Q0 by Proposition \ref{prop:Q0preserved}. 

    Fix $P \in \PP(A)$. By (A-2), there exist elements $x_1,\ldots,x_n \in P$, where $n = \operatorname{ht}(P)$, satisfying (A-2-i), (A-2-ii), and (A-2-iii). Put $I = (x_1,\ldots,x_n)$. By (A-2-i), the sequence $x_1/1,\ldots,x_n/1$ is an $A_P$-regular sequence. Hence, there exists $a_1 \in A \setminus P$ such that the images of $x_1,\ldots,x_n$ form an $A_{a_1}$-regular sequence. Moreover, (A-2-i) gives $\sqrt{IA_P} = PA_P$. Hence, there exists $a_2 \in A \setminus P$ such that $\sqrt{IA_{a_2}} = PA_{a_2}$. By (A-2-iii), there exists an open neighbourhood $W$ of $P$ in $\Spec(A)$ such that, for every $Q \in W \cap V(P)$, if $A_Q/IA_Q$ satisfies $\PP$, then $A_Q$ satisfies $\PP$. Hence, there exists $a_3 \in A \setminus P$ such that, for every $Q \in D(a_3) \cap V(P) \subset W \cap V(P)$, if $A_Q/IA_Q$ satisfies $\PP$, then $A_Q$ satisfies $\PP$. Replacing $A$ by $A_a$, where $a = a_1 a_2 a_3$, we may assume that the following conditions hold:
    \begin{enumerate}
        \item[(2-1)] $x_1,\ldots,x_n$ form an $A$-regular sequence and $\sqrt{IA} = P$,
        \item[(2-2)] $P/I \in \PP(A/I)$; equivalently, $A_P/IA_P$ satisfies $\PP$,
        \item[(2-3)] for every $Q \in V(P)$, if $A_Q/IA_Q$ satisfies
        $\PP$, then $A_Q$ satisfies $\PP$.
    \end{enumerate}
    
    If we show that $\PP(A/I)$ contains a nonempty open subset of $V(P/I)$, then $\PP(A)$ contains a nonempty open subset of $V(P)$. Indeed, if we have it, then there exists an open subset $O$ of $\Spec(A/I)$ such that
    \begin{align*}
        \emptyset \neq O \cap V(P/I) \subset \PP(A/I). 
    \end{align*}
    This means that
    \begin{align*}
        \emptyset \neq O \cap V(P) \subset \PP(A/I) \cap V(P) \subset \PP(A).
    \end{align*}
    Note that the last inclusion follows from (2-3). Thus, we may replace $A$ by $A/I$. Notice that $A/I$ also satisfies $\PP$-Q0 by Proposition \ref{prop:Q0preserved}. In this case, we have $P^r = 0$ for some integer $r \geq 1$, $P \in \PP(A)$. Moreover, by generic freeness, we can assume $P^i/P^{i+1}$ is a free $A/P$-module with finite rank for all $i = 0,\ldots,r-1$ by taking some neighborhood of $P$. Thus, by (A-3), there exists an open neighbourhood $U$ of $P$ in $\Spec(A)$ such that, for every $Q \in U$, if $A_Q/PA_Q$ satisfies $\PP$, then $A_Q$ satisfies $\PP$. Since $A$ satisfies $\PP$-Q0, $\PP(A/P)$ contains a nonempty open subset of $\Spec(A/P)$. Hence, there exists an open subset $O$ of $\Spec(A)$ such that
    \begin{align*}
        \emptyset \neq O \cap V(P) \subset \PP(A/P).
    \end{align*}
    Hence, we have
    \begin{align*}
        \emptyset \neq U \cap O \cap V(P) \subset \PP(A/P) \cap U \subset \PP(A). 
    \end{align*}
    Note that the last inclusion follows from (A-3). Therefore, $\PP(A)$ contains a nonempty open subset of $V(P)$. Hence, $\PP(A)$ is open in $\Spec(A)$.
\end{proof}

Let $\PP$ be a property of Noetherian local rings. Whether $\PP$ satisfies \NC{} or not depends on the property $\PP$. Hence, the proof of \NC{} is different for each property $\PP$. In this section, we prove \NC{} for $\mathsf{Reg},\ \mathsf{CI},\ \mathsf{Gor},\ \mathsf{CM}$ simultaneously using Proposition \ref{prop:conditions-imply-NC}. However, it is easy to show that all four properties satisfy (A-1) and (A-2). Hence, we check that they satisfy (A-3) in Definition \ref{def:conditions-for-NC} to show that they satisfy \NC{}. Indeed, this verification follows almost directly from earlier work. We summarize the result in the following proposition.

\begin{proposition}\label{prop:known_singularity_NC}
    The properties $\mathsf{Reg},\ \mathsf{CI},\ \mathsf{Gor},\ \mathsf{CM}$ of Noetherian local rings satisfy \textup{(A-3)}.
\end{proposition}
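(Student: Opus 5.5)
The most economical route is through Proposition~\ref{prop:conditions-imply-NC}~(1), which says that every property satisfying \NC{} also satisfies (A-3). By Remark~\ref{rem:NC}, each of the four properties already satisfies \NC{}:
\begin{itemize}
\item[] $\mathsf{Reg}$ by \cite{Nag59} and \cite[Theorem 24.4]{Mat86};
\item[] $\mathsf{CI}$ by \cite[Theorem 3.1]{GM78};
\item[] $\mathsf{Gor}$ by \cite[Theorem 1.4]{GM78} and \cite[Theorem 24.6]{Mat86};
\item[] $\mathsf{CM}$ by \cite{MV77} and \cite[Theorem 24.5]{Mat86}.
\end{itemize}
So I would simply apply Proposition~\ref{prop:conditions-imply-NC}~(1) to each of them. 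Concretely, take $A$ satisfying $\PP$-Q0 and $\mathfrak{p}\in\PP(A)$ with $\mathfrak{p}^r=0$ and free graded pieces. By \NC{}, $\PP(A)$ is open, and $U=O\cap\PP(A)$ works as in that proof. This gives the statement immediately, and it matches the remark that the verification ``follows almost directly from earlier work.''

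This route has a defect: the section aims to reprove \NC{} for these four properties via Proposition~\ref{prop:conditions-imply-NC}~(2), so citing \NC{} is circular for that purpose. I would therefore also try a direct verification of (A-3), at least for some cases.

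For $\mathsf{Reg}$ the direct argument is easy. Since $\mathfrak{p}$ is nilpotent, $A_{\mathfrak{p}}$ has dimension $0$, so regularity of $A_{\mathfrak{p}}$ means $A_{\mathfrak{p}}$ is a field, i.e.\ $\mathfrak{p}A_{\mathfrak{p}}=0$. Since $\mathfrak{p}$ is finitely generated, there is $a\notin\mathfrak{p}$ with $\mathfrak{p}A_a=0$. On $U=D(a)$ we then have $A_{\mathfrak{q}}/\mathfrak{p}A_{\mathfrak{q}}=A_{\mathfrak{q}}$, so (A-3) is tautological.

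For $\mathsf{CM}$, $\mathsf{Gor}$ and $\mathsf{CI}$ the direct idea is to use normal flatness. After shrinking, one would show that each $A_{\mathfrak{q}}$ with $\mathfrak{q}\in V(\mathfrak{p})$ behaves like a flat local extension of $A_{\mathfrak{q}}/\mathfrak{p}A_{\mathfrak{q}}$ with Artinian closed fibre $A_{\mathfrak{p}}$. One would then transfer $\mathsf{CM}$, $\mathsf{Gor}$ or $\mathsf{CI}$ from base and fibre to $A_{\mathfrak{q}}$, using the standard ascent results for flat local homomorphisms.

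Here is how I would attempt that transfer. Filter $A_{\mathfrak{q}}$ by the powers $\mathfrak{p}^iA_{\mathfrak{q}}$, whose successive quotients are free over $B=A_{\mathfrak{q}}/\mathfrak{p}A_{\mathfrak{q}}$. For $\mathsf{CM}$, a system of parameters of $B$, lifted to $A_{\mathfrak{q}}$, is regular on each graded piece, and hence on $A_{\mathfrak{q}}$; this gives depth. For $\mathsf{Gor}$ and $\mathsf{CI}$, after quotienting by such a lifted regular sequence one reduces to the Artinian case. There one would need to compare the socle of $A_{\mathfrak{q}}$, or its deviations, with those of $A_{\mathfrak{p}}$ and $B$ by a generic-point and openness argument.

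The main obstacle is the absence of an actual ring map $B\to A_{\mathfrak{q}}$, since normal flatness only controls $\operatorname{gr}_{\mathfrak{p}}(A)$. This is exactly why I would fall back on the citation route described first for $\mathsf{CI}$, $\mathsf{Gor}$ and $\mathsf{CM}$.
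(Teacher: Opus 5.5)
Your proof is correct, but it runs in the opposite direction from the paper's. You obtain (A-3) from \NC{} via Proposition~\ref{prop:conditions-imply-NC}(1) and the classical theorems; indeed $U=\PP(A)$ itself works once \NC{} makes it open.

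The paper instead verifies (A-3) directly, property by property:
\begin{itemize}
\item For $\mathsf{Reg}$, it uses freeness of $\mathfrak p/\mathfrak p^2$ and the fact that $A_{\mathfrak p}$ is a field to get rank $0$, hence $\mathfrak p=\mathfrak p^2=\cdots=\mathfrak p^r=0$. Your localization argument, which produces $a\notin\mathfrak p$ with $\mathfrak pA_a=0$, is an equally valid variant and does not even need normal flatness.
\item For $\mathsf{CI}$, it shrinks to some $D(a)$ on which the Andr\'e--Quillen modules $H_i(A,A/\mathfrak p,A/\mathfrak p)$ are free for $i\le 2$ and vanish for $i=3,4$. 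This is possible since $A_{\mathfrak p}$ is a complete intersection. It then invokes \cite[Lemma 3.2]{GM78}.
\item For $\mathsf{Gor}$ and $\mathsf{CM}$, it points to the lifting step inside the proofs of \cite[Theorems 24.5, 24.6]{Mat86}.
\end{itemize}

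Your route is shorter and airtight as a proof of the statement. However, as you noticed, it makes the announced use of the proposition circular: the section wants to rederive \NC{} for these four properties from (A-1)--(A-3) via Proposition~\ref{prop:conditions-imply-NC}(2). It also does not exhibit the lifting mechanism that distinguishes these properties from $\mathsf{CI}_{\le c}$. The paper's route draws only on the local lifting ingredients of the earlier work, not on the \NC{} theorems themselves.

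Two remarks on your direct attempts. For $\mathsf{CM}$, you abandoned an argument that is already complete, because no ring map $B\to A_{\mathfrak q}$ is needed:
\begin{itemize}
\item a system of parameters of the Cohen--Macaulay ring $B=A_{\mathfrak q}/\mathfrak pA_{\mathfrak q}$ is regular on each free piece $\mathfrak p^iA_{\mathfrak q}/\mathfrak p^{i+1}A_{\mathfrak q}$;
\item hence it is $A_{\mathfrak q}$-regular by Lemma~\ref{lem:normallyflat};
\item and $\dim A_{\mathfrak q}=\dim B$ because $\mathfrak p$ is nilpotent.
\end{itemize}
This is exactly Matsumura's argument, and it works at every $\mathfrak q$ without shrinking. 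For $\mathsf{CI}$, the paper's answer to your ``missing ring map'' obstacle is to work with the surjection $A\to A/\mathfrak p$ itself and control its Andr\'e--Quillen homology generically, rather than seeking a flat map in the other direction.
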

\begin{proof}
    ($\mathsf{Reg}$): Let $A$ be a Noetherian ring and $\mathfrak{p} \in \mathsf{Reg}(A)$ with $\mathfrak{p}^r = 0$ for some $r > 0$. Moreover, we assume $\mathfrak{p}/\mathfrak{p}^2$ is a free $A/\mathfrak{p}$-module with finite rank. Then, we have:
    \begin{align*}
        \mathfrak{p}/\mathfrak{p}^2 = (A/\mathfrak{p})^{\oplus e}
    \end{align*}
    for some integer $e \geq 0$. Since $A_{\mathfrak{p}}$ is zero-dimensional and regular, this ring is a field. Hence, tensoring the above equality with $A_{\mathfrak{p}}$ over $A$ gives $e=0$. Therefore, $\mathfrak{p} = 0$. We can easily see that (A-3) holds in this case.

    ($\mathsf{CI}$): We use Andr\'{e}--Quillen homology. Let $A$ be a Noetherian ring and $\mathfrak{p} \in \mathsf{CI}(A)$ with $\mathfrak{p}^r = 0$ for some $r > 0$. Moreover, we assume $\mathfrak{p}^i/\mathfrak{p}^{i+1}$ is a free $A/\mathfrak{p}$-module with finite rank for all $i = 0,\ldots,r-1$. We put
    \begin{align*}
        H_i = H_i(A, A/\mathfrak{p}, A/\mathfrak{p}) \quad (i = 0, 1, 2, 3, 4). 
    \end{align*}
    Since $A_{\mathfrak{p}}$ is a complete intersection, we have
    \begin{align*}
        H_i \otimes_A A_{\mathfrak{p}} = H_i(A_{\mathfrak{p}}, A_{\mathfrak{p}}/\mathfrak{p}A_{\mathfrak{p}}, A_{\mathfrak{p}}/\mathfrak{p}A_{\mathfrak{p}}) = 0 \quad (i = 3, 4).
    \end{align*}
    Moreover, $H_i$ are finitely generated $A/\mathfrak{p}$-modules. Hence, we can take an element $a \in A \setminus \mathfrak{p}$ such that
    \begin{align*}
        H_i \otimes_{A/\mathfrak{p}} (A/\mathfrak{p})_a \text{ are free } (A/\mathfrak{p})_a \text{-modules } (i=0,1,2) \text{ and } H_i \otimes_A A_a = 0 \quad (i = 3, 4).
    \end{align*}
    Let $U = D(a)$. Then, for any $\mathfrak{q} \in U$, we have
    \begin{align*}
        \begin{cases}
            H_i(A_{\mathfrak{q}}, A_{\mathfrak{q}}/\mathfrak{p}A_{\mathfrak{q}}, A_{\mathfrak{q}}/\mathfrak{p}A_{\mathfrak{q}}) \text{ is a free } A_{\mathfrak{q}}/\mathfrak{p}A_{\mathfrak{q}} \text{-module } (i=0,1,2), \\
            H_i(A_{\mathfrak{q}}, A_{\mathfrak{q}}/\mathfrak{p}A_{\mathfrak{q}}, A_{\mathfrak{q}}/\mathfrak{p}A_{\mathfrak{q}}) = 0 \quad (i = 3, 4).
        \end{cases}
    \end{align*}
    Hence, by \cite[Lemma 3.2]{GM78}, we conclude that $A_{\mathfrak{q}}/\mathfrak{p}A_{\mathfrak{q}}$ is a complete intersection if and only if $A_{\mathfrak{q}}$ is a complete intersection. Therefore, we conclude that $\mathsf{CI}$ satisfies (A-3).

    ($\mathsf{Gor}$): See the proof of \cite[Theorem 24.6]{Mat86}.

    ($\mathsf{CM}$): See the proof of \cite[Theorem 24.5]{Mat86}.
\end{proof}

\section{Complete intersections of bounded codimension}\label{sec:ci-bounded}

In this section, we consider the property $\PP = \mathsf{CI}_{\leq c}$ of Noetherian local rings (Definition \ref{def:cicodim}). This class of singularities stands between the property $\mathsf{Reg}$ and $\mathsf{CI}$. Hence, we can expect that $\mathsf{CI}_{\leq c}$ also satisfies good properties like $\mathsf{Reg}$ and $\mathsf{CI}$. In fact, we can show various properties of $\mathsf{CI}_{\leq c}$ in this section. However, we will see that $\mathsf{CI}_{\leq c}$ does not satisfy \NC{} in general. Therefore, as we see in the previous section, \QC{} is less useful for $\mathsf{CI}_{\leq c}$, since Proposition \ref{prop:NCandQC} cannot be applied to prove openness of its locus. However, we can show that $\mathsf{CI}_{\leq c}(A)$ is an open subset of $\Spec(A)$ for every $\mathsf{Reg}$-Q0 ring $A$, just as for the properties $\mathsf{Reg},\ \mathsf{CI},\ \mathsf{Gor},\ \mathsf{CM}$. 

\subsection{Complete intersections of bounded codimension}\label{subsec:ci-bounded}

We study basic properties of complete intersections of bounded codimension. Let $(A, \mathfrak{m})$ be a Noetherian local ring. For any regular sequence $x_1,\ldots,x_n$ in $A$, the following is well-known.
\begin{align*}
    A \text{ is a complete intersection } \Longleftrightarrow A/(x_1,\ldots,x_n) \text{ is a complete intersection}.
\end{align*}
However, the analogous equivalence does not hold for complete intersections of bounded codimension. In fact, we have the following proposition.

\begin{proposition}\label{prop:cibounded_regseq}
    Let $(A, \mathfrak{m}, k)$ be a Noetherian local ring and $x_1,\ldots,x_n$ be a regular sequence in $A$. Let $\overline{x_1},\ldots,\overline{x_n}$ be the images of $x_1,\ldots,x_n$ in $\mathfrak{m}/\mathfrak{m}^2$. We put 
    \begin{align*}
        s = \dim_k(\operatorname{span}_k\{\overline{x_1},\ldots,\overline{x_n}\}) \leq n. 
    \end{align*}
    Then, we have
    \begin{align*}
        \embdim(A) - \dim(A) = \embdim(A/(x_1,\ldots,x_n)) - \dim(A/(x_1,\ldots,x_n)) + s - n.
    \end{align*}
\end{proposition}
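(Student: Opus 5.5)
Write $B = A/(x_1,\ldots,x_n)$ and $\mathfrak{n} = \mathfrak{m}/(x_1,\ldots,x_n)$ for its maximal ideal. The formula comes from two independent computations: one for the dimension and one for the embedding dimension. Comparing them gives the statement.

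\emph{Dimension.} Since $x_1,\ldots,x_n$ is an $A$-regular sequence contained in $\mathfrak{m}$, each $x_i$ is a nonzerodivisor on $A/(x_1,\ldots,x_{i-1})$. By \cite[Theorem 13.6]{Mat86}, this forces $\dim$ to drop by exactly one at each step, since a nonzerodivisor avoids all minimal primes. Hence
\begin{align*}
    \dim(B) = \dim(A) - n.
\end{align*}

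\emph{Embedding dimension.} We have $\mathfrak{n}/\mathfrak{n}^2 \cong \mathfrak{m}/(\mathfrak{m}^2 + (x_1,\ldots,x_n))$. The image of $(x_1,\ldots,x_n)$ in $\mathfrak{m}/\mathfrak{m}^2$ is $(x_1,\ldots,x_n)+\mathfrak{m}^2$ modulo $\mathfrak{m}^2$. This image is exactly the $k$-span of $\overline{x_1},\ldots,\overline{x_n}$, because any $\sum a_i x_i$ agrees modulo $\mathfrak{m}^2$ with $\sum \bar a_i x_i$, where $\bar a_i \in k$ is the residue of $a_i$. Thus there is an exact sequence of $k$-vector spaces
\begin{align*}
    0 \longrightarrow \operatorname{span}_k\{\overline{x_1},\ldots,\overline{x_n}\} \longrightarrow \mathfrak{m}/\mathfrak{m}^2 \longrightarrow \mathfrak{n}/\mathfrak{n}^2 \longrightarrow 0,
\end{align*}
which gives
\begin{align*}
    \embdim(B) = \embdim(A) - s.
\end{align*}

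\emph{Combining.} Subtracting the two formulas,
\begin{align*}
    \embdim(B) - \dim(B) = \embdim(A) - \dim(A) - s + n.
\end{align*}
Rearranging yields the claimed identity. The inequality $s \le n$ is immediate, since $s$ is the dimension of a space spanned by $n$ vectors.

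The only step needing care is identifying the kernel of $\mathfrak{m}/\mathfrak{m}^2 \to \mathfrak{n}/\mathfrak{n}^2$ with the $k$-span of the $\overline{x_i}$, rather than with some larger subspace. This holds because $(x_1,\ldots,x_n)\mathfrak{m} \subset \mathfrak{m}^2$. The rest of the argument is standard.
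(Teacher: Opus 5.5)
Your proposal is correct and follows the paper's proof. The paper likewise reads off $\embdim(A)=\embdim(A/(x_1,\ldots,x_n))+s$ from the surjection $\mathfrak{m}/\mathfrak{m}^2\twoheadrightarrow\mathfrak{m}/(\mathfrak{m}^2+(x_1,\ldots,x_n))$, and combines it with $\dim(A)=\dim(A/(x_1,\ldots,x_n))+n$ for a regular sequence; your explicit identification of the kernel with the $k$-span of the $\overline{x_i}$ spells out a step the paper leaves implicit.
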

\begin{proof}
    Considering the natural surjection
    \begin{align*}
        \mathfrak{m}/\mathfrak{m}^2 \twoheadrightarrow \mathfrak{m}/(\mathfrak{m}^2 + (x_1,\ldots,x_n)),
    \end{align*}
    we have $\dim_k(\mathfrak{m}/\mathfrak{m}^2) = \dim_k(\mathfrak{m}/(\mathfrak{m}^2 + (x_1,\ldots,x_n))) + s$. Hence, we have
    \begin{align*}
        \embdim(A) = \embdim(A/(x_1,\ldots,x_n)) + s.
    \end{align*}
    On the other hand, since $x_1,\ldots,x_n$ form a regular sequence in $A$, we have
    \begin{align*}
        \dim(A) = \dim(A/(x_1,\ldots,x_n)) + n.
    \end{align*}
    Thus, we have 
    \begin{align*}
        \embdim(A) - \dim(A) = \embdim(A/(x_1,\ldots,x_n)) - \dim(A/(x_1,\ldots,x_n)) + s - n.
    \end{align*}
\end{proof}

In particular, we have the following corollary.

\begin{corollary}\label{cor:hs_regular_deform}
    Let $(A, \mathfrak{m}, k)$ be a Noetherian local ring. Then, the following statements hold:
    \begin{enumerate}
        \item Let $c \geq 0$, $n \geq 1$ and $x_1, \ldots, x_n \in \mathfrak{m}$ a regular sequence in $A$. If $A/(x_1, \ldots, x_n)$ is $\mathsf{CI}_{\leq c}$, then $A$ is $\mathsf{CI}_{\leq c}$ for any $c \geq 0$. 
        \item In (1), if $x_1, \ldots, x_n$ satisfy $x_i \notin \mathfrak{m}^2 + (x_1, \ldots, x_{i-1})$ for all $i$, then the converse holds.
    \end{enumerate}
\end{corollary}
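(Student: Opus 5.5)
The plan is to combine Proposition~\ref{prop:cibounded_regseq} with the classical fact, recalled at the start of this subsection, that for a regular sequence $x_1,\ldots,x_n$ in $\mathfrak{m}$ the ring $A$ is a complete intersection if and only if $A/(x_1,\ldots,x_n)$ is. Both parts then reduce to comparing the codimensions of $A$ and of $\bar A = A/(x_1,\ldots,x_n)$.

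For (1), assume $\bar A$ is $\mathsf{CI}_{\leq c}$. Then $\bar A$ is a complete intersection, so $A$ is a complete intersection by the classical equivalence. For the codimension, Proposition~\ref{prop:cibounded_regseq} gives
\begin{align*}
    \embcodim(A) = \embcodim(\bar A) + s - n,
\end{align*}
where $s = \dim_k(\operatorname{span}_k\{\overline{x_1},\ldots,\overline{x_n}\}) \leq n$. Hence $\embcodim(A) \leq \embcodim(\bar A) \leq c$, and $A$ is $\mathsf{CI}_{\leq c}$.

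For (2), assume $A$ is $\mathsf{CI}_{\leq c}$ and that $x_i \notin \mathfrak{m}^2 + (x_1,\ldots,x_{i-1})$ for all $i$. Again $\bar A$ is a complete intersection by the classical equivalence, so it suffices to show $s = n$; Proposition~\ref{prop:cibounded_regseq} then gives $\embcodim(\bar A) = \embcodim(A) \leq c$.

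To prove $s = n$ we show that the images $\overline{x_1},\ldots,\overline{x_n}$ are linearly independent in $\mathfrak{m}/\mathfrak{m}^2$, arguing by contradiction. Suppose there is a nontrivial relation $\sum \lambda_i \overline{x_i} = 0$ with $\lambda_i \in k$, and let $j$ be the largest index with $\lambda_j \neq 0$. Lift each $\lambda_i$ to a unit or zero $a_i \in A$, with $a_j$ a unit. Then $a_j x_j \in \mathfrak{m}^2 + (x_1,\ldots,x_{j-1})$, and since $a_j$ is a unit, $x_j \in \mathfrak{m}^2 + (x_1,\ldots,x_{j-1})$. This contradicts the hypothesis, so $s = n$.

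No step is a real obstacle, since the statement is essentially a direct corollary of Proposition~\ref{prop:cibounded_regseq}. The only point needing care is this linear-independence argument: the hypothesis is phrased modulo $\mathfrak{m}^2 + (x_1,\ldots,x_{i-1})$ rather than directly as independence in $\mathfrak{m}/\mathfrak{m}^2$, which is why we take the maximal index $j$ and use that $a_j$ is a unit.
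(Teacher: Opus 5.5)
Your proposal is correct and follows the paper's proof. Part (1) is read off from Proposition \ref{prop:cibounded_regseq} using $s \leq n$. For (2), the paper likewise shows that the hypothesis forces the images of $x_1,\ldots,x_n$ in $\mathfrak{m}/\mathfrak{m}^2$ to be linearly independent, by taking the largest index with a unit coefficient. The paper also proves the reverse implication (linear independence implies $x_i \notin \mathfrak{m}^2 + (x_1,\ldots,x_{i-1})$); that direction is not needed for this corollary, but it is used later in Lemma \ref{lem:linearly_indep}.
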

\begin{proof}
    (1) is clear from Proposition \ref{prop:cibounded_regseq}. We show (2). It is enough to show that the following equivalence holds:
    \begin{itemize}
        \item[(i)] $x_i \notin \mathfrak{m}^2 + (x_1, \ldots, x_{i-1})$ for all $i \geq 1$.
        \item[(ii)] The images of $x_1, \ldots, x_n$ in $\mathfrak{m}/\mathfrak{m}^2$ are linearly independent over $k$.
    \end{itemize}
    (i) $\Longrightarrow$ (ii): Assume that the images of $x_1, \ldots, x_n$ in $\mathfrak{m}/\mathfrak{m}^2$ are not linearly independent over $k$. Then, there exist $a_1, \ldots, a_n \in A$ such that:
    \begin{align*}
        a_1 x_1 + \cdots + a_n x_n \in \mathfrak{m}^2,
    \end{align*}
    where $a_i \notin \mathfrak{m}$ for some $i$. Let $i_0$ be the largest integer such that $a_{i_0} \notin \mathfrak{m}$. Then, we have:
    \begin{align*}
        a_{i_0} x_{i_0} \in \mathfrak{m}^2 + (x_1, \ldots, x_{i_0-1}). 
    \end{align*}
    Since $a_{i_0}$ is a unit in $A$, we have:
    \begin{align*}
        x_{i_0} \in \mathfrak{m}^2 + (x_1, \ldots, x_{i_0-1}),
    \end{align*}
    which contradicts (i). Hence, the images of $x_1, \ldots, x_n$ in $\mathfrak{m}/\mathfrak{m}^2$ are linearly independent over $k$.

    (ii) $\Longrightarrow$ (i): Assume that $x_i \in \mathfrak{m}^2 + (x_1, \ldots, x_{i-1})$ for some $i$. Then, we have:
    \begin{align*}
        x_i = a_1 x_1 + \cdots + a_{i-1} x_{i-1} + b,
    \end{align*}
    where $a_1, \ldots, a_{i-1} \in A$ and $b \in \mathfrak{m}^2$. Hence, we have:
    \begin{align*}
        x_i - a_1 x_1 - \cdots - a_{i-1} x_{i-1} = b \in \mathfrak{m}^2.
    \end{align*}
    This contradicts (ii). Hence, $x_i \notin \mathfrak{m}^2 + (x_1, \ldots, x_{i-1})$ for all $i$.
\end{proof}

Hence, in general, we do not have the converse of the following implication:
\begin{align*}
    A/(x_1, \ldots, x_n) \text{ is } \mathsf{CI}_{\leq c} \Longrightarrow A \text{ is } \mathsf{CI}_{\leq c}.
\end{align*}
If we assume that the images of  $x_1, \ldots, x_n$ are linearly independent in $\mathfrak{m}/\mathfrak{m}^2$ over $k$, then we have the converse. Moreover, we want to take such a sequence as a system of parameters of $A$. Hence, we introduce the following definition.

\begin{definition}\label{def:linearly_indep}
    Let $(A, \mathfrak{m}, k)$ be a Noetherian local ring with $d = \dim(A)$. A system of parameters $x_1, \ldots, x_d \in \mathfrak{m}$ of $A$ is called a \emph{linearly independent system of parameters} if $x_i \notin \mathfrak{m}^2 + (x_1, \ldots, x_{i-1})$ for all $i \geq 1$.
\end{definition}

Let $(A, \mathfrak{m}, k)$ be a Noetherian local ring. In general, if $x_1, \ldots, x_d \in \mathfrak{m}$ form a regular sequence, then $x_1^{\nu_1}, \ldots, x_d^{\nu_d}$ also form a regular sequence for any positive integers $\nu_1, \ldots, \nu_d$. Under these circumstances, a linearly independent system of parameters is a ``good'' system of parameters from the viewpoint of embedding dimension. 

\begin{lemma}\label{lem:linearly_indep}
    Let $(A, \mathfrak{m}, k)$ be a Cohen--Macaulay local ring with $d = \dim(A)$. Then, there exists a linearly independent system of parameters (Definition \ref{def:linearly_indep}) $x_1, \ldots, x_d \in \mathfrak{m}$ of $A$.
\end{lemma}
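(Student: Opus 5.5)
We will build $x_1,\ldots,x_d$ by induction on $d$, using prime avoidance. At each step we pick an element that is simultaneously a parameter and not in $\mathfrak{m}^2$. Since $A$ is Cohen--Macaulay, every system of parameters is a regular sequence. Also, $A/(x_1,\ldots,x_i)$ is again Cohen--Macaulay of dimension $d-i$ whenever $x_1,\ldots,x_i$ is part of a system of parameters. This lets us pass to quotients.

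If $d=0$ there is nothing to prove, so assume $d\geq 1$. Let $\mathfrak{p}_1,\ldots,\mathfrak{p}_t$ be the minimal primes of $A$ with $\dim A/\mathfrak{p}_j = d$; since $A$ is Cohen--Macaulay, these are all the minimal (equivalently associated) primes. Because $d\geq 1$, we have $\mathfrak{m}\not\subseteq \mathfrak{p}_j$ for every $j$. We also have $\mathfrak{m}\neq\mathfrak{m}^2$, since otherwise Nakayama gives $\mathfrak{m}=0$ and $d=0$.

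We want $x_1\in\mathfrak{m}$ with $x_1\notin \mathfrak{m}^2\cup\mathfrak{p}_1\cup\cdots\cup\mathfrak{p}_t$. Prime avoidance in the form allowing at most two non-prime ideals (here only $\mathfrak{m}^2$ is non-prime) gives this: $\mathfrak{m}$ is contained in none of $\mathfrak{m}^2,\mathfrak{p}_1,\ldots,\mathfrak{p}_t$, hence not in their union. This prime-avoidance step is the one point needing care. If one prefers to avoid the non-prime version, argue directly. Choose $y\in\mathfrak{m}\setminus\mathfrak{m}^2$ and $z\in\mathfrak{m}^2\setminus\bigcup_j\mathfrak{p}_j$; the latter exists because $\mathfrak{m}^2\not\subseteq\mathfrak{p}_j$, as $\mathfrak{p}_j$ is prime and $\mathfrak{m}\not\subseteq\mathfrak{p}_j$. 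Then pick $x_1$ among $y$ and $y+z^k$ suitably via the standard avoidance argument. Note that $y+w\notin\mathfrak{m}^2$ for any $w\in\mathfrak{m}^2$, so adding elements of $\mathfrak{m}^2$ never spoils the first condition. This makes the argument a pure prime-avoidance problem for the coset $y+\mathfrak{m}^2$ against the primes $\mathfrak{p}_j$. That coset version is standard, Davis's lemma or \cite[Ex.~16.8]{Mat86}, since the ideal $\mathfrak{m}^2$ is not contained in any $\mathfrak{p}_j$.

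Then $x_1$ lies outside all minimal primes, so $\dim A/(x_1)=d-1$ and $x_1$ is a parameter. Moreover $A'=A/(x_1)$ is Cohen--Macaulay local with maximal ideal $\mathfrak{m}'=\mathfrak{m}/(x_1)$. By induction, $A'$ has a linearly independent system of parameters $\bar x_2,\ldots,\bar x_d$; lift these to $x_2,\ldots,x_d\in\mathfrak{m}$. Then $x_1,\ldots,x_d$ is a system of parameters of $A$. The condition $\bar x_i\notin \mathfrak{m}'^2+(\bar x_2,\ldots,\bar x_{i-1})$ in $A'$ translates, via $\mathfrak{m}'^2=(\mathfrak{m}^2+(x_1))/(x_1)$, to $x_i\notin\mathfrak{m}^2+(x_1,\ldots,x_{i-1})$. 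Together with $x_1\notin\mathfrak{m}^2$, this is exactly Definition \ref{def:linearly_indep}.

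The Cohen--Macaulay hypothesis is used only to identify the minimal primes relevant to dimension and to keep the induction inside the Cohen--Macaulay class. In fact, the same argument works for any Noetherian local ring by avoiding the primes $\mathfrak{p}$ with $\dim A/\mathfrak{p}=d$.
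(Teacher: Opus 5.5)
Your proof is correct and follows essentially the same route as the paper: induction on $d$, choosing $x_1 \in \mathfrak{m}\setminus\bigl(\mathfrak{m}^2\cup\bigcup_j\mathfrak{p}_j\bigr)$ by prime avoidance (with $\mathfrak{m}^2$ the only non-prime ideal), passing to the Cohen--Macaulay quotient $A/(x_1)$ of dimension $d-1$, and lifting a linearly independent system of parameters back via $\mathfrak{m}'^2=(\mathfrak{m}^2+(x_1))/(x_1)$. The differences are cosmetic: you avoid the minimal primes with $\dim A/\mathfrak{p}=d$ rather than all associated primes (these coincide for Cohen--Macaulay rings). Your closing remark that the same argument gives the result for any Noetherian local ring is also correct.
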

\begin{proof}
    If $d=0$, then the empty sequence is a linearly independent system of parameters of $A$. Hence, we may assume that $d > 0$. We show this lemma by induction on $d = \dim(A)$. 

    ($d=1$) In general, for any Noetherian ring $R$, the set $D$ of all zero divisors of $R$ is the union of all associated prime ideals of $R$, i.e.,
    \begin{align*}
        D = \bigcup_{\mathfrak{p} \in \Ass(R)} \mathfrak{p}.
    \end{align*}
    If $\mathfrak{m} \in \Ass(A)$, $D = \mathfrak{m}$. Hence, we cannot take a regular element in $\mathfrak{m}$. Therefore, $\depth(A) = 0$. This contradicts the Cohen--Macaulayness of $A$. Hence, $\mathfrak{m} \notin \Ass(A)$. In particular, $\mathfrak{m} \nsubseteq \mathfrak{p}$ for any $\mathfrak{p} \in \Ass(A)$. Moreover, $\mathfrak{m} \nsubseteq \mathfrak{m}^2$ by Nakayama's lemma. Thus, by the prime avoidance lemma, we can take an element $x_1 \in A$ such that:
    \begin{align*}
        x_1 \in \mathfrak{m} \setminus \left( \mathfrak{m}^2 \cup \bigcup_{\mathfrak{p} \in \Ass(A)} \mathfrak{p}\right).
    \end{align*}
    Then, $x_1$ is a regular element in $A$ and $x_1 \notin \mathfrak{m}^2$. Moreover, since $\dim(A) = 1$, $x_1$ is a system of parameters of $A$ by Krull's principal ideal theorem. Hence, $x_1$ is a linearly independent system of parameters of $A$.

    ($d>1$) By the same argument as the case $d=1$, we can take a regular element $x_1 \in \mathfrak{m} \setminus \mathfrak{m}^2$. Then, $\dim(A/(x_1)) = d-1$ and $A/(x_1)$ is Cohen--Macaulay. By the induction hypothesis, there exists a linearly independent system of parameters $\overline{x_2}, \ldots, \overline{x_d} \in \mathfrak{m}/(x_1)$ of $A/(x_1)$, where each $\overline{x_i}$ is the image of $x_i$ in $A/(x_1)$. Then, $x_1, x_2, \ldots, x_d \in \mathfrak{m}$ form a linearly independent system of parameters of $A$. It is clear that these elements form a system of parameters. We have to show that $x_i \notin \mathfrak{m}^2 + (x_1, \ldots, x_{i-1})$ for all $i$. If $i=1$, it is clear by the choice of $x_1$. If $i>1$, we assume that $x_i \in \mathfrak{m}^2 + (x_1, \ldots, x_{i-1})$. Then, taking the image of $x_i$ in $A/(x_1)$, we have:
    \begin{align*}
        \overline{x_i} \in ((\mathfrak{m}^2+(x_1))/(x_1)) + (\overline{x_2}, \ldots, \overline{x_{i-1}}). 
    \end{align*}
    This contradicts the linearly independence of $\overline{x_2}, \ldots, \overline{x_d}$ in $(\mathfrak{m}/(x_1))/((\mathfrak{m}^2+(x_1))/(x_1))$ by the proof of Corollary \ref{cor:hs_regular_deform}. Hence, $x_i \notin \mathfrak{m}^2 + (x_1, \ldots, x_{i-1})$ for all $i$. Therefore, we have shown that there exists a linearly independent system of parameters $x_1, \ldots, x_d \in \mathfrak{m}$ of $A$.
\end{proof}

The following proposition guarantees that $\mathsf{CI}_{\leq c}$ is also a good property in the sense of (A-1) and (A-2) in Definition \ref{def:conditions-for-NC}.

\begin{proposition}\label{prop:nagata_hs}
    Let $c$ be a non-negative integer and $\PP = \mathsf{CI}_{\leq c}$ the property of local rings defined in Definition \ref{def:cicodim}. Then, $\PP$ satisfies \textup{(A-1)} and \textup{(A-2)} in Definition \ref{def:conditions-for-NC}. 
\end{proposition}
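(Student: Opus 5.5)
The plan is to verify (A-1) and (A-2) separately. For (A-2) I lift a linearly independent system of parameters of $A_P$ to $A$ and use Corollary \ref{cor:hs_regular_deform} in both directions.

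\textbf{(A-1).} This is the stability of $\mathsf{CI}_{\leq c}$ under generalization already noted in Section \ref{sec:preliminaries}. Let $A$ be Noetherian and $\mathfrak{q} \subseteq \mathfrak{p}$ primes with $A_{\mathfrak{p}}$ in $\mathsf{CI}_{\leq c}$. Then $A_{\mathfrak{q}} = (A_{\mathfrak{p}})_{\mathfrak{q}A_{\mathfrak{p}}}$ is a complete intersection, since $\mathsf{CI}$ localizes. Moreover $\embcodim(A_{\mathfrak{q}}) \leq \embcodim(A_{\mathfrak{p}}) \leq c$, because the codimension of a complete intersection does not increase under localization (via Andr\'e--Quillen homology, or the deviation $\varepsilon_2$; see \cite{Put21}). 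I will cite this rather than reprove it.

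\textbf{(A-2), choice of elements.} Let $P \in \PP(A)$ and $n = \height(P) = \dim(A_P)$. Since $A_P$ is a complete intersection, it is Cohen--Macaulay. By Lemma \ref{lem:linearly_indep} it has a linearly independent system of parameters $y_1,\ldots,y_n$. Write $y_i = x_i/s_i$ with $x_i \in P$ and $s_i \notin P$. Multiplying by units of $A_P$ changes neither regularity, nor the ideal generated, nor linear independence in $PA_P/P^2A_P$. So $x_1/1,\ldots,x_n/1$ is again a linearly independent system of parameters and an $A_P$-regular sequence. Set $I = (x_1,\ldots,x_n)$.
\begin{itemize}
\item[(A-2-i)] The $x_i/1$ form a system of parameters of $A_P$, so $\sqrt{IA_P} = PA_P$.
\item[(A-2-ii)] Corollary \ref{cor:hs_regular_deform}(2) gives that $A_P/IA_P$ is $\mathsf{CI}_{\leq c}$.
\end{itemize}
If $n=0$, then $I=0$ and everything is trivial, so assume $n \geq 1$.

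\textbf{(A-2-iii), spreading out regularity.} This is the only real step: the regularity of $x_1,\ldots,x_n$ must hold at all nearby primes $Q \supseteq P$, not only at $P$. I would use Koszul homology. The modules $H_i(x_1,\ldots,x_n;A)$ are finitely generated and $H_1(x;A)_P = H_1(x;A_P) = 0$. Hence there is $a \in A \setminus P$ with $H_1(x;A)_a = 0$. For $Q \in W := D(a)$ with $Q \supseteq P$, we have $x_i \in P \subseteq Q$ and $H_1(x;A_Q) = 0$. Since $A_Q$ is a Noetherian local ring and the $x_i$ lie in its maximal ideal, vanishing of $H_1$ implies that $x_1/1,\ldots,x_n/1$ is $A_Q$-regular. (Alternatively, one can shrink inductively so that each $x_i$ is a nonzerodivisor on $A/(x_1,\ldots,x_{i-1})$ after inverting $a$.) Now if $A_Q/IA_Q$ is $\mathsf{CI}_{\leq c}$, Corollary \ref{cor:hs_regular_deform}(1) gives that $A_Q$ is $\mathsf{CI}_{\leq c}$. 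This is (A-2-iii) with $W = D(a)$.

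No linear independence at $Q$ is needed here, because part (1) of the corollary works for an arbitrary regular sequence. The only potential subtlety is the Koszul criterion for regular sequences in a local ring, which is standard.
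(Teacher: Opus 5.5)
Your proposal is correct and follows the paper's proof essentially step for step: (A-1) by citing \cite{Put21}, and (A-2) via a linearly independent system of parameters from Lemma \ref{lem:linearly_indep}, with Corollary \ref{cor:hs_regular_deform}(2) giving (A-2-ii) and spreading regularity to a neighbourhood plus Corollary \ref{cor:hs_regular_deform}(1) giving (A-2-iii). Your Koszul-homology argument, the clearing of denominators, the restriction to $Q \supseteq P$, and the separate treatment of $n=0$ spell out details that the paper leaves implicit.
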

\begin{proof}
    (A-1) Let $(A, \mathfrak{m})$ be a local complete intersection ring of codimension at most $c$ and $\mathfrak{p}$ be a prime ideal of $A$. Then, $A_\mathfrak{p}$ is also a local complete intersection ring of codimension at most $c$; see \cite{Put21}. 

    (A-2) Let $A$ be a Noetherian ring and $P \in \mathsf{CI}_{\leq c}(A)$. Then, $A_P$ is a complete intersection ring of codimension at most $c$. In particular, $A_P$ is Cohen--Macaulay. We can take $x_1, \ldots, x_n \in P$ such that $x_1, \ldots, x_n$ form a linearly independent system of parameters of $A_P$ by Lemma \ref{lem:linearly_indep}. Hence, (A-2-i) holds.  Moreover, (A-2-ii) is also satisfied because $A_P/(x_1, \ldots, x_n)A_P$ is a complete intersection of codimension at most $c$ by Corollary \ref{cor:hs_regular_deform} (2). We check (A-2-iii). We can take an open neighbourhood $U$ of $P$ in $\Spec(A)$ such that, for every $Q \in U$, $x_1, \ldots, x_n$ form a regular sequence in $A_Q$. Then, by Corollary \ref{cor:hs_regular_deform} (1), we have that $A_Q/(x_1, \ldots, x_n)A_Q$ is $\mathsf{CI}_{\leq c} \Longrightarrow$ $A_Q$ is $\mathsf{CI}_{\leq c}$ for every $Q \in U$. Hence, (A-2-iii) holds. 
\end{proof}

Moreover, $\mathsf{CI}_{\leq c}$ is stable under polynomial extensions. 

\begin{proposition}\label{prop:polyext}
    Let $A$ be a Noetherian ring. Then, the following statements hold for any integer $c \geq 0$:
    \begin{enumerate}
        \item $A$ is $\mathsf{CI}_{\leq c}$ $\Longleftrightarrow$ $A[x]$ is $\mathsf{CI}_{\leq c}$.
        \item $A$ is $\mathsf{CI}_{\leq c}$ $\Longleftrightarrow$ $A[[x]]$ is $\mathsf{CI}_{\leq c}$.
    \end{enumerate}
\end{proposition}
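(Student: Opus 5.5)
The plan is to reduce both statements to local computations at primes, using the known fact that $\mathsf{CI}$ is stable under polynomial and power series extensions together with a comparison of embedding dimensions and dimensions. Here ``$A$ is $\mathsf{CI}_{\leq c}$'' means that $A_{\mathfrak{p}}$ is $\mathsf{CI}_{\leq c}$ for all $\mathfrak{p}\in\Spec(A)$, as in Definition \ref{def:openness}.

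For (1), the implication ($\Leftarrow$) comes from faithful flatness. Given $\mathfrak{p}\in\Spec(A)$, put $\mathfrak{P}=\mathfrak{p}A[x]$. Then $A_{\mathfrak{p}}\to A[x]_{\mathfrak{P}}$ is a flat local map whose closed fibre is the field $\kappa(\mathfrak{p})(x)$. Hence $A[x]_{\mathfrak{P}}$ is obtained from $A_{\mathfrak{p}}$ by a flat local extension with regular fibre, and
\begin{align*}
\dim A[x]_{\mathfrak{P}}=\dim A_{\mathfrak{p}},\qquad \embdim A[x]_{\mathfrak{P}}=\embdim A_{\mathfrak{p}},
\end{align*}
because the maximal ideal is $\mathfrak{p}A[x]_{\mathfrak{P}}$ and $\mathfrak{m}/\mathfrak{m}^2$ base changes. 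Avramov's theorem (complete intersection descends along flat local maps) then shows that $A_{\mathfrak{p}}$ is $\mathsf{CI}$, and the codimensions agree.

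For ($\Rightarrow$), let $\mathfrak{P}\in\Spec(A[x])$ and $\mathfrak{p}=\mathfrak{P}\cap A$. After localizing, we may assume $A=(A,\mathfrak{m})$ is local $\mathsf{CI}_{\leq c}$ with $\mathfrak{P}\cap A=\mathfrak{m}$. There are two cases. If $\mathfrak{P}=\mathfrak{m}A[x]$, the computation above applies. Otherwise $\mathfrak{P}=(\mathfrak{m},f)$ with $f$ monic, irreducible modulo $\mathfrak{m}$. In this case $\dim A[x]_{\mathfrak{P}}=\dim A+1$, and $f$ is part of a minimal generating set, so $\embdim$ rises by exactly one and the codimension is unchanged. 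Concretely, $f$ is a nonzerodivisor in $A[x]_{\mathfrak{P}}$ with $f\notin\mathfrak{P}^2$, and $A[x]_{\mathfrak{P}}/(f)$ is a localization of $A[x]/(f)$, which is finite free over $A$ with field extension residually. Corollary \ref{cor:hs_regular_deform} then reduces the problem to showing that $A[x]_{\mathfrak{P}}/(f)$ is $\mathsf{CI}_{\leq c}$. This ring is flat local over $A$ with closed fibre the field $k[x]/(\bar f)$, which is regular. Its codimension therefore equals that of $A$, and it is $\mathsf{CI}$ by the standard ascent of complete intersection along flat maps with regular (complete intersection) closed fibre.

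The key general lemma to isolate is this: for a flat local map $(A,\mathfrak{m})\to(B,\mathfrak{n})$ with closed fibre regular, $B$ is $\mathsf{CI}$ iff $A$ is, and $\embcodim B=\embcodim A$. The $\mathsf{CI}$ part is Avramov's result. For the codimension, $\dim B=\dim A+\dim B/\mathfrak{m}B$, while lifting a regular system of parameters of the fibre gives $\embdim B=\embdim A+\embdim(B/\mathfrak{m}B)$. The latter holds because $\mathfrak{m}B\cap\mathfrak{n}^2=\mathfrak{m}\mathfrak{n}$ up to what is needed, which follows from flatness via $\Tor$ or from Matsumura's treatment of regular fibres. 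I expect this equality of embedding dimensions to be the main obstacle, and I would try to prove it by comparing the first Andr\'e--Quillen / cotangent modules. It also handles both cases above uniformly, without needing Corollary \ref{cor:hs_regular_deform}.

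For (2), $A[[x]]$ is faithfully flat over $A$. Every maximal ideal of $A[[x]]$ has the form $(\mathfrak{m},x)$, and $x$ lies in the Jacobson radical. For ($\Rightarrow$), since the $\mathsf{CI}_{\leq c}$ property is stable under generalization by (A-1) in Proposition \ref{prop:nagata_hs}, it suffices to check maximal ideals $\mathfrak{M}=(\mathfrak{m},x)$. There $x$ is a nonzerodivisor with $x\notin\mathfrak{M}^2$ and $A[[x]]_{\mathfrak{M}}/(x)=A_{\mathfrak{m}}$, so Corollary \ref{cor:hs_regular_deform}(1) applies. For ($\Leftarrow$), given $\mathfrak{p}$, choose $\mathfrak{P}=(\mathfrak{p},x)$, a prime of $A[[x]]$ lying over $\mathfrak{p}$. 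Since $x$ is regular in $A[[x]]_{\mathfrak{P}}$ and not in $\mathfrak{P}^2$, Corollary \ref{cor:hs_regular_deform}(2) gives that $A[[x]]_{\mathfrak{P}}/(x)$ is $\mathsf{CI}_{\leq c}$. That quotient is a localization of $A$ at $\mathfrak{p}$, namely $A_{\mathfrak{p}}$, so $A_{\mathfrak{p}}$ is $\mathsf{CI}_{\leq c}$.
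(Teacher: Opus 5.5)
Your proof is correct, but it differs from the paper's in three of the four implications.

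For (1) $\Leftarrow$, the paper localizes at $\mathfrak{P}=\mathfrak{p}A[x]+(x)$. It then applies Corollary~\ref{cor:hs_regular_deform}~(2) to the regular element $x\notin\mathfrak{P}^2A[x]_{\mathfrak{P}}$, which gives $A_{\mathfrak{p}}\cong A[x]_{\mathfrak{P}}/(x)$ directly. This is exactly your argument for (2) $\Leftarrow$, and it would have worked verbatim here. Choosing the generic point $\mathfrak{p}A[x]$ of the fibre instead forces you to invoke Avramov's descent of complete intersections along flat local maps. That is correct, but much heavier than needed.

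For (1) $\Rightarrow$, the paper takes $\mathsf{CI}$ for $A[x]$ as known. It gets the codimension from additivity of embedding dimension along flat local maps with regular closed fibre \cite[Lemma 3.1]{NSW15}, plus the dimension formula. That additivity is precisely the ``key general lemma'' you flagged as the main obstacle. Your case split avoids it. Cutting down by the monic $f$ via Corollary~\ref{cor:hs_regular_deform}~(1) reduces to the finite free local ring $A[x]/(f)$, whose maximal ideal is $\mathfrak{m}A[x]/(f)$. There $\embdim$ is preserved simply by base change of $\mathfrak{m}/\mathfrak{m}^2$. The only remaining external input is $\mathsf{CI}$ ascent along flat local maps whose closed fibre is a field, which you also use in the case $\mathfrak{P}=\mathfrak{m}A[x]$. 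So your sketch of the general lemma can be dropped; its justification ``up to what is needed'' is too vague as written anyway.

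For (2) $\Rightarrow$, the paper computes via $\widehat{A[[x]]_{\mathfrak{M}}}\cong\widehat{A_{\mathfrak{m}}}[[x]]$ and leaves the $\mathsf{CI}$ property implicit. Your use of Corollary~\ref{cor:hs_regular_deform}~(1) with $A[[x]]_{\mathfrak{M}}/(x)\cong A_{\mathfrak{m}}$ is shorter and gives the $\mathsf{CI}$ property and the codimension bound at once.

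Your (2) $\Leftarrow$ agrees with the paper. The one step worth writing out is $x\notin\mathfrak{P}^2A[[x]]_{\mathfrak{P}}$ for $\mathfrak{P}=(\mathfrak{p},x)$. Since $\mathfrak{p}A[[x]]=\mathfrak{p}[[x]]$, you can pass to the domain $(A/\mathfrak{p})[[x]]_{(x)}$, where $x\notin(x)^2$.
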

\begin{proof}
    (1) First, we see $\Longleftarrow$. Let $\mathfrak{p}$ be any prime ideal of $A$. We have to show that $A_\mathfrak{p}$ is $\mathsf{CI}_{\leq c}$. We put $\mathfrak{P} = \mathfrak{p}A[x] + (x)$. Indeed, $\mathfrak{P}$ is a prime ideal of $A[x]$ because $A[x]/\mathfrak{P} \cong A/\mathfrak{p}$. We have
    \begin{align*}
        A[x]_{\mathfrak{P}}/xA[x]_{\mathfrak{P}} \cong (A[x]/(x))_{\mathfrak{P}/(x)} \cong A_\mathfrak{p}.
    \end{align*}
    Hence, it is enough to show that $x$ is a regular element with $x \notin \mathfrak{P}^2 A[x]_{\mathfrak{P}}$ by Corollary \ref{cor:hs_regular_deform} (2). It is a basic fact that $x$ is a regular element in $A[x]_{\mathfrak{P}}$. Moreover, we have
    \begin{align*}
        A[x]_{\mathfrak{P}}/\mathfrak{p}A[x]_{\mathfrak{P}} \cong \kappa(\mathfrak{p})[x]_{(x)}. 
    \end{align*}
    Since $x \notin (x)^2 \kappa(\mathfrak{p})[x]_{(x)}$, we have $x \notin \mathfrak{P}^2 A[x]_{\mathfrak{P}}$. Second, we see $\Longrightarrow$. Let $\mathfrak{P}$ be any prime ideal of $A[x]$. It is well known that $A[x]$ is $\mathsf{CI}$. Hence, it is enough to show that $\embcodim(A[x]_{\mathfrak{P}}) \leq c$. We put $\mathfrak{p} = \mathfrak{P} \cap A$. Then, 
    \begin{align*}
        A_\mathfrak{p} \rightarrow A[x]_{\mathfrak{P}}
    \end{align*}
    is a flat local homomorphism. The closed fiber is given by
    \begin{align*}
        A[x]_{\mathfrak{P}}/\mathfrak{p}A[x]_{\mathfrak{P}} &\cong A[x]/\mathfrak{p}A[x] \otimes_{A[x]} A[x]_{\mathfrak{P}} \\ 
        &\cong A/\mathfrak{p} \otimes_{A} A[x] \otimes_{A[x]} A[x]_{\mathfrak{P}} \\
        &\cong A/\mathfrak{p} \otimes_{A} A_{\mathfrak{p}}[x]_{\mathfrak{P}A_{\mathfrak{p}}[x]} \\
        &\cong \kappa(\mathfrak{p}) \otimes_{A_{\mathfrak{p}}} A_{\mathfrak{p}}[x]_{\mathfrak{P}A_{\mathfrak{p}}[x]} \\ 
        &\cong \kappa(\mathfrak{p})[x]_{\mathfrak{P}\kappa(\mathfrak{p})[x]}.
    \end{align*}
    Hence, by \cite[Lemma 3.1]{NSW15}, we have 
    \begin{align*}
        \embdim(A_\mathfrak{p}) + \embdim(\kappa(\mathfrak{p})[x]_{\mathfrak{P}\kappa(\mathfrak{p})[x]}) = \embdim(A[x]_{\mathfrak{P}})
    \end{align*}
    Therefore, we have
    \begin{align*}
        \embcodim(A[x]_{\mathfrak{P}}) &= \embdim(A[x]_{\mathfrak{P}}) - \dim(A[x]_{\mathfrak{P}}) \\
        &= \embdim(A_\mathfrak{p}) + \embdim(\kappa(\mathfrak{p})[x]_{\mathfrak{P}\kappa(\mathfrak{p})[x]}) - (\dim(A_\mathfrak{p}) + \dim(\kappa(\mathfrak{p})[x]_{\mathfrak{P}\kappa(\mathfrak{p})[x]})) \\
        &= \embcodim(A_\mathfrak{p}) \\ 
        &\leq c.
    \end{align*}
    Here, we used the fact that $\embdim(\kappa(\mathfrak{p})[x]_{\mathfrak{P}\kappa(\mathfrak{p})[x]}) = \dim(\kappa(\mathfrak{p})[x]_{\mathfrak{P}\kappa(\mathfrak{p})[x]})$ and the dimension equality for flatness; see \cite[Theorem 15.1]{Mat86}. 

    (2) $\Longleftarrow$ is similar to $\Longleftarrow$ in (1). We show $\Longrightarrow$. Let $\mathfrak{M}$ be any maximal ideal of $A[[x]]$. We put $\mathfrak{m} = \mathfrak{M} \cap A$. It is enough to show that $A[[x]]_{\mathfrak{M}}$ is $\mathsf{CI}_{\leq c}$ because $\mathsf{CI}_{\leq c}$ is stable under generalization. We have $\mathfrak{M} = \mathfrak{m}A[[x]] + (x)$; see \cite[Exercise 1.5 iv)]{AM69}. We have
    \begin{align*}
        \hat{A[[x]]_{\mathfrak{M}}} \cong \hat{A_\mathfrak{m}}[[x]];
    \end{align*}
    see the proof of \cite[Theorem 19.5]{Mat86}. Hence, we have
    \begin{align*}
        \embcodim(A[[x]]_{\mathfrak{M}}) &= \embcodim(\hat{A[[x]]_{\mathfrak{M}}}) \\
        &= \embcodim(\hat{A_\mathfrak{m}}[[x]]) \\
        &= \embdim(\hat{A_\mathfrak{m}}[[x]]) - \dim(\hat{A_\mathfrak{m}}[[x]]) \\
        &= \embdim(\hat{A_\mathfrak{m}}) + 1 - (\dim(\hat{A_\mathfrak{m}}) + 1) \\
        &= \embcodim(A_\mathfrak{m}) \\ 
        &\leq c.
    \end{align*}
    This is what we want to show.
\end{proof}

In particular, hypersurfaces can be characterized by the following numerical characterization. A similar statement can be found in \cite[5.1]{Avr98}. This seems to be a well-known fact, but the author could not find an explicit reference. Hence, we give a proof for the sake of completeness.

\begin{proposition}\label{prop:indep_hs}
    Let $(A, \mathfrak{m}, k)$ be a Noetherian local ring. Then, the following conditions are equivalent:
    \begin{enumerate}
        \item $\hat{A} = R/(f)$ for some regular local ring $(R, \mathfrak{n})$ and a regular element $f$ in $R$.
        \item $A$ is a hypersurface ring. 
        \item $A$ is a Cohen--Macaulay ring and $\embcodim(A) \leq 1$. 
        \item $\embdim(A) - \depth(A) \leq 1$. 
    \end{enumerate}
\end{proposition}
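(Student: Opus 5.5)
I will prove the cycle $(1)\Rightarrow(2)\Rightarrow(3)\Rightarrow(4)\Rightarrow(1)$. All four conditions can be checked after passing to the completion $\hat{A}$. Indeed, $\embdim$, $\dim$ and $\depth$ are unchanged by completion, and being Cohen--Macaulay, being a complete intersection, and $\embcodim$ are all detected on $\hat{A}$. So throughout the plan I may assume $A$ is complete whenever convenient.

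\textbf{The easy implications.} For $(1)\Rightarrow(2)$: $A$ is a complete intersection by Definition \ref{def:main_sing}(2). I choose the Cohen presentation $\hat{A}=R/(f)$ to be minimal, i.e.\ with $f\in\mathfrak{n}^2$. If instead $f\notin\mathfrak{n}^2$, then $R/(f)$ is already regular, and this gives a presentation with zero relations. The remark after Definition \ref{def:cicodim} then gives $\embcodim(A)\le 1$, so $A$ is a hypersurface. For $(2)\Rightarrow(3)$: complete intersections are Cohen--Macaulay by the hierarchy $\mathsf{CI}\Rightarrow\mathsf{CM}$, and the codimension bound is part of Definition \ref{def:cicodim}. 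For $(3)\Rightarrow(4)$: Cohen--Macaulayness gives $\depth(A)=\dim(A)$, so $\embdim(A)-\depth(A)=\embcodim(A)\le 1$.

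\textbf{The implication $(4)\Rightarrow(1)$.} Assume $A$ is complete and write $A=R/J$ with $(R,\mathfrak{n})$ regular and $J\subseteq\mathfrak{n}^2$. This is possible by Cohen's structure theorem, choosing $R$ with $\dim R=\embdim(A)=e$. Since $\depth(A)\le\dim(A)\le e$, hypothesis (4) leaves two cases.

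In the first case $\depth(A)=e$. Then $\dim(A)=e=\dim R$, and since $R$ is a domain this forces $J=0$. Hence $A=R$ is regular, and $A=R/(1\cdot 0)$ does not fit (1) literally. Instead I take $R'=R[[t]]$ and $f=t$, so that $A\cong R'/(t)$ with $t$ a regular element.

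In the second case $\depth(A)=e-1$. By Auslander--Buchsbaum over the regular ring $R$,
\[
\projdim_R(A)=\depth R-\depth A=1 .
\]
So there is a minimal free resolution $0\to R^m\to R\to A\to 0$, which means $J\cong R^m$ is a free ideal. A nonzero ideal of a domain that is free must have rank $1$, since any two elements $a,b$ satisfy the relation $b\cdot a-a\cdot b=0$. Hence $J=(f)$ with $f\neq 0$, and $f$ is regular because $R$ is a domain. This gives (1).

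The main obstacle is making sure the completion reductions in the first paragraph are legitimate, in particular that $\embdim$ and $\depth$ are preserved under completion. I will handle this by citing the standard facts from \cite{Mat86}. The rank-one argument for $J$ is the key step, but it is elementary.
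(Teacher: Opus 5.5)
Your proof is correct and follows the paper's route: the same cycle of implications, with $(4)\Rightarrow(1)$ done via a minimal Cohen presentation, Auslander--Buchsbaum giving $\projdim_R(\hat{A})=1$, and the conclusion that the defining ideal is free of rank one (you read freeness off the minimal resolution, while the paper uses an $\Ext$ computation; the content is the same). The deviations are cosmetic. In $(1)\Rightarrow(2)$ you split on whether $f\in\mathfrak{n}^2$ rather than using the paper's direct bound $\embdim(R/(f))-\dim(R/(f))\le \embdim(R)-\dim(R)+1$, and you make the regular case explicit via $A\cong R[[t]]/(t)$, which the paper leaves as ``we can check (1)''.
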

\begin{proof}
    (1) $\Longrightarrow$ (2): Since $A$ is $\mathsf{CI}$, we have to show that $\embcodim(A) \leq 1$. We have
    \begin{align*}
        \embdim(A) - \dim(A) &= \embdim(\hat{A}) - \dim(\hat{A}) \\ 
        &= \embdim(R/(f)) - \dim(R/(f)) \\ 
        &\leq \embdim(R) - (\dim(R) - 1) \\ 
        &= 1.
    \end{align*}

    (2) $\Longrightarrow$ (3): Clear. 

    (3) $\Longrightarrow$ (4): Since $A$ is Cohen--Macaulay, we have $\depth(A) = \dim(A)$. Hence, we have $\embdim(A) - \depth(A) = \embdim(A) - \dim(A) \leq 1$.

    (4) $\Longrightarrow$ (1): If $\embdim(A) - \depth(A) = 0$, $A$ is a regular local ring. Hence, we can check (1). We assume $\embdim(A) - \depth(A) = 1$. We take a minimal Cohen presentation $\hat{A} = R/I$. Then, we have:
    \begin{align*}
        \embdim(A) &= \dim_k(\mathfrak{m}/\mathfrak{m}^2) \\ 
        &= \dim_k((\mathfrak{n}/I)/(\mathfrak{n}^2/I)) \\ 
        &= \dim_k(\mathfrak{n}/(\mathfrak{n}^2 + I)) \\ 
        &= \dim_k(\mathfrak{n}/\mathfrak{n}^2) \\ 
        &= \embdim(R) \\ 
        &= \dim(R).
    \end{align*}
    On the other hand, $R/I$ has a finite projective dimension as an $R$-module because $R$ is a regular local ring. Hence, by the Auslander--Buchsbaum formula, we have:
    \begin{align*}
        \projdim_R(R/I) &= \depth_R(R) - \depth_R(R/I) \\ 
        &= \dim(R) - \depth_{R/I}(R/I) \\ 
        &= \embdim(A) - \depth(\hat{A}) \\
        &= \embdim(A) - \depth(A) \\
        &= 1.
    \end{align*}
    Consider the natural exact sequence
    \begin{align*}
        0 \longrightarrow I \longrightarrow R \longrightarrow R/I \longrightarrow 0. 
    \end{align*}
    For any $R$-module $M$, applying the functor $\Hom_R(-, M)$ to the above exact sequence yields the following long exact sequence:
    \[\begin{tikzcd}[sep=tiny]
        0 & {\Hom_R(R/I, M)} & {\Hom_R(R, M)} & {\Hom_R(I, M)} & \hspace{0pt} \\
        \hspace{0pt} & {\Ext_R^1(R/I, M)} & {\Ext_R^1(R, M)} & {\Ext_R^1(I, M)} & \hspace{0pt}  \\
        \hspace{0pt} & {\Ext_R^2(R/I, M)} & {\Ext_R^2(R, M)} & {\Ext_R^2(I, M)} & \cdots. 
        \arrow[from=1-1, to=1-2]
        \arrow[from=1-2, to=1-3]
        \arrow[from=1-3, to=1-4]
        \arrow[from=1-4, to=1-5]
        \arrow[from=2-1, to=2-2]
        \arrow[from=2-2, to=2-3]
        \arrow[from=2-3, to=2-4]
        \arrow[from=2-4, to=2-5]
        \arrow[from=3-1, to=3-2]
        \arrow[from=3-2, to=3-3]
        \arrow[from=3-3, to=3-4]
        \arrow[from=3-4, to=3-5]
    \end{tikzcd}\]
    Then, we have $\Ext_R^1(I, M) = \Ext_R^2(R/I, M) = 0$ because $R$ is projective as an $R$-module and $\projdim_R(R/I) = 1$. Hence, $I$ is projective as an $R$-module. Since $R$ is a local ring, $I$ is free as an $R$-module. Because of the natural injection $I \hookrightarrow R$, $\rank(I) = 1$ as an $R$-module. Hence, $I = (f)$ for some regular element $f \in R$. Therefore, we have $\hat{A} = R/(f)$. So, we have shown that (4) $\Longrightarrow$ (1).
\end{proof}

(3) $\Longrightarrow$ (1) cannot be generalized to complete intersection of codimension at most $c$ for $c \geq 2$. This is because there exists a Noetherian local ring $(A, \mathfrak{m}, k)$ such that $A$ is Cohen--Macaulay but is not a complete intersection. 

At the end of this subsection, we give the following lemma which plays an important role when we give a counterexample to (A-3) in Definition \ref{def:conditions-for-NC} for $\mathsf{CI}_{\leq c}$ in Theorem \ref{th:counterexampleNC}.

\begin{lemma}\label{lem:squarezero}
    Let $(A, \mathfrak{m}, k)$ be a Noetherian local ring. We put $B = A[x]/(x^2)$. Then, the following statements hold:
    \begin{enumerate}
        \item $(A, \mathfrak{m}, k)$ is a complete intersection local ring of codimension $c$ if and only if $A[x]/(x^2)$ is a complete intersection local ring of codimension $c+1$.
        \item $\mathsf{CI}_{\leq c-1}(A) = \mathsf{CI}_{\leq c}(B)$ for any integer $c \geq 1$.
    \end{enumerate}
\end{lemma}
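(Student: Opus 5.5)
The plan is to read $B = A[x]/(x^2)$ as a local ring whose completion is explicit, then compare minimal Cohen presentations. First, $B$ is local with maximal ideal $\mathfrak{n} = \mathfrak{m}B + xB$, since $x$ is nilpotent and $B/xB \cong A$. Its dimension equals $\dim(A)$, as $\Spec(B) \to \Spec(A)$ is a homeomorphism. For the embedding dimension, $\mathfrak{n}^2 = \mathfrak{m}^2 B + x\mathfrak{m}B$ because $x^2 = 0$. Writing $B = A \oplus Ax$, this gives $\mathfrak{n} = \mathfrak{m} \oplus Ax$ and $\mathfrak{n}^2 = \mathfrak{m}^2 \oplus \mathfrak{m}x$. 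Hence $\mathfrak{n}/\mathfrak{n}^2 \cong \mathfrak{m}/\mathfrak{m}^2 \oplus k$, so $\embdim(B) = \embdim(A)+1$ and $\embcodim(B) = \embcodim(A)+1$. This settles the codimension part of (1), and it also shows that $B$ is never regular.

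For the complete intersection part of (1), note that $\hat{B} \cong \hat{A}[x]/(x^2)$, because $B$ is finite free over $A$. Suppose first that $\hat{A} = R/(f_1,\ldots,f_c)$ with $R$ regular and $f_1,\ldots,f_c$ a regular sequence. Then $\hat{B} \cong R[[x]]/(f_1,\ldots,f_c,x^2)$, where $R[[x]]$ is regular and $f_1,\ldots,f_c,x^2$ is a regular sequence ($R[[x]]$ is flat over $R$, and $x^2$ is regular modulo the $f_i$ since $\hat{A}[[x]]$ is $x$-torsion free). So $B$ is $\mathsf{CI}$, with codimension $c+1$ by the computation above.

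For the converse, I would use the fact that $x$ is not a nonzerodivisor on $B$, so the earlier regular-sequence results do not apply directly. Two routes are available. The first uses Andr\'e--Quillen or Koszul homology, via the characterization that $A$ is $\mathsf{CI}$ iff $H_3(A,k,k)=0$, or equivalently that the deviations $\varepsilon_i$ vanish for $i \ge 3$. Since $B = A \otimes_{\mathbb{Z}} \mathbb{Z}[x]/(x^2)$, or better since $B = A \otimes_{A[x]} A[x]/(x^2)$ is flat over $A$ with closed fibre $k[x]/(x^2)$, the flat base change (Avramov) gives $\varepsilon_i(B) = \varepsilon_i(A) + \varepsilon_i(k[x]/(x^2))$. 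The fibre $k[x]/(x^2)$ is a hypersurface, so its deviations vanish for $i \ge 3$. Therefore $B$ is $\mathsf{CI}$ iff $A$ is $\mathsf{CI}$. This step, the "only if'' direction of $\mathsf{CI}$, is the main obstacle. The more elementary fallback is the second route: use the classical result that for a flat local homomorphism, the target is $\mathsf{CI}$ iff the source and the closed fibre are (Avramov's theorem, also in \cite[\S 21]{Mat86}-style references). I would cite this result.

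Part (2) then follows by localization. Every prime of $B$ has the form $\mathfrak{P} = \mathfrak{p}B + xB$ with $\mathfrak{p}\in\Spec(A)$, and $B_{\mathfrak{P}} \cong A_{\mathfrak{p}}[x]/(x^2)$. Applying (1) to $A_{\mathfrak{p}}$ gives: $B_{\mathfrak{P}}$ is $\mathsf{CI}_{\le c}$ iff $B_{\mathfrak{P}}$ is $\mathsf{CI}$ with codimension $\le c$, iff $A_{\mathfrak{p}}$ is $\mathsf{CI}$ with codimension $\le c-1$. Identifying $\Spec(B)$ with $\Spec(A)$ then yields $\mathsf{CI}_{\le c-1}(A) = \mathsf{CI}_{\le c}(B)$.
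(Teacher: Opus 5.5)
Your argument is correct. Your computation $\embdim(B)=\embdim(A)+1$ and your treatment of (2) match the paper's. The difference is in the descent step, that $B$ being $\mathsf{CI}$ forces $A$ to be $\mathsf{CI}$.

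The paper reads the whole $\mathsf{CI}$ equivalence off the isomorphism $\hat{B}\cong\hat{A}[[x]]/(x^2)$. Implicitly, although $x$ is a zero-divisor on $B$, both $x^2$ and $x$ are nonzerodivisors on $\hat{A}[[x]]$. So the standard fact recalled at the start of \S\ref{subsec:ci-bounded} applies: a local ring $S$ is $\mathsf{CI}$ if and only if $S/yS$ is, for any $S$-regular $y$. This gives that $\hat{B}$ is $\mathsf{CI}$ $\Longleftrightarrow$ $\hat{A}[[x]]$ is $\mathsf{CI}$ $\Longleftrightarrow$ $\hat{A}=\hat{A}[[x]]/(x)$ is $\mathsf{CI}$. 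You already had this presentation in hand, namely $\hat{B}\cong R[[x]]/(f_1,\ldots,f_c,x^2)=\hat{A}[[x]]/(x^2)$. Your remark that the regular-sequence results ``do not apply directly'' overlooks that they apply in $\hat{A}[[x]]$.

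Instead you invoke Avramov's theorem: for a flat local homomorphism, the target is $\mathsf{CI}$ if and only if the source and the closed fibre are. This is valid and more conceptual, since it would treat any flat local extension with complete intersection closed fibre at once. But it imports a deep theorem where an elementary regular-element argument suffices.

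Two small corrections to your first route.
\begin{itemize}
\item Additivity of deviations along a flat local map fails in low degrees in general. For instance, $k[[s^2]]\to k[[s]]$ is flat with closed fibre $k[s]/(s^2)$, and neither the embedding dimension nor the number of minimal defining relations is additive. So you should only quote additivity in the degrees you actually use. In your particular case it does hold in all degrees, since the Poincar\'e series satisfy $P^B_k(t)=P^A_k(t)/(1-t)$.
\item $A\otimes_{A[x]}A[x]/(x^2)$ is $A$, not $B$. What you need is simply that $B$ is free of rank $2$ over $A$ with closed fibre $k[x]/(x^2)$.
\end{itemize}
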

\begin{proof}
    (1) Since $\hat{B} = \hat{A}[[x]]/(x^2)$, $A$ is a complete intersection if and only if $B$ is a complete intersection. Moreover, we have
    \begin{align*}
        \embdim(B) = \dim_k((\mathfrak{m}/\mathfrak{m}^2) \oplus kx) = \embdim(A) + 1
    \end{align*}
    and $\dim(B) = \dim(A)$. Hence, we have $\embcodim(B) = \embcodim(A) + 1$. Therefore, $A$ is a complete intersection of codimension $c$ if and only if $B$ is a complete intersection of codimension $c+1$.

    (2) The following natural map is a homeomorphism:
    \begin{align*}
        \Spec(B) \longrightarrow \Spec(A), \quad \mathfrak{P} \longmapsto \mathfrak{P} \cap A.
    \end{align*}
    The inverse map is given by $\mathfrak{p} \longmapsto \mathfrak{p}B + (x)$. Moreover, we have $B_{\mathfrak{P}} = A_{\mathfrak{p}}[t]/(t^2)$. Hence, by (1), we have
    \begin{align*}
        \mathfrak{p} \in \mathsf{CI}_{\leq c-1}(A) &\Longleftrightarrow A_\mathfrak{p} \text{ is } \mathsf{CI}_{\leq c-1} \\
        &\Longleftrightarrow A_\mathfrak{p}[t]/(t^2) \text{ is } \mathsf{CI}_{\leq c} \\
        &\Longleftrightarrow B_{\mathfrak{P}} \text{ is } \mathsf{CI}_{\leq c} \\
        &\Longleftrightarrow \mathfrak{P} \in \mathsf{CI}_{\leq c}(B).
    \end{align*}
\end{proof}

\subsection{Failure of the Nagata criterion}\label{subsec:failure-nagata}

We have seen that $\mathsf{CI}_{\leq c}$ shares many properties with the other classes of singularities. In particular, (A-1), (A-2) and (A-3) imply \NC{} by Proposition \ref{prop:conditions-imply-NC}, and $\mathsf{CI}_{\leq c}$ satisfies (A-1) and (A-2) by Proposition \ref{prop:nagata_hs}. However, in this subsection, we see that $\mathsf{CI}_{\leq c}$ does not satisfy (A-3) for $c>0$ in general. Therefore, $\mathsf{CI}_{\leq c}$ does not satisfy \NC{} for $c>0$ in general. First, we show that $\mathsf{HS} = \mathsf{CI}_{\leq 1}$ does not satisfy (A-3) in Definition \ref{def:conditions-for-NC} in general.

\begin{theorem}\label{th:counterexampleNC}
    For every $c \geq 1$, the property $\mathsf{CI}_{\leq c}$ does not satisfy \NC{}.   
\end{theorem}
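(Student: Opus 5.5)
The plan has two stages: first build a counterexample for $c=1$, then push it up to every $c\geq 1$ with Lemma \ref{lem:squarezero}.

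\textbf{Reduction to $c=1$.} Suppose $A$ is a Noetherian ring satisfying $\mathsf{HS}$-Q0 whose locus $\mathsf{HS}(A)=\mathsf{CI}_{\leq 1}(A)$ is not open. Put $B_1=A$ and $B_{k+1}=B_k[t]/(t^2)$. By Proposition \ref{prop:Q0preserved} (4), every $B_k$ satisfies $\mathsf{HS}$-Q0. Moreover $B_k/\mathfrak{P}\cong A/\mathfrak{p}$, so $\mathsf{CI}_{\leq c}$-Q0 holds for every $c\geq 1$, by Remark \ref{rem:openness} (2). Under the homeomorphism $\Spec(B_{c})\cong\Spec(A)$, Lemma \ref{lem:squarezero} (2), applied repeatedly, identifies $\mathsf{CI}_{\leq c}(B_{c})$ with $\mathsf{CI}_{\leq 1}(A)$. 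This set is not open. So $B_c$ satisfies $\mathsf{CI}_{\leq c}$-Q0 while $\mathsf{CI}_{\leq c}$-1 fails, and \NC{} fails for $\mathsf{CI}_{\leq c}$.

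\textbf{Construction for $c=1$.} Proposition \ref{prop:conditions-imply-NC} together with Proposition \ref{prop:nagata_hs} suggests where to look: the counterexample must violate (A-3). So I want a ring $A$ with a nilpotent prime $\mathfrak{p}$ such that $A/\mathfrak{p}$ is regular, or at least $\mathsf{HS}$-Q0 (a domain), and such that $A_{\mathfrak{q}}$ is a hypersurface for generic $\mathfrak{q}$ in a locally closed set that is not open. The natural shape is a thickening $A=D[t]/(t^2)$, or more generally $A=D\oplus M$, where:
\begin{itemize}
\item $D$ is a Noetherian domain whose regular locus $\mathsf{Reg}(D)$ is not open, yet every quotient $D/\mathfrak{q}$ has a nonempty regular open subset;
\item equivalently, the $\mathsf{Reg}$-Q0 failure is avoided while $\mathsf{Reg}$-1 fails.
\end{itemize}
Such a $D$ cannot exist, because $\mathsf{Reg}$ satisfies \NC{}. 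The thickening therefore has to be non-split, so that $\embdim$ jumps on a non-closed set.

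This is where I would use Nishimura's example \cite{Nis12}: a Noetherian (local or semilocal) domain $R$ with the following properties:
\begin{itemize}
\item $R$ satisfies $\mathsf{Reg}$-Q0, or at least $\mathsf{HS}$-Q0;
\item $R$ has a height-one prime $\mathfrak{p}$ and infinitely many height-two primes $\mathfrak{q}_i\supset\mathfrak{p}$;
\item $R_{\mathfrak{q}_i}/\mathfrak{p}R_{\mathfrak{q}_i}$ is regular, but $\mathfrak{p}R_{\mathfrak{q}_i}$ needs two generators.
\end{itemize}
Then take $A=R/\mathfrak{p}^{(2)}$, or $R/\mathfrak{p}^2$ localized suitably. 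At $\mathfrak{p}$ this gives $A_{\mathfrak{p}}=R_{\mathfrak{p}}/\mathfrak{p}^2R_{\mathfrak{p}}$, a DVR modulo the square of its uniformizer, which is a hypersurface of codimension $1$. At the $\mathfrak{q}_i$, the ring $A_{\mathfrak{q}_i}$ has embedding dimension $\geq 3$ and dimension $1$, so it is not $\mathsf{HS}$. At the closed points the codimension jumps. The $\mathsf{HS}$-Q0 property of $A$ is inherited from that of $R$ by Proposition \ref{prop:Q0preserved} (2).

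Non-openness then follows from Lemma \ref{lem:nagatatop}. We have $\mathfrak{p}\in\mathsf{HS}(A)$, but every nonempty open subset of $V(\mathfrak{p})$ contains some $\mathfrak{q}_i$, because infinitely many height-one primes of the one-dimensional ring $A/\mathfrak{p}$ are dense in it. Hence no nonempty open subset of $V(\mathfrak{p})$ lies in $\mathsf{HS}(A)$.

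\textbf{Main obstacle.} The hard step is producing and verifying the base ring: a ring that is $\mathsf{HS}$-Q0, yet in which the minimal number of generators of $\mathfrak{p}$ jumps at a dense set of closed points of $V(\mathfrak{p})$. This cannot happen for a finitely generated module over a ring with good generic freeness behaviour unless $\mathfrak{p}/\mathfrak{p}^2$ fails to be generically free in the relevant sense. It requires a genuinely non-excellent example, and Nishimura's construction is the input I would take for this. I would first compute $\embdim(A_{\mathfrak{q}})$ via $\mathfrak{q}A_{\mathfrak{q}}/\mathfrak{q}^2A_{\mathfrak{q}}$ using the explicit generators of Nishimura's ideals. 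Only afterwards would I check that every quotient $A/\mathfrak{q}$ has a nonempty $\mathsf{HS}$ open subset.
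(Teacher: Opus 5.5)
Your reduction from arbitrary $c$ to $c=1$ is correct and is the paper's induction, just unrolled: you iterate $B\mapsto B[t]/(t^2)$ and use Proposition~\ref{prop:Q0preserved}~(4), Remark~\ref{rem:openness}~(2) and Lemma~\ref{lem:squarezero}~(2). The gap is in the case $c=1$, and it starts where you discard the split thickening. For $A=D[t]/(t^2)$ every quotient $A/\mathfrak{P}$ is isomorphic to some $D/\mathfrak{q}$. So $A$ is $\mathsf{HS}$-Q0 as soon as $D$ is $\mathsf{HS}$-Q0. You do not need $D$ to be $\mathsf{Reg}$-Q0; at $\mathfrak{q}=(0)$ you only need $\mathsf{HS}(D)$ to contain a nonempty open set. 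Meanwhile $\mathsf{HS}(A)$ corresponds to $\mathsf{Reg}(D)$ by Lemma~\ref{lem:squarezero}~(2). A domain $D$ that is $\mathsf{HS}$-Q0 but has non-open regular locus does not contradict \NC{} for $\mathsf{Reg}$, and it is exactly what the paper uses. Nishimura's hypersurface local domain $B$ has $\mathsf{HS}(B)=\Spec(B)$. Every $B/\mathfrak{q}$ with $\mathfrak{q}\neq 0$ is essentially of finite type over a field, hence excellent. And $\mathsf{Reg}(B)$ contains $(0)$ but no nonempty open set. Then $A=B[t]/(t^2)$ is $\mathsf{HS}$-Q0 while $\mathsf{HS}(A)\cong\mathsf{Reg}(B)$ is not open. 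Your conclusion that ``the thickening has to be non-split'' is therefore false.

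The non-split configuration you propose instead cannot exist in any Noetherian ring, so your ``main obstacle'' is impossible rather than merely hard. The module $\mathfrak{p}/\mathfrak{p}^2$ is finitely generated over the Noetherian domain $R/\mathfrak{p}$. Generic freeness, which needs no excellence and which the paper itself uses in the proof of Proposition~\ref{prop:conditions-imply-NC}, gives $f\notin\mathfrak{p}$ with $(\mathfrak{p}/\mathfrak{p}^2)_f$ free of rank $\dim_{\kappa(\mathfrak{p})}\mathfrak{p}R_{\mathfrak{p}}/\mathfrak{p}^2R_{\mathfrak{p}}$. This rank is $1$ under your assumption that $R_{\mathfrak{p}}$ is a DVR. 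By Nakayama, $\mu(\mathfrak{p}R_{\mathfrak{q}})=1$ for every $\mathfrak{q}\in V(\mathfrak{p})\cap D(f)$. So the primes where $\mathfrak{p}R_{\mathfrak{q}}$ needs two generators lie in a proper closed subset of $V(\mathfrak{p})$ and are never dense.

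Moreover, on $D(f)$ the ring $A=R/\mathfrak{p}^2$ is normally flat along $\mathfrak{p}A$, and $\mathfrak{p}A$ is locally principal there. The argument of Theorem~\ref{th:openness_hs} then applies: lift a regular system of parameters of $A_{\mathfrak{q}}/\mathfrak{p}A_{\mathfrak{q}}$, apply Lemma~\ref{lem:normallyflat}, and finish with Corollary~\ref{cor:hs_regular_deform}~(1). It shows that $A_{\mathfrak{q}}$ is a hypersurface wherever $A_{\mathfrak{q}}/\mathfrak{p}A_{\mathfrak{q}}$ is regular, which is exactly the condition you imposed at your $\mathfrak{q}_i$. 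Non-hypersurface points can therefore only come from primes where the reduced quotient is singular, that is, from a non-open regular locus of $A/\mathfrak{p}$. That is precisely the mechanism of the paper's example.

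Separately, if $R$ is local or semilocal and $R/\mathfrak{p}$ is one-dimensional, only finitely many primes strictly contain $\mathfrak{p}$. So your infinite family $\mathfrak{q}_i$ is also inconsistent with the setting you state.
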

\begin{proof}
    We first treat the case $c=1$. By Proposition \ref{prop:nagata_hs}, $\mathsf{HS}=\mathsf{CI}_{\leq 1}$ satisfies (A-1) and (A-2). Hence, by Proposition \ref{prop:conditions-imply-NC}, it is enough to show that $\mathsf{HS}$ does not satisfy (A-3).

    We have to construct a Noetherian ring $A$ which is $\mathsf{HS}$-Q0 and a prime ideal $P$ of $A$ such that $A_P$ is a hypersurface, $P^r = 0$ for some $r \geq 1$, $P^i/P^{i+1}$ is a free $A/P$-module for all $i \geq 0$ and we cannot find an open neighbourhood $U$ of $P$ in $\Spec(A)$ such that, for every $Q \in U$, if $A_Q/PA_Q$ is a hypersurface, then $A_Q$ is a hypersurface. 

    Fix a purely transcendental extension $K/K_0$ of countably infinite transcendence degree, where $K_0$ is a countable field. For instance, take $K = \QQ(x_1, x_2, \ldots)$. By \cite[Example 2.7, Example 2.11]{Nis12}, there exists a hypersurface local domain $B$ containing $K$ whose regular locus $\mathsf{Reg}(B)$ does not contain any nonempty open subset of $\Spec(B)$. Moreover, $B/\mathfrak{p}$ is essentially of finite type over $K$ for any $\mathfrak{p} \in \Spec(B) \setminus \{(0)\}$. We put $A = B[t]/(t^2)$ and $P = (t)$. For any $\mathfrak{p} \in \Spec(A)$, we can take a prime ideal $\mathfrak{q} \in B$ such that $\mathfrak{p} = \mathfrak{q}A + (t)$. Hence, we have $A/\mathfrak{p} = B/\mathfrak{q}$. If $\mathfrak{q} = 0$, then $A/\mathfrak{p} = B$ is a hypersurface. If $\mathfrak{q} \neq 0$, then $B/\mathfrak{q}$ is essentially of finite type over $K$. Hence, $A/\mathfrak{p}$ is an excellent domain. In particular, $\mathsf{Reg}(A/\mathfrak{p})$ is a nonempty open subset of $\Spec(A/\mathfrak{p})$. Therefore, $A$ is $\mathsf{HS}$-Q0. Moreover, we have $P^2 = 0$ and $P/P^2 \cong A/P$ as an $A/P$-module. Hence, $P^i/P^{i+1}$ is a free $A/P$-module for all $i \geq 0$. For any $Q \in \Spec(A)$ with $Q = \mathfrak{q}A + (t)$, we have $A_Q/PA_Q \cong (A/P)_{(Q/P)} \cong B_\mathfrak{q}$, which is a hypersurface. Assume that there exists an open neighbourhood $U$ of $P$ in $\Spec(A)$ such that, for every $Q \in U$, if $A_Q/PA_Q$ is a hypersurface, then $A_Q$ is a hypersurface. Since $\mathsf{Reg}(B)$ does not contain any nonempty open subset of $\Spec(B)$, we can take a prime ideal $\mathfrak{q} \in \Spec(B) \setminus \mathsf{Reg}(B)$ such that $Q = \mathfrak{q}A + (t) \in U$. Then, we have $A_Q = (B[t]/(t^2))_{\mathfrak{q}A + (t)} = B_{\mathfrak{q}}[t]/(t^2)$. Since $B_{\mathfrak{q}}[t]/(t^2)$ has codimension $2$ by Lemma \ref{lem:squarezero}, $A_Q$ is not a hypersurface. This contradicts the assumption. Hence, $\mathsf{HS}$ does not satisfy (A-3) in Definition \ref{def:conditions-for-NC} in general. In fact, the argument shows that $A$ satisfies $\mathsf{HS}$-Q0 and that $\mathsf{HS}(A)$ is not open in $\Spec(A)$. Thus, $A$ is a counterexample to \NC{} for $\mathsf{HS}$.
    
    Next, we show that $\mathsf{CI}_{\leq c}$ does not satisfy \NC{} in general for any $c \geq 1$. We now proceed by induction on $c$. The case $c = 1$ is already shown. Suppose that $B$ is a counterexample to \NC{} for $\mathsf{CI}_{\leq c-1}$ and put $A = B[t]/(t^2)$. Since $B$ satisfies $\mathsf{CI}_{\leq c-1}$-Q0, $A$ also satisfies $\mathsf{CI}_{\leq c-1}$-Q0 by Proposition \ref{prop:Q0preserved}. Moreover, since $\mathsf{CI}_{\leq c-1} \implies \mathsf{CI}_{\leq c}$, $A$ satisfies $\mathsf{CI}_{\leq c}$-Q0. However, $\mathsf{CI}_{\leq c}(A) = \mathsf{CI}_{\leq c-1}(B)$ (Lemma \ref{lem:squarezero} (2)) is not open in $\Spec(A)$ because $\mathsf{CI}_{\leq c-1}(B)$ is not open in $\Spec(B)$. Hence, $A$ is a counterexample to \NC{} for $\mathsf{CI}_{\leq c}$.
\end{proof}

\begin{remark}\label{rem:counterexring}
    According to \cite{Nis12}, if we use Example 2.7 in the proof of Theorem \ref{th:counterexampleNC}, then $B$ is a two-dimensional Noetherian local domain. We can prove that this ring is not a Nagata ring. On the other hand, if we take $B$ from Example 2.11, then $B$ is a three-dimensional Noetherian local Nagata domain. Hence, the counterexample $A$ in the proof of Theorem \ref{th:counterexampleNC} can be taken as a three-dimensional Noetherian local Nagata ring.
\end{remark}

\subsection{Recovery of openness}\label{subsec:recovery}

We have seen that $\mathsf{CI}_{\leq c}$ does not satisfy \NC{} in general for any $c \geq 1$. For singularities which satisfy \NC{}, such as $\mathsf{Reg}$, $\mathsf{CI}$, $\mathsf{Gor}$, and $\mathsf{CM}$, their loci are open in $\Spec(A)$ for any $\mathsf{Reg}$-Q0 ring $A$. In other words, let $A$ be a Noetherian ring which satisfies the following condition:
\begin{align*}
    \forall \mathfrak{p} \in \Spec(A), \mathsf{Reg}(A/\mathfrak{p}) \text{ contains a nonempty open subset of } \Spec(A/\mathfrak{p}).
\end{align*}
Then, $\mathsf{Reg}(A),\ \mathsf{CI}(A),\ \mathsf{Gor}(A)$, and $\mathsf{CM}(A)$ are open subsets of $\Spec(A)$. This is easily shown by \NC{}. Since $\mathsf{CI}_{\leq c}$ does not satisfy \NC{}, we cannot deduce the openness of $\mathsf{CI}_{\leq c}(A)$ for every $\mathsf{Reg}$-Q0 ring $A$ by the same argument. However, we can show that $\mathsf{CI}_{\leq c}(A)$ is an open subset of $\Spec(A)$ for any $\mathsf{Reg}$-Q0 ring $A$. We first recall a lifting result for regular sequences across a normally flat nilpotent thickening. 

\begin{lemma}[{\cite[Exercise 24.1]{Mat86}}]\label{lem:normallyflat}
    Let $A$ be a Noetherian ring, $x_1, \ldots, x_n$ a sequence of elements of $A$ and $I$ an ideal of $A$ such that $I^r = 0$ for some $r \geq 1$. Assume that $I^i/I^{i+1}$ is a free $A/I$-module for all $i \geq 0$. Then, $x_1, \ldots, x_n$ is an $A$-regular sequence if and only if $x_1, \ldots, x_n$ is an $A/I$-regular sequence.
\end{lemma}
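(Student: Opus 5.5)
The plan is to prove both directions by induction on $n$, using the $I$-adic filtration
\begin{align*}
    A = I^0 \supset I \supset I^2 \supset \cdots \supset I^r = 0,
\end{align*}
whose graded pieces $G_i = I^i/I^{i+1}$ are free $A/I$-modules, say $G_i \cong (A/I)^{\oplus e_i}$. We may discard trailing zero pieces and assume $I^{r-1} \neq 0$. If $I = 0$ there is nothing to prove, so we may also assume $r \geq 2$ and $e_{r-1} \geq 1$. Note that $A/(x_1,\ldots,x_n)A \neq 0$ if and only if $A/((x_1,\ldots,x_n)+I) \neq 0$, because $I$ is nilpotent and hence lies in every maximal ideal. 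So the nontriviality clause in the definition of a regular sequence transfers in both directions, and only the injectivity conditions need work.

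\emph{Case $n=1$.} Write $x = x_1$.

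\textbf{($\Leftarrow$)} Suppose $x$ is $A/I$-regular. Then $x$ is a nonzerodivisor on every $G_i$, since each $G_i$ is a direct sum of copies of $A/I$. We then use the short exact sequences
\begin{align*}
    0 \to I^{i+1} \to I^i \to G_i \to 0
\end{align*}
and the elementary fact that an element regular on the outer terms of such a sequence is regular on the middle term. Descending induction on $i$, starting from $I^{r-1} = G_{r-1}$, shows that $x$ is regular on every $I^i$, in particular on $I^0 = A$. The same argument applied to the quotients $I^i/I^j$ shows that $x$ is regular on every $A/I^j$.

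\textbf{($\Rightarrow$)} Suppose $x$ is $A$-regular but $xa \in I$ for some $a \notin I$. Since $I^{r-1} \cong (A/I)^{\oplus e_{r-1}}$ with $e_{r-1} \geq 1$, the element $\bar a \in A/I$ placed in one coordinate gives a nonzero element $b \in I^{r-1} \subset A$. This $b$ satisfies $xb = 0$, because $x\bar a = 0$ in $A/I$. This contradicts the regularity of $x$ on $A$.

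\emph{Inductive step.} Set $A' = A/x_1A$ and $I' = IA'$. The goal is to show that $A'$ and $I'$ again satisfy the hypotheses, over the base $A'/I' = A/(I + x_1A)$. Once this holds, the statement for $x_2,\ldots,x_n$ on $A'$ versus $A'/I'$ gives the result, since $x_1$ is regular on $A$ if and only if it is regular on $A/I$ by the case $n=1$.

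For the hypotheses, first note $I'^i = (I^i + x_1A)/x_1A$ and $I'^r = 0$. The key claim is
\begin{align*}
    I'^i/I'^{i+1} \cong G_i/x_1G_i \cong \bigl(A/(I + x_1A)\bigr)^{\oplus e_i}.
\end{align*}
This amounts to the equality $x_1A \cap I^i = x_1I^i$ for all $i$, i.e. the associated graded ring commutes with reduction modulo $x_1$. I would derive this equality from the fact that $x_1$ is a nonzerodivisor on $A/I^i$ for every $i$. If $x_1 a \in I^i$, then regularity of $x_1$ on $A/I^i$ forces $a \in I^i$, so $x_1 a \in x_1 I^i$.

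I expect this compatibility check to be the main obstacle. It is the only point where both the regularity of $x_1$ on all the truncations $A/I^i$ (obtained in the case $n=1$) and the freeness of all the graded pieces are used. Everything else is a routine snake-lemma induction.
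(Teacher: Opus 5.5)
Your proof is correct and complete in both directions. In the base case, the descending induction along $0 \to I^{i+1} \to I^i \to G_i \to 0$ handles $\Leftarrow$, and the copy of $A/I$ inside $I^{r-1} \cong (A/I)^{\oplus e_{r-1}}$ handles $\Rightarrow$. In the inductive step, you correctly deduce $x_1A \cap I^i = x_1 I^i$ from the regularity of $x_1$ on every $A/I^i$. Together with the modular law $I^i \cap (I^{i+1} + x_1A) = I^{i+1} + (I^i \cap x_1A)$, this gives $I'^i/I'^{i+1} \cong G_i/x_1G_i$, so the hypotheses do descend to $A/x_1A$ with the ideal $IA/x_1A$.

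The paper gives no proof of this lemma; it only cites Matsumura's Exercise 24.1, so there is no argument of the paper's to compare yours with. The paper only uses the direction $\Leftarrow$, in Theorem \ref{th:openness_hs}. The usual textbook route for that direction is the general fact that a sequence regular on every factor of a finite filtration of a module is regular on the module itself, proved by induction on $n$ via the snake lemma. In that route, your identity $x_1A \cap I^i = x_1 I^i$ (for all $i$) is equivalent to the injectivity of $I^{i+1}/x_1I^{i+1} \to I^i/x_1I^i$ that the snake lemma supplies. Packaging the step as descent of the full hypotheses has an advantage: the same induction also gives $\Rightarrow$, which the filtration lemma does not address. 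Note also that your argument never uses that $A$ is Noetherian.
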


\begin{theorem}\label{th:openness_hs}
    Let $A$ be a Noetherian ring which satisfies $\mathsf{Reg}$\textup{-Q0}. Then, $\mathsf{CI}_{\leq c}(A)$ is an open subset of $\Spec(A)$ for every $c \in \NN$.
\end{theorem}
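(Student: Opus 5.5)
We follow the proof of Proposition \ref{prop:conditions-imply-NC} (2), but verify the lifting step (A-3) directly, using the stronger hypothesis $\mathsf{Reg}$-Q0 in place of $\mathsf{CI}_{\leq c}$-Q0. By Proposition \ref{prop:nagata_hs}, $\mathsf{CI}_{\leq c}$ satisfies (A-1), so $\mathsf{CI}_{\leq c}(A)$ is stable under generalization. By Lemma \ref{lem:nagatatop}, it therefore suffices to show that for every $P \in \mathsf{CI}_{\leq c}(A)$ the locus $\mathsf{CI}_{\leq c}(A)$ contains a nonempty open subset of $V(P)$. Exactly as in that proof, we may localize at an element $a \notin P$ and then pass to $A/I$. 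Here $I$ is generated by a linearly independent system of parameters of $A_P$, which exists by Lemma \ref{lem:linearly_indep}. Proposition \ref{prop:nagata_hs} gives (A-2) for $\mathsf{CI}_{\leq c}$, so this reduction is legitimate. By Proposition \ref{prop:Q0preserved}, $\mathsf{Reg}$-Q0 survives both operations. By Corollary \ref{cor:hs_regular_deform} (2), $A_P/IA_P$ is still $\mathsf{CI}_{\leq c}$. We are thus reduced to the case $\sqrt{0}=P$, and shrinking further we may assume $P^r=0$.

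Next we shrink $\Spec(A)$ to a neighbourhood of $P$ on which three conditions hold.
\begin{itemize}
\item[(a)] $A/P$ is regular. This uses $\mathsf{Reg}$-Q0 for $\mathfrak{p}=P$.
\item[(b)] Each $P^i/P^{i+1}$ is a free $A/P$-module, by generic freeness.
\item[(c)] $A$ is a complete intersection. The set $\mathsf{CI}(A)$ is open because $\mathsf{CI}$ satisfies \NC{} and $A$ is $\mathsf{Reg}$-Q0 (Proposition \ref{prop:NCandExcellent}), and it contains $P$.
\end{itemize}
It then remains to control the codimension. For $Q \in V(P)$ set $d=\dim A_Q=\dim (A/P)_Q$. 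Since $(A/P)_Q$ is regular, we can choose $y_1,\dots,y_d \in Q$ whose images form a regular system of parameters of $(A/P)_Q$. By Lemma \ref{lem:normallyflat}, applied to $A_Q$ and the normally flat ideal $PA_Q$, these $y_i$ form an $A_Q$-regular sequence. Their images in $QA_Q/Q^2A_Q$ are linearly independent, since they are already independent modulo $P$. Proposition \ref{prop:cibounded_regseq} with $s=n=d$ then gives
\begin{align*}
\embcodim(A_Q)=\embcodim\bigl(A_Q/(y_1,\dots,y_d)\bigr).
\end{align*}
The ring $A_Q/(y)$ is Artinian, so its codimension is its embedding dimension $\dim_{\kappa(Q)} \bar{\mathfrak{m}}/\bar{\mathfrak{m}}^2$. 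Because $A_Q/(y) \otimes A/P$ is a field, its maximal ideal is the image of $PA_Q$. Therefore
\begin{align*}
\embcodim(A_Q)=\dim_{\kappa(Q)} \bigl(PA_Q/(P^2A_Q+(y)P A_Q + (y)\cap PA_Q)\bigr)\otimes\kappa(Q).
\end{align*}
Using freeness of $P/P^2$ together with the regularity of the $y$ on $A/P$, we expect this to equal $\rank_{A/P}(P/P^2) = \embdim(A_P)=\embcodim(A_P)\le c$. The needed fact is $(y)\cap P = (y)P$, which follows from normal flatness since $y$ is $A/P$-regular; heuristically $A \approx \operatorname{gr}_P(A)$ fibrewise.

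This last identification is the main obstacle. We must check that $\bigl(P/(P^2+(y)\cap P)\bigr)\otimes \kappa(Q) \cong (P/P^2)\otimes_{A/P}\kappa(Q)$. We would first try to show that $y$ is regular on each $\operatorname{gr}$-piece and hence that $(y)\cap P^i = (y)P^i$, by induction on $i$ downward from $r$. This is essentially the content of Lemma \ref{lem:normallyflat}. Granting it, we conclude that every $Q$ in the shrunken neighbourhood intersected with $V(P)$ lies in $\mathsf{CI}_{\leq c}(A)$, which completes the proof.
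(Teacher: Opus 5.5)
Your proof is essentially the paper's argument. You use the same reduction through the proof of Proposition~\ref{prop:conditions-imply-NC}~(2) to a nilpotent, normally flat $P$, the same shrinking so that $A/P$ is regular and $A$ is a complete intersection, and the same lift of a regular system of parameters of $(A/P)_Q$ to an $A_Q$-regular sequence $y$ via Lemma~\ref{lem:normallyflat}.

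As written, though, the proof is incomplete: you flag the identity $(y)\cap P=(y)P$ as the main obstacle and then simply grant it. That step is not needed. You only need $\embcodim(A_Q)\le c$, not equality with $e=\rank_{A/P}(P/P^2)$. Since $QA_Q=PA_Q+(y)$ and $(y)PA_Q\subseteq (y)\cap PA_Q$, your module $PA_Q/(P^2A_Q+(y)\cap PA_Q)$ is a quotient of $PA_Q/(P^2A_Q+(y)PA_Q)=(PA_Q/P^2A_Q)\otimes_{A_Q}\kappa(Q)\cong\kappa(Q)^{\oplus e}$. Hence $\embdim(A_Q/(y))\le e=\embdim(A_P)\le c$, and together with your condition (c) this closes the argument.

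The paper makes the same point as $\mu(PA_Q)\le e$: lift a basis of $P/P^2$ and use $P^r=0$. It then invokes Corollary~\ref{cor:hs_regular_deform}~(1), which only requires $\embcodim(A_Q)\le\embcodim(A_Q/(y))$. So your linear-independence check and the exact computation of the codimension are superfluous. The identity $(y)\cap PA_Q=(y)PA_Q$ is in fact true, by writing a relation $\sum_j \bar a_j\bar y_j=0$ in $(A/P)_Q$ as a combination of Koszul relations, but you do not need it.
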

\begin{proof}
    By repeating the proof of Proposition \ref{prop:conditions-imply-NC} for the fixed ring $A$, it is enough to verify the following analogue of (A-3). More precisely, it is enough to show that, for any Noetherian ring $A$ which satisfies $\mathsf{Reg}$-Q0, any prime ideal $P$ of $A$ such that $A_P$ is $\mathsf{CI}_{\leq c}$, and any integer $r \geq 1$ such that $P^r = 0$ and $P^i/P^{i+1}$ is a free $A/P$-module for all $i \geq 0$, there exists an open neighbourhood $U$ of $P$ in $\Spec(A)$ such that, for every $Q \in U$, if $A_Q/PA_Q$ is $\mathsf{CI}_{\leq c}$, then $A_Q$ is $\mathsf{CI}_{\leq c}$. Since $A$ is $\mathsf{Reg}$-Q0, we can take an open neighbourhood $U_1$ of $P$ in $\Spec(A)$ such that $A_Q/PA_Q$ is regular for every $Q \in U_1$. Moreover, since $\mathsf{CI}(A)$ is open in $\Spec(A)$, we can take an open neighbourhood $U_2$ of $P$ in $\Spec(A)$ such that, for every $Q \in U_2$, $A_Q$ is $\mathsf{CI}$. We put $U = U_1 \cap U_2$. We have to show that $A_Q$ is $\mathsf{CI}_{\leq c}$ for any $Q \in U$. Fix $Q \in U$. Since $P \in \mathsf{CI}_{\leq c}(A)$, we have
    \begin{align*}
        \embdim(A_P) - \dim(A_P) \leq c.
    \end{align*}
    Since $A_P$ is zero-dimensional, we have $\dim_{\kappa(P)}PA_P/P^2A_P \leq c$. On the other hand, $P/P^2 = (A/P)^{\oplus e}$ for some $e \geq 0$. Tensoring with $A_Q$ over $A$, we obtain
    \begin{align*}
        PA_Q/P^2A_Q = (A_Q/PA_Q)^{\oplus e}.
    \end{align*}
    Taking $Q = P$, we have $\dim_{\kappa(P)}PA_P/P^2A_P = e$. Hence, we have $e \leq c$. Moreover, we have
    \begin{align*}
        PA_Q &= z_1 A_Q + \cdots + z_e A_Q + P^2A_Q \\ 
        &= z_1 A_Q + \cdots + z_e A_Q + P^3A_Q \\
        &\vdots \\
        &= z_1 A_Q + \cdots + z_e A_Q. \\
    \end{align*}
    Hence, we have $\mu_{A_Q}(PA_Q) \leq e \leq c$. Choose $x_1, \ldots, x_d$ $(d = \height(Q))$ in $A_Q$ such that their images in $A_Q/PA_Q$ form a regular system of parameters. By Lemma \ref{lem:normallyflat}, we find that $x_1, \ldots, x_d$ form a regular sequence in $A_Q$. Let $B = A_Q/(x_1, \ldots, x_d)A_Q$. We have $\dim(B) = 0$ and $(x_1, \ldots, x_d) + PA_Q = QA_Q$. Then, 
    \begin{align*}
        \mathfrak{m}_B = \frac{PA_Q + (x_1, \ldots, x_d)}{(x_1, \ldots, x_d)}
    \end{align*}
    is a maximal ideal of $B$. We also have $\mu_B(\mathfrak{m}_B) \leq c$. Thus, $B$ is a complete intersection local ring with
    \begin{align*}
        \embdim(B) - \dim(B) = \embdim(B) \leq c.
    \end{align*}
    Therefore, Corollary \ref{cor:hs_regular_deform} (1) shows that $A_Q$ is $\mathsf{CI}_{\leq c}$. 
\end{proof}

Clearly, an excellent ring satisfies $\mathsf{Reg}$-Q0. Hence, we have the following corollary. This is analogous to \cite[Corollary 3.3]{GM78}.

\begin{corollary}\label{cor:exopen1}
    Let $A$ be an excellent ring. Then, $\mathsf{CI}_{\leq c}(A)$ is an open subset of $\Spec(A)$ for any $c \geq 0$.
\end{corollary}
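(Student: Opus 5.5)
The plan is to reduce Corollary \ref{cor:exopen1} directly to Theorem \ref{th:openness_hs}. By that theorem, it suffices to show that every excellent ring $A$ satisfies $\mathsf{Reg}$-Q0. In other words, for every $\mathfrak{p} \in \Spec(A)$ we must show that $\mathsf{Reg}(A/\mathfrak{p})$ contains a nonempty open subset of $\Spec(A/\mathfrak{p})$.

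First, we use that excellence passes to quotients: $A/\mathfrak{p}$ is again excellent, and in particular J-2, hence J-1. So $\mathsf{Reg}(A/\mathfrak{p})$ is open in $\Spec(A/\mathfrak{p})$. Second, we check that this open set is nonempty. Since $A/\mathfrak{p}$ is a domain, the prime $(0)$ has localization equal to the fraction field $\kappa(\mathfrak{p})$, which is regular. Hence $(0) \in \mathsf{Reg}(A/\mathfrak{p})$. Combining the two steps gives $\mathsf{Reg}$-Q0, and Theorem \ref{th:openness_hs} then yields that $\mathsf{CI}_{\leq c}(A)$ is open for every $c \geq 0$.

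The same argument uses only the J-2 part of excellence. So it applies verbatim to quasi-excellent rings, matching the remark in the introduction.

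There is no real obstacle here. The only point needing care is recalling that J-2 implies J-1 for every quotient by a prime; this is immediate from the definition, since $A/\mathfrak{p}$ is a finite type $A$-algebra. Alternatively, one can cite \cite[\S 32]{Mat86}, which states that excellent rings are J-2 and that J-2 is inherited by finitely generated algebras.
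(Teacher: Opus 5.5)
Your proof is correct and follows the paper's route: the paper also deduces this corollary from Theorem \ref{th:openness_hs} by observing that excellent rings satisfy $\mathsf{Reg}$-Q0. Your J-2 argument, that $\mathsf{Reg}(A/\mathfrak{p})$ is open and contains $(0)$, simply spells out the step the paper calls clear. Your remark that only J-2 is used likewise matches the paper's extension to quasi-excellent rings.
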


In particular, since $\mathsf{CI}(A) = \bigcup_{c \geq 0} \mathsf{CI}_{\leq c}(A)$, we can recover Corollary 3.3 of \cite{GM78} from Theorem \ref{th:openness_hs}.

\section{The codimension function on excellent schemes}\label{sec:constructible}

We study the codimension function on excellent schemes and apply its constructibility to complete intersections of bounded codimension. Briefly, we discuss the following map for a scheme $X$:
\begin{align*}
    C \colon X \to \NN \colon x \mapsto \embdim(\mathcal{O}_{X, x}) - \dim(\mathcal{O}_{X, x}).
\end{align*}
As a result, we give another geometric proof of Corollary \ref{cor:exopen1}. 

\begin{definition}\label{def:uppersemiconti}
    Let $X$ be a topological space and $F \colon X \rightarrow \NN$ be a function. We say that $F$ is \emph{upper semicontinuous} on $X$ if, for any $n \in \NN$, the set $F^{-1}(\NN_{\leq n}) = \{x \in X \mid F(x) \leq n\}$ is open in $X$.
\end{definition}

The upper semicontinuity is checked by an open covering of $X$.

\begin{lemma}\label{lem:uppersemiconti_opencover}
    Let $X$ be a topological space and $F \colon X \rightarrow \NN$ be a function. Fix an open covering $\{U_i\}_{i \in I}$ of $X$. $F$ is upper semicontinuous on $X$ if and only if $F|_{U_i}$ is upper semicontinuous on $U_i$ for all $i \in I$. 
\end{lemma}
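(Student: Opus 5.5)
The plan is to unwind Definition \ref{def:uppersemiconti} on both sides and use only the elementary fact that openness in a topological space is a local property with respect to an open covering. For each $n \in \NN$ I write $S_n = F^{-1}(\NN_{\leq n})$. Since $(F|_{U_i})^{-1}(\NN_{\leq n}) = S_n \cap U_i$, upper semicontinuity of $F|_{U_i}$ on $U_i$ says exactly that $S_n \cap U_i$ is open in $U_i$ for every $n$. So the statement reduces to: $S_n$ is open in $X$ if and only if $S_n \cap U_i$ is open in $U_i$ for all $i \in I$.

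For the forward direction, assume $F$ is upper semicontinuous. Then $S_n$ is open in $X$, and $S_n \cap U_i$ is open in $U_i$ by the definition of the subspace topology. Hence $F|_{U_i}$ is upper semicontinuous for each $i$.

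For the converse, assume each $S_n \cap U_i$ is open in $U_i$. Because $U_i$ is open in $X$, an open subset of $U_i$ is also open in $X$, so each $S_n \cap U_i$ is open in $X$. Since $\{U_i\}_{i \in I}$ covers $X$, we have
\begin{align*}
    S_n = \bigcup_{i \in I} (S_n \cap U_i),
\end{align*}
which is a union of open subsets of $X$ and therefore open. As $n$ is arbitrary, $F$ is upper semicontinuous on $X$.

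There is no real obstacle here. The only point that needs a moment's care is the step using that each $U_i$ is open in $X$, so that relative openness in $U_i$ becomes openness in $X$; this is exactly why the lemma is stated for an open covering and not an arbitrary one.
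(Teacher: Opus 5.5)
Your proof is correct and follows the paper's argument essentially verbatim. The forward direction restricts the open set $F^{-1}(\NN_{\leq n})$ to each $U_i$, and the converse writes $F^{-1}(\NN_{\leq n})$ as the union of the sets $F^{-1}(\NN_{\leq n}) \cap U_i$; you also make explicit the step the paper leaves implicit, namely that a subset open in the open subspace $U_i$ is open in $X$.
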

\begin{proof}
    Assume that $F$ is upper semicontinuous on $X$. Then, for any $n \in \NN$, we have $F^{-1}(\NN_{\leq n})$ is open in $X$. Hence, $F|_{U_i}^{-1}(\NN_{\leq n}) = F^{-1}(\NN_{\leq n}) \cap U_i$ is open in $U_i$ for all $i \in I$. Therefore, $F|_{U_i}$ is upper semicontinuous on $U_i$ for all $i \in I$.

    Assume that $F|_{U_i}$ is upper semicontinuous on $U_i$ for all $i \in I$. Then, for any $n \in \NN$, we have $F|_{U_i}^{-1}(\NN_{\leq n})$ is open in $U_i$ for all $i \in I$. Hence, we have
    \begin{align*}
        F^{-1}(\NN_{\leq n}) = \bigcup_{i \in I} F|_{U_i}^{-1}(\NN_{\leq n}),
    \end{align*}
    which is open in $X$. Therefore, $F$ is upper semicontinuous on $X$.
\end{proof}

Ragusa, Tarrio, Rodicio, and Avramov studied complete intersections using Andr\'{e}--Quillen homology. They defined various invariants of Noetherian local rings using Andr\'{e}--Quillen homology and proved that these invariants detect complete intersections. In particular, they studied the upper semicontinuity of mappings derived from these invariants. 

\begin{definition}\label{def:delta}
    Let $(A, \mathfrak{m}, k)$ be a Noetherian local ring. 
    \begin{enumerate}
        \item The \emph{deviations} $\varepsilon_n(A)$ are defined by the formal power series identity:
        \begin{align*}
            \sum_{i \in \NN} (\dim_k \Tor_i^A(k,k)) t^i = \prod_{i \in \NN} (1 - (-t)^{i+1})^{(-1)^i \varepsilon_i(A)};
        \end{align*}
        see \cite{Avr77} or \cite[CHAPTER THREE \S 1]{GL69}. 
        \item We define the \emph{complete intersection defect} $d(A)$ of $A$ by
        \begin{align*}
            d(A) = \dim(A) - \varepsilon_0(A) + \varepsilon_1(A);
        \end{align*}
        see \cite{ATR89}. 
        \item For $n \in \NN$, we define the \emph{deviation} $\delta_n(A)$ by
        \begin{align*}
            \delta_n(A) = \dim_k H_n(A, k, k),
        \end{align*}
        where $H_n(A, k, k)$ is Andr\'{e}--Quillen homology; see \cite[DEFINITION 1.5]{Rag80}.
    \end{enumerate}
\end{definition}

\begin{remark}\label{rem:invariants}
    The definitions of the invariant $\varepsilon_n(A)$ in \cite{Avr77}, \cite{GL69}, \cite{ATR89}, and \cite{Rag80} are equivalent. Moreover, according to \cite[REMARK 1.8]{Rag80}, if $n \leq \pi(A)$, then we have $\delta_n(A) = \varepsilon_{n-1}(A)$, where
    \begin{align*}
        \pi(A) = 
        \begin{dcases}
            \infty & (\operatorname{Char} k = 0) \\
            2p & (\operatorname{Char} k = p > 0)
        \end{dcases}
    \end{align*}
    for a Noetherian local ring $(A, \mathfrak{m}, k)$. In particular, we always have $\delta_2(A) = \varepsilon_1(A)$.

    By \cite[LEMMA 3.1.2 (ii)]{GL69}, we have $\varepsilon_0(A) = \embdim(A)$. As we can see from this, the definition of $\varepsilon_n(A)$ in \cite[\S 21]{Mat86}, which is given by the dimension of Koszul homology over $k$, is not equivalent to that of Definition \ref{def:delta} (1). 
\end{remark}

Let $(X, \mathcal{O}_X)$ be a locally Noetherian scheme. For any $x \in X$, we denote by $\mathcal{O}_{X,x}$ the stalk of $\mathcal{O}_X$ at $x$. We can define the functions $D$ and $\Delta_n$ on $X$ using the invariants in Definition \ref{def:delta} as follows. 

\begin{definition}\label{def:delta_scheme}
    Let $(X, \mathcal{O}_X)$ be a locally Noetherian scheme. For any $x \in X$, we define the functions $D$ and $\Delta_n$ on $X$ by
    \begin{align*}
        D &\colon X \rightarrow \NN \colon x \mapsto d(\mathcal{O}_{X,x}) = \dim(\mathcal{O}_{X,x}) - \varepsilon_0(\mathcal{O}_{X,x}) + \varepsilon_1(\mathcal{O}_{X,x}) \\ 
        \Delta_n &\colon X \rightarrow \NN \colon x \mapsto \delta_n(\mathcal{O}_{X,x}) = \dim_{k(x)} H_n(\mathcal{O}_{X,x}, k(x), k(x)),
    \end{align*}
    where $k(x)$ is the residue field of $\mathcal{O}_{X,x}$.
\end{definition}

Ragusa, Tarrio, Rodicio, and Avramov obtained upper semicontinuity results for the functions $D$ and $\Delta_n$ on various schemes, including excellent schemes. The following lemma is easy to check using Lemma \ref{lem:uppersemiconti_opencover} and their results.

\begin{lemma}\label{lem:uppersemiconti}
    Let $(X, \mathcal{O}_X)$ be an excellent scheme \textup{(\cite[(7.8.5)]{Gro65})}. 
    \begin{enumerate}
        \item The function $D$ is upper semicontinuous on $X$; see \cite[THEOREM]{ATR89}.
        \item For any $n>1$, the function $\Delta_n$ is upper semicontinuous on $X$; see \cite[PROPOSITION 3.6]{Rag80}.
    \end{enumerate}
\end{lemma}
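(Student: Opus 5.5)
The statement is local in nature, so I would reduce it to the affine case and then quote the cited theorems. Upper semicontinuity of a function $F \colon X \to \NN$ can be checked on an open cover by Lemma \ref{lem:uppersemiconti_opencover}. Cover the excellent scheme $X$ by affine opens $U_i = \Spec(A_i)$ with each $A_i$ an excellent ring. This is possible because excellence of a scheme means, by \cite[(7.8.5)]{Gro65}, that it is locally Noetherian and admits such a cover. Then it suffices to prove that $D|_{U_i}$ and $\Delta_n|_{U_i}$ are upper semicontinuous on $\Spec(A_i)$.

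The next point is that these restricted functions depend only on the stalks. For $x \in U_i$ corresponding to $\mathfrak{p} \in \Spec(A_i)$ we have $\mathcal{O}_{X,x} \cong (A_i)_{\mathfrak{p}}$ with residue field $\kappa(\mathfrak{p})$. Hence
\begin{align*}
    D(x) = d((A_i)_{\mathfrak{p}}), \qquad \Delta_n(x) = \dim_{\kappa(\mathfrak{p})} H_n((A_i)_{\mathfrak{p}}, \kappa(\mathfrak{p}), \kappa(\mathfrak{p})).
\end{align*}
So $D|_{U_i}$ and $\Delta_n|_{U_i}$ are exactly the functions $\mathfrak{p} \mapsto d((A_i)_{\mathfrak{p}})$ and $\mathfrak{p} \mapsto \delta_n((A_i)_{\mathfrak{p}})$ on $\Spec(A_i)$.

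For (1), I would invoke \cite[THEOREM]{ATR89}. It asserts that for an excellent (indeed more general) Noetherian ring $A$, the map $\mathfrak{p} \mapsto d(A_{\mathfrak{p}})$ is upper semicontinuous on $\Spec(A)$. Applying it to each $A_i$ gives the claim on each $U_i$, and Lemma \ref{lem:uppersemiconti_opencover} then gives it on $X$.

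For (2), I would argue the same way with \cite[PROPOSITION 3.6]{Rag80}, which gives upper semicontinuity of $\mathfrak{p} \mapsto \delta_n(A_{\mathfrak{p}})$ for $n > 1$ on a suitable class of Noetherian rings including excellent ones.

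The main obstacle is bookkeeping rather than mathematics. Some of these sources phrase their hypotheses differently, for example as quotients of regular or Gorenstein rings, or as rings with open regular locus in all finite-type algebras. So I would check carefully that excellent rings fall within the stated hypotheses. If a reference only treats homomorphic images of excellent regular rings, I would reduce further. Upper semicontinuity on $\Spec(A)$ can be checked locally, and since $\Spec(A)$ is Noetherian, Lemma \ref{lem:nagatatop} lets me check it through openness of each sublevel set. I would then use the version of their result for rings satisfying the openness conditions on formal fibers that excellence provides. I would also confirm that their invariants agree with those in Definition \ref{def:delta}, using Remark \ref{rem:invariants}.
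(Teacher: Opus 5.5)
Your proposal is correct and takes the same route as the paper. The paper also applies Lemma \ref{lem:uppersemiconti_opencover} to a cover of $X$ by spectra of excellent rings and then invokes \cite[THEOREM]{ATR89} and \cite[PROPOSITION 3.6]{Rag80} directly, so the contingency reduction in your last paragraph is not needed.
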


It is clear that $d(A)$ and $\delta_n(A)$ are non-negative integers for every Noetherian local ring $A$. Hence, on a quasi-compact excellent scheme, the function $D$ and, for each fixed $n>1$, the function $\Delta_n$ take only finitely many values. In other words, we have the following lemma. 

\begin{lemma}\label{lem:bounded}
    Let $(X, \mathcal{O}_X)$ be a quasi-compact excellent scheme and $n>1$. Then, the functions $D$ and $\Delta_n$ are bounded above on $X$, i.e., there exists $N \in \NN$ such that $D(x), \Delta_n(x) \leq N$ for all $x \in X$. In particular, the functions $D$ and $\Delta_n$ take only finitely many values on $X$.
\end{lemma}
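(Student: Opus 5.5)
The plan is to use upper semicontinuity (Lemma \ref{lem:uppersemiconti}) together with quasi-compactness to show that only finitely many values occur. Fix $n>1$ and let $F$ be either $D$ or $\Delta_n$. For each $m \in \NN$ set
\begin{align*}
    U_m = F^{-1}(\NN_{\leq m}) = \{x \in X \mid F(x) \leq m\}.
\end{align*}
By Lemma \ref{lem:uppersemiconti}, each $U_m$ is open in $X$. The family satisfies $U_0 \subseteq U_1 \subseteq U_2 \subseteq \cdots$. Since $F$ takes values in $\NN$, every $x \in X$ lies in $U_{F(x)}$. Hence $\{U_m\}_{m \in \NN}$ is an increasing open covering of $X$.

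Since $X$ is quasi-compact, finitely many of these sets already cover $X$, say $U_{m_1}, \ldots, U_{m_k}$. Put $N_F = \max\{m_1, \ldots, m_k\}$. Because the family is increasing, $X = U_{N_F}$, so $F(x) \leq N_F$ for all $x \in X$. Taking $N = \max\{N_D, N_{\Delta_n}\}$ gives a single bound that works for both functions. A function with values in $\NN$ that is bounded by $N$ takes at most $N+1$ values, which gives the last assertion.

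The only points to check are that the values of $D$ really are non-negative integers, which the paper notes before the lemma, and that Lemma \ref{lem:uppersemiconti} applies to the whole scheme $X$, which is how it is stated. So I do not expect a real obstacle here. The substantive input is entirely the semicontinuity theorems of \cite{ATR89} and \cite{Rag80}.
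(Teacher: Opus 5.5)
Your proposal is correct and follows essentially the same route as the paper: write $X$ as the increasing union of the open sets $F^{-1}(\NN_{\leq m})$ given by Lemma \ref{lem:uppersemiconti}, pass to a finite subcover by quasi-compactness, and conclude $X = F^{-1}(\NN_{\leq N})$. Your explicit choice of one common bound $N = \max\{N_D, N_{\Delta_n}\}$ for both functions is a small tidying-up of what the paper leaves implicit.
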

\begin{proof}
    Let $\NN_{\leq m} = \{k \in \NN \mid k \leq m\}$ for any $m \in \NN$. We have
    \begin{align*}
        X = \bigcup_{m \in \NN} D^{-1}(\NN_{\leq m}). 
    \end{align*}
    By Lemma \ref{lem:uppersemiconti}, $D^{-1}(\NN_{\leq m})$ is open in $X$ for any $m \in \NN$. Since $X$ is quasi-compact, there exists $N \in \NN$ such that
    \begin{align*}
        X = \bigcup_{m=0}^{N} D^{-1}(\NN_{\leq m}) = D^{-1}(\NN_{\leq N}).
    \end{align*}
    Hence, $D$ is bounded above on $X$. The same argument shows that $\Delta_n$ is bounded above on $X$. 
\end{proof}

It is natural to ask whether $C \colon x \mapsto \embdim(\mathcal{O}_{X, x}) - \dim(\mathcal{O}_{X, x})$ is upper semicontinuous on $X$ because it measures the codimension of a local ring and, when restricted to the complete intersection locus, distinguishes complete intersections of bounded codimension. However, this function is not upper semicontinuous in general even when $X$ is an affine excellent scheme. For example, let $A = k[[s,t,u]]/((s) \cap (t, u)^2)$ and $X = \Spec(A)$. Then, we have $\embdim(A) - \dim(A) = 3 - 2 = 1$ and $\embdim(A_{(t,u)}) - \dim(A_{(t,u)}) = 2 - 0 = 2$. Hence, the set $C^{-1}(\{k \in \NN \mid k \leq 1\})$ is not stable under generalization. In particular, this set is not open in $X$ by Nagata's topological criterion (Lemma \ref{lem:nagatatop}). 

However, we can claim the following theorem. 

\begin{theorem}\label{th:constructible}
    Let $(X,\mathcal{O}_X)$ be a quasi-compact excellent scheme. Then, for the function
    \begin{align*}
        C \colon X \longrightarrow \NN, \qquad x \longmapsto \embdim(\mathcal{O}_{X,x})-\dim(\mathcal{O}_{X,x})
    \end{align*}
    on $X$, the subset
    \begin{align*}
        C^{-1}(\NN_{\leq n}) = \{x\in X\mid C(x)\leq n\}
    \end{align*}
    is constructible in $X$ for every $n \in \NN$.
\end{theorem}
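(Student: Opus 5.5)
The plan is to write $C$ as a difference of two functions that are already known to be upper semicontinuous on excellent schemes. Then we use that each of them takes only finitely many values, which lets us partition $X$ into finitely many locally closed pieces on which $C$ is constant.

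First, we express $C$ in terms of the invariants of Definition \ref{def:delta}. For every $x \in X$, put $A = \mathcal{O}_{X,x}$. By Remark \ref{rem:invariants} we have $\varepsilon_0(A) = \embdim(A)$ and $\delta_2(A) = \varepsilon_1(A)$. Hence
\begin{align*}
    D(x) = \dim(A) - \embdim(A) + \Delta_2(x),
\end{align*}
that is,
\begin{align*}
    C(x) = \embdim(\mathcal{O}_{X,x}) - \dim(\mathcal{O}_{X,x}) = \Delta_2(x) - D(x)
\end{align*}
for all $x \in X$. The one point to check carefully is that the identification $\delta_2 = \varepsilon_1$ holds in every characteristic. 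Remark \ref{rem:invariants} asserts exactly this, since $2 \leq \pi(A)$ always.

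Second, by Lemma \ref{lem:uppersemiconti}, both $D$ and $\Delta_2$ are upper semicontinuous on $X$. By Lemma \ref{lem:bounded}, they take only finitely many values, since $X$ is quasi-compact. For $a, b \in \NN$ put
\begin{align*}
    S_{a,b} = \bigl(D^{-1}(\NN_{\leq a}) \setminus D^{-1}(\NN_{\leq a-1})\bigr) \cap \bigl(\Delta_2^{-1}(\NN_{\leq b}) \setminus \Delta_2^{-1}(\NN_{\leq b-1})\bigr),
\end{align*}
with the convention $\NN_{\leq -1} = \emptyset$. Each $S_{a,b}$ is an intersection of an open set with a closed set, hence locally closed. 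Only finitely many of them are nonempty, and together they cover $X$. On $S_{a,b}$ the function $C$ is constantly equal to $b - a$. Therefore
\begin{align*}
    C^{-1}(\NN_{\leq n}) = \bigcup_{b - a \leq n} S_{a,b}
\end{align*}
is a finite union of locally closed subsets.

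Finally, we conclude constructibility. An excellent scheme is locally Noetherian, and $X$ is quasi-compact, so $X$ is a Noetherian scheme and its underlying space is Noetherian. In a Noetherian topological space every open subset is retrocompact. Consequently every locally closed subset is constructible, and so is a finite union of them. This gives that $C^{-1}(\NN_{\leq n})$ is constructible for every $n$.

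I expect the main obstacle to be the first step: making sure the sign conventions and the definitions of $\varepsilon_0$, $\varepsilon_1$ and $\delta_2$ coming from the different sources match, so that $C = \Delta_2 - D$ holds exactly. The remaining steps are formal consequences of Lemmas \ref{lem:uppersemiconti} and \ref{lem:bounded}.
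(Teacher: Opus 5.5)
Your proposal is correct and follows essentially the same route as the paper: write $C=\Delta_2-D$, use the upper semicontinuity and boundedness of $D$ and $\Delta_2$, and express $C^{-1}(\NN_{\leq n})$ as a finite union of the locally closed level sets $S_{a,b}$ with $b-a\leq n$. Your remark that $X$ is Noetherian, so that locally closed sets are constructible, makes explicit a step the paper leaves implicit.
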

\begin{proof}
    By Lemma \ref{lem:uppersemiconti}, we have that $D$ and $\Delta_2$ are upper semicontinuous on $X$. Moreover, by Remark \ref{rem:invariants}, we have $C(x) = \Delta_2(x) - D(x)$ for any $x \in X$ (note that this function is not necessarily upper semicontinuous). Hence, for any $n \in \NN$, we have
    \begin{align*}
        C^{-1}(\NN_{\leq n}) &= \{x \in X \mid C(x) \leq n\} \\ 
        &= \bigcup_{r,s \in \NN, r-s \leq n} \{x \in X \mid D(x) = s, \Delta_2(x) = r\} \\ 
        &= \bigcup_{r,s \in \NN, r-s \leq n} (D^{-1}(\NN_{\leq s}) \setminus D^{-1}(\NN_{\leq s-1})) \cap (\Delta_2^{-1}(\NN_{\leq r}) \setminus \Delta_2^{-1}(\NN_{\leq r-1})).
    \end{align*}
    This union is finite by Lemma \ref{lem:bounded}. Moreover, by Lemma \ref{lem:uppersemiconti}, $D^{-1}(\NN_{\leq s})$ and $\Delta_2^{-1}(\NN_{\leq r})$ are open in $X$. Hence, $C^{-1}(\NN_{\leq n})$ is a constructible subset of $X$. Therefore, we have shown that $C^{-1}(\NN_{\leq n})$ is constructible in $X$ for every $n \in \NN$.
\end{proof}

We now obtain the following corollary, which is the main result of this section. Although $\mathsf{CI}_{\leq c}$ does not satisfy (NC) in general for $c \geq 1$, the constructibility of $C$ yields another proof that $\mathsf{CI}_{\leq c}(A)$ is open for every excellent ring $A$. Before proving it, we use the following lemma. 

\begin{lemma}[{\cite[\href{https://stacks.math.columbia.edu/tag/0542}{Tag 0542}]{stacks-project}}]\label{lem:construct}
    Let $X$ be a Noetherian sober topological space. Let $E \subset X$ be a subset of $X$.
    \begin{enumerate}
        \item If $E$ is constructible and stable under specialization, then $E$ is closed. 
        \item If $E$ is constructible and stable under generalization, then $E$ is open.
    \end{enumerate}
\end{lemma}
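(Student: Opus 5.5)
The statement is purely topological, so I would prove it directly from the definitions of constructible sets in a Noetherian topological space, using the standard characterization: in a Noetherian space $X$, a subset $E$ is constructible if and only if for every irreducible closed subset $Z \subseteq X$, either $E \cap Z$ or $Z \setminus E$ contains a nonempty open subset of $Z$. Part (2) follows from part (1) by passing to the complement $X \setminus E$, which is constructible and stable under specialization exactly when $E$ is constructible and stable under generalization. So the work is in (1).

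For (1), I would show $\overline{E} \subseteq E$. Since $X$ is Noetherian, $\overline{E}$ has finitely many irreducible components $Z_1, \ldots, Z_m$. Sobriety gives each $Z_j$ a unique generic point $\eta_j$. The key step is to show $\eta_j \in E$. Write $E$ as a finite union of locally closed sets $U_k \cap F_k$. Then $Z_j \subseteq \overline{E} = \bigcup_k \overline{U_k \cap F_k}$, so by irreducibility $Z_j \subseteq \overline{U_k \cap F_k}$ for some $k$. Since $Z_j$ is a component of $\overline{E}$, we get $Z_j = \overline{U_k \cap F_k}$ for that $k$. Now $U_k \cap F_k$ is open and dense in its closure $Z_j$. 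It is nonempty since $Z_j \neq \emptyset$, and a nonempty open subset of an irreducible sober space contains the generic point. Hence $\eta_j \in U_k \cap F_k \subseteq E$.

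Once $\eta_j \in E$, stability under specialization gives $Z_j = \overline{\{\eta_j\}} \subseteq E$. Therefore $\overline{E} = \bigcup_j Z_j \subseteq E$, so $E$ is closed.

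The main point to handle carefully is the identification $Z_j = \overline{U_k \cap F_k}$ and the fact that a locally closed set is open in its closure. Concretely, $U_k \cap F_k = U_k \cap \overline{U_k \cap F_k}$ because $\overline{U_k \cap F_k} \subseteq F_k$. Everything else is formal. If $E$ is empty the statement is trivial.

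For (2), if $E$ is constructible and stable under generalization, then $X \setminus E$ is constructible, since constructible sets form a Boolean algebra, and it is stable under specialization. By (1), $X \setminus E$ is closed, so $E$ is open.
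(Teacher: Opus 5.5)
The paper gives no argument of its own here; it only cites the Stacks Project. Your overall plan is sound: put the generic point of each irreducible component of $\overline{E}$ into $E$, use stability under specialization, and get (2) by passing to complements. However, the step that puts $\eta_j$ into $E$ rests on a false inference. From $Z_j \subseteq \overline{U_k \cap F_k} \subseteq \overline{E}$ you conclude $Z_j = \overline{U_k \cap F_k}$ because $Z_j$ is a component of $\overline{E}$. Maximality of $Z_j$ only rules out strictly larger \emph{irreducible} closed subsets of $\overline{E}$, and $\overline{U_k \cap F_k}$ need not be irreducible. For example, take $X = \Spec(k[x,y]/(xy))$ for a field $k$, and $E = X = U_1 \cap F_1$ with $U_1 = F_1 = X$. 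Then $\overline{E} = X$ has two irreducible components, and neither equals $\overline{U_1 \cap F_1} = X$. So the claim that $U_k \cap F_k$ is a dense open subset of $Z_j$ is not justified, and that claim is exactly what you use to get $\eta_j \in U_k \cap F_k$. You flag this identification as the delicate point, but what you then verify is only that a locally closed set is open in its closure.

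The repair is short. Replace each $F_k$ by its finitely many irreducible components $F_{k,1}, \ldots, F_{k,s_k}$, so that $E = \bigcup_{k,l} (U_k \cap F_{k,l})$, and discard the empty pieces. Each remaining piece is a nonempty open subset of the irreducible closed set $F_{k,l}$, hence dense in it. Therefore $\overline{E} = \bigcup_{k,l} F_{k,l}$ with every $F_{k,l}$ irreducible. Then $Z_j \subseteq F_{k,l} \subseteq \overline{E}$ for some $(k,l)$, and maximality now does give $Z_j = F_{k,l}$. So $U_k \cap F_{k,l}$ is a nonempty open subset of $Z_j$ and contains $\eta_j$; the rest of your argument goes through verbatim.

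Alternatively, you can use the characterization of constructible sets that you state at the outset but never invoke. Write $\overline{E} = \bigcup_i \overline{E \cap Z_i}$ with $\overline{E \cap Z_i} \subseteq Z_i$. Irreducibility of $Z_j$ and maximality of the components force $\overline{E \cap Z_j} = Z_j$. Hence $Z_j \setminus E$ cannot contain a nonempty open subset of $Z_j$. By the characterization, $E \cap Z_j$ then contains such a subset, and therefore contains $\eta_j$.
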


\begin{corollary}\label{cor:exopen2}
    Let $A$ be an excellent ring. Then, $\mathsf{CI}_{\leq c}(A)$ is an open subset of $\Spec(A)$ for any $c \geq 0$.
\end{corollary}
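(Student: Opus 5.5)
The plan is to write $\mathsf{CI}_{\leq c}(A)$ as the intersection of two subsets of $X=\Spec(A)$, show it is constructible and stable under generalization, and then apply Lemma \ref{lem:construct} (2). Since $A$ is Noetherian, $X$ is a Noetherian sober topological space, and it is quasi-compact. Since $A$ is excellent, $X$ is a quasi-compact excellent scheme.

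First, by definition,
\begin{align*}
    \mathsf{CI}_{\leq c}(A) = \mathsf{CI}(A) \cap C^{-1}(\NN_{\leq c}),
\end{align*}
where $C$ is the codimension function. The set $\mathsf{CI}(A)$ is open in $X$. This holds because $\mathsf{CI}$ satisfies \NC{} (Remark \ref{rem:NC}), regularity implies $\mathsf{CI}$, and an excellent ring satisfies $\mathsf{Reg}$-Q0, so Proposition \ref{prop:NCandExcellent} applies. (Alternatively one can cite \cite[Corollary 3.3]{GM78} directly.) In particular, $\mathsf{CI}(A)$ is constructible. By Theorem \ref{th:constructible}, $C^{-1}(\NN_{\leq c})$ is constructible in $X$. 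A finite intersection of constructible sets is constructible, so $\mathsf{CI}_{\leq c}(A)$ is constructible.

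Second, $\mathsf{CI}_{\leq c}(A)$ is stable under generalization. Indeed, $\mathsf{CI}_{\leq c}$ satisfies (A-1) by Proposition \ref{prop:nagata_hs}, citing \cite{Put21}: if $\mathfrak{q}\subseteq\mathfrak{p}$ and $A_{\mathfrak{p}}$ is $\mathsf{CI}_{\leq c}$, then $A_{\mathfrak{q}}=(A_{\mathfrak{p}})_{\mathfrak{q}A_{\mathfrak{p}}}$ is $\mathsf{CI}_{\leq c}$. Note that $C^{-1}(\NN_{\leq c})$ alone is not stable under generalization, by the example preceding Theorem \ref{th:constructible}. For this reason we must intersect with $\mathsf{CI}(A)$ first rather than work with $C$ alone.

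Lemma \ref{lem:construct} (2) then gives that $\mathsf{CI}_{\leq c}(A)$ is open. There is no real obstacle at this point. The only substantive inputs are the openness of the $\mathsf{CI}$-locus on excellent rings and the generalization stability of $\mathsf{CI}_{\leq c}$, and both are already available earlier in the paper.
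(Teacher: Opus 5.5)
Your proposal is correct and follows the paper's proof essentially verbatim. Both write $\mathsf{CI}_{\leq c}(A)=\mathsf{CI}(A)\cap C^{-1}(\NN_{\leq c})$, obtain openness of $\mathsf{CI}(A)$ from Proposition \ref{prop:NCandExcellent} and constructibility of the second set from Theorem \ref{th:constructible}, and conclude via stability under generalization and Lemma \ref{lem:construct} (2); you also make explicit a few points the paper leaves tacit (that $\Spec(A)$ is a quasi-compact excellent scheme and a Noetherian sober space, that open sets there are constructible, and that generalization-stability comes from \cite{Put21}).
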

\begin{proof}
    Let $A$ be an excellent ring. By definition, we have
    \begin{align*}
        \mathsf{CI}_{\leq c}(A) = \mathsf{CI}(A) \cap \{\mathfrak{p} \in \Spec(A) \mid \embdim(A_{\mathfrak{p}}) - \dim(A_{\mathfrak{p}}) \leq c\}.
    \end{align*}
    By Proposition \ref{prop:NCandExcellent} and the fact that $\mathsf{CI}$ satisfies \NC{}, we have that $\mathsf{CI}(A)$ is an open subset of $\Spec(A)$. 

    The set $\{\mathfrak{p} \in \Spec(A) \mid \embdim(A_{\mathfrak{p}}) - \dim(A_{\mathfrak{p}}) \leq c\}$ is a constructible subset of $\Spec(A)$ by Theorem \ref{th:constructible}. Therefore, $\mathsf{CI}_{\leq c}(A)$ is a constructible subset and is stable under generalization. Hence, by Lemma \ref{lem:construct}, $\mathsf{CI}_{\leq c}(A)$ is an open subset of $\Spec(A)$.
\end{proof}

\section{Open questions}\label{sec:openq}

First, we summarize the results of this paper in Table \ref{tab:summary}. Let $c$ be a positive integer.

\begin{table}[H]
    \centering
    \caption{Summary of the properties considered in this paper}\label{tab:summary}
    \small
    \begin{tabular}{
        l
        *{5}{>{\raggedright\arraybackslash}p{1.7cm}}
    }
        \toprule
        $\text{Property}^{(\ref{def:main_sing},\ \ref{def:cicodim})}$ & $\mathsf{Reg}$ & $\mathsf{CI}_{\leq c}$ & $\mathsf{CI}$ & $\mathsf{Gor}$ & $\mathsf{CM}$ \\
        \midrule
        $\text{(A-1)}^{(\ref{def:conditions-for-NC})}$ & $\checkmark$ & $\checkmark^{(\ref{prop:nagata_hs})}$ & $\checkmark$ & $\checkmark$ & $\checkmark$ \\
        $\text{(A-2)}^{(\ref{def:conditions-for-NC})}$ & $\checkmark$ & $\checkmark^{(\ref{prop:nagata_hs})}$ & $\checkmark$ & $\checkmark$ & $\checkmark$ \\
        $\text{(A-3)}^{(\ref{def:conditions-for-NC})}$ & $\checkmark^{(\ref{prop:known_singularity_NC})}$ & $\times^{(\ref{th:counterexampleNC})}$ & $\checkmark^{(\ref{prop:known_singularity_NC})}$ & $\checkmark^{(\ref{prop:known_singularity_NC})}$ & $\checkmark^{(\ref{prop:known_singularity_NC})}$ \\
        $\text{\NC{}}^{(\ref{def:Nagata_criterion})}$ & $\checkmark$ & $\times^{(\ref{th:counterexampleNC})}$ & $\checkmark$ & $\checkmark$ & $\checkmark$ \\
        $\text{\QC{}}^{(\ref{def:quotient_condition})}$ & $\times^{(\ref{rem:regular_QC})}$ & $?^{(\ref{q:QC_cifin})}$ & $\checkmark^{(\ref{rem:regular_QC})}$ & $\checkmark^{(\ref{rem:regular_QC})}$ & $\checkmark^{(\ref{rem:regular_QC})}$ \\
        Polynomial extensions & $\checkmark$ & $\checkmark^{(\ref{prop:polyext})}$ & $\checkmark$ & $\checkmark$ & $\checkmark$ \\
        Openness for $\mathsf{Reg}$-Q0 rings & $\checkmark $ & $\checkmark^{(\ref{th:openness_hs})}$ & $\checkmark $ & $\checkmark $ & $\checkmark $ \\
        \bottomrule
    \end{tabular}

    \medskip

    \begin{minipage}{0.92\textwidth}
        \footnotesize Here, $\checkmark$ means that the property holds, $\times$ means that it does not hold in general, and $?$ means that it is an open question. Superscript numbers refer to the corresponding results in this paper. Checkmarks without superscript numbers are well-known or immediate from other results.
    \end{minipage}
\end{table}

This leads to the following open question.

\begin{question}\label{q:QC_cifin}
    Does $\mathsf{CI}_{\leq c}$ satisfy \QC{} in general for any $c \geq 1$?
\end{question}

Valabrega noted in \cite{Val78} that it was not known whether regularity satisfies \QC{} in characteristic $0$. More explicitly, let $A$ be a regular ring (not necessarily local) of characteristic $0$. Then, does $\mathsf{Reg}(A/\mathfrak{p})$ contain a nonempty open subset of $\Spec(A/\mathfrak{p})$ for every $\mathfrak{p} \in \Spec(A)$? To the best of the author’s knowledge, this question remains open.

We proceed to the next question. In this paper, we have mainly discussed $\mathsf{CI}_{\leq c}$. There are several other classes of singularities that one may consider. 

\begin{definition}\label{def:other_sing}
    Let $(A, \mathfrak{m}, k)$ be a Noetherian local ring. We say that $A$ is
    \begin{enumerate}
        \item nearly Gorenstein ($\mathsf{nGor}$) if $A$ is Cohen--Macaulay and has a canonical module $\omega_A$ such that $\mathfrak{m} \subset \tr_A(\omega_A)$, where $\tr_A(\omega_A)$ is the trace ideal of $\omega_A$,
        \item canonical trace radical ($\mathsf{CTR}$) if $A$ is Cohen--Macaulay and has a canonical module $\omega_A$ such that $\tr_A(\omega_A)$ is a radical ideal, where $\tr_A(\omega_A)$ is the trace ideal of $\omega_A$,
        \item Cohen--Macaulay with canonical module ($\mathsf{CMC}$) if $A$ is Cohen--Macaulay and has a canonical module $\omega_A$. 
    \end{enumerate}
\end{definition}

Nearly Gorenstein rings were introduced in \cite[Definition 2.2]{HHS19}, and canonical trace radical rings were introduced in \cite[Definition 3.2]{Miy26}. These classes satisfy the following hierarchy: 
\begin{align*}
    \mathsf{Reg} \Rightarrow \mathsf{CI}_{\leq c} \Rightarrow \mathsf{CI} \Rightarrow \mathsf{Gor} \Rightarrow \mathsf{nGor} \Rightarrow \mathsf{CTR} \Rightarrow \mathsf{CMC} \Rightarrow \mathsf{CM}.
\end{align*}

This leads to the following natural question. 

\begin{question}\label{q:other_sing}
    Do $\mathsf{nGor}$, $\mathsf{CTR}$, and $\mathsf{CMC}$ satisfy \NC{} or \QC{}?
\end{question}

We will need to study the behavior of canonical modules for these singularities to answer Question \ref{q:other_sing}. We leave this for future work.

\bibliographystyle{amsalpha-inits}
\bibliography{references}

\providecommand{\bysame}{\leavevmode\hbox to3em{\hrulefill}\thinspace}
\providecommand{\MR}{\relax\ifhmode\unskip\space\fi MR }
\providecommand{\MRhref}[2]{%
  \href{http://www.ams.org/mathscinet-getitem?mr=#1}{#2}
}
\providecommand{\href}[2]{#2}
\begin{thebibliography}{NSW15}

\bibitem[AM69]{AM69}
M.~F. Atiyah and I.~G. Macdonald, \emph{Introduction to commutative algebra}, Addison-Wesley Publishing Co., Reading, MA, 1969. \MR{242802}

\bibitem[ATR89]{ATR89}
L.~Alonso~Tarrio and A.~G. Rodicio, \emph{On the upper semicontinuity intersection defect}, Math. Scand. \textbf{65} (1989), no.~2, 161--164. \MR{1050861}

\bibitem[Avr77]{Avr77}
L.~L. Avramov, \emph{Homology of local flat extensions and complete intersection defects}, Math. Ann. \textbf{228} (1977), no.~1, 27--37. \MR{485836}

\bibitem[Avr98]{Avr98}
\bysame, \emph{Infinite free resolutions}, Six lectures on commutative algebra ({Bellaterra}, 1996), Progress in Mathematics, vol. 166, Birkh{\"a}user, Basel, 1998, pp.~1--118. \MR{1648664}

\bibitem[GL69]{GL69}
T.~H. Gulliksen and G.~Levin, \emph{Homology of local rings}, Queen's Papers in Pure and Applied Mathematics, vol.~20, Queen's University, Kingston, ON, 1969. \MR{262227}

\bibitem[GM78]{GM78}
S.~Greco and M.~G. Marinari, \emph{{Nagata}'s criterion and openness of loci for {Gorenstein} and complete intersection}, Math. Z. \textbf{160} (1978), no.~3, 207--216. \MR{491741}

\bibitem[Gro65]{Gro65}
A.~Grothendieck, \emph{{\'E}l{\'e}ments de g{\'e}om{\'e}trie alg{\'e}brique {IV}: {\'E}tude locale des sch{\'e}mas et des morphismes de sch{\'e}mas, seconde partie}, Publ. Math. Inst. Hautes {\'E}tudes Sci. \textbf{24} (1965), 5--231.

\bibitem[HHS19]{HHS19}
J.~Herzog, T.~Hibi, and D.~I. Stamate, \emph{The trace of the canonical module}, Israel J. Math. \textbf{233} (2019), no.~1, 133--165. \MR{4013970}

\bibitem[Hir64]{Hir64}
H.~Hironaka, \emph{Resolution of singularities of an algebraic variety over a field of characteristic zero. {I}, {II}}, Ann. of Math. (2) \textbf{79} (1964), 109--203; 205--326. \MR{199184}

\bibitem[Kan91]{Kan91}
T.~Kanzo, \emph{{Nagata} criterion for {Buchsbaum} loci}, Math. J. Toyama Univ. \textbf{14} (1991), 177--184. \MR{1145153}

\bibitem[Kim23]{Kim23}
K.~Kimura, \emph{Openness of various loci over {Noetherian} rings}, J. Algebra \textbf{633} (2023), 403--424.

\bibitem[Mat86]{Mat86}
H.~Matsumura, \emph{Commutative ring theory}, Cambridge Studies in Advanced Mathematics, vol.~8, Cambridge University Press, Cambridge, 1986. \MR{879273}

\bibitem[Miy26]{Miy26}
M.~Miyazaki, \emph{Radical property of the traces of the canonical modules of {Cohen--Macaulay} rings}, J. Math. Soc. Japan (2026), 1--24, Advance publication.

\bibitem[MV77]{MV77}
C.~Massaza and P.~Valabrega, \emph{Sull'apertura di luoghi in uno schema localmente noetheriano}, Boll. Un. Mat. Ital. A (5) \textbf{14} (1977), no.~3, 564--574. \MR{480501}

\bibitem[Nag59]{Nag59}
M.~Nagata, \emph{On the closedness of singular loci}, Publ. Math. Inst. Hautes {\'E}tudes Sci. \textbf{2} (1959), 5--12. \MR{106908}

\bibitem[Nis12]{Nis12}
J.~Nishimura, \emph{A few examples of local rings, {I}}, Kyoto J. Math. \textbf{52} (2012), no.~1, 51--87. \MR{2892767}

\bibitem[NSW15]{NSW15}
S.~Nasseh and S.~Sather-Wagstaff, \emph{Cohen factorizations: weak functoriality and applications}, J. Pure Appl. Algebra \textbf{219} (2015), no.~3, 622--645. \MR{3279378}

\bibitem[Put21]{Put21}
T.~J. Puthenpurakal, \emph{Localization of complete intersections}, Comm. Algebra \textbf{49} (2021), no.~10, 4543--4545. \MR{4296856}

\bibitem[Rag80]{Rag80}
A.~Ragusa, \emph{On openness of {$H_n$}-locus and semicontinuity of {$n$}th deviation}, Proc. Amer. Math. Soc. \textbf{80} (1980), no.~2, 201--209. \MR{577744}

\bibitem[Sta26]{stacks-project}
{The Stacks Project Authors}, \emph{The {Stacks Project}}, \url{https://stacks.math.columbia.edu}, 2026.

\bibitem[Tak99]{Tak99}
R.~Takahashi, \emph{{Nagata} criterion for {Serre}'s {$(R_n)$} and {$(S_n)$}-conditions}, Math. J. Okayama Univ. \textbf{41} (1999), no.~1, 37--43. \MR{1816615}

\bibitem[Val78]{Val78}
P.~Valabrega, \emph{Formal fibers and openness of loci}, J. Math. Kyoto Univ. \textbf{18} (1978), no.~1, 199--208. \MR{485865}

\end{thebibliography}

\end{document}